\theoremstyle{plain}
\newtheorem{theorem}                {Theorem}      [section]
\newtheorem{proposition}  [theorem]  {Proposition}
\newtheorem{corollary}    [theorem]  {Corollary}
\newtheorem{lemma}        [theorem]  {Lemma}
\theoremstyle{definition}
\newtheorem{example}      [theorem]  {Example}
\newtheorem{remark}       [theorem]  {Remark}
\newtheorem{definition}   [theorem]  {Definition}
\numberwithin{equation}{section}
\def \R{{\mathbb R}}
\def \s {{\mathbb S}}
\def \h {{\mathbb H}}
\def \link {~}
\def \1 {\`}
\title[Polyharmonic curves]{Polyharmonic curves in semi-Riemannian manifolds}
\author{S.~Montaldo}
\address{Universit\`a degli Studi di Cagliari\\
Dipartimento di Matematica e Informatica\\
Via Ospedale 72\\
09124 Cagliari, Italia}
\email{montaldo@unica.it}
\author{A.~Ratto}
\address{Universit\`a degli Studi di Cagliari\\
Dipartimento di Matematica e Informatica\\
Via Ospedale 72\\
09124 Cagliari, Italia}
\email{rattoa@unica.it}
\author{A.~Sanna}
\address{Universit\`a degli Studi di Cagliari\\
Dipartimento di Matematica e Informatica\\
Via Ospedale 72\\
09124 Cagliari, Italia}
\email{antonio.sanna4@unica.it}
\begin{document}
\begin{abstract}
Let $(M^m_t,g)$ be a semi-Riemannian manifold of dimension
$m$ with a non-degenerate metric of \textit{index} $t$, $m\geq 2$, $1 \leq t \leq m-1$. The main aim of this paper is to investigate the existence of Frenet curves in $(M^m_t,g)$ which are polyharmonic of order $r$, shortly, $r$-harmonic. We shall focus primarily on the cases that the ambient space is a semi-Riemannian space form $N^m_t(c)$ of sectional curvature $c$, a ruled Lorentzian surface or a suitable, possibly warped, product space. 
We shall obtain existence, non-existence and classification results. 
\end{abstract}

\subjclass[2010]{Primary: 58E20. Secondary: 53C50.}

\keywords{Polyharmonic curves, Frenet curves, helices, semi-Riemannian space forms, Lorentzian surfaces, Robertson-Walker space time}

\thanks{The authors are members of the Italian National Group G.N.S.A.G.A. of INdAM. The work was partially supported by the Project {ISI-HOMOS} funded by Fondazione di Sardegna and by BC\_TRITECH - Bando a cascata (PE00000015 - B83C22004800006).  The author A. S. was supported by a NRRP scholarship -  funded by the European Union - NextGenerationEU - Mission 4, Component 1, 
Investment 3.4.}
\maketitle
\section{Introduction and statement of the results}\label{Sec-Intro}

Let $(M^m_t,g)$ be an $m$-dimensional semi-Riemannian (or, pseudo-Riemannian) manifold, where $g$ is a non-degenerate metric of index $t$, $1\leq t \leq m-1$. If $\gamma: I\to M^m_t$ is a smooth curve defined on an open real interval $I$, then we say that $\gamma$ has one of the three causal characters, i.e., \textit{space-like}, \textit{light-like} (\textit{null}) or \textit{time-like} if $g\left (\gamma',\gamma'\right ) >0$, $g\left (\gamma',\gamma'\right )=0$ or $g\left (\gamma',\gamma'\right )<0$ respectively on $I$.

In this paper we shall study curves $\gamma$ which are either space-like or time-like. Then, to avoid trivialities arising from re-parametrization, we shall always assume that $\gamma$ is parametrized with respect to the arc length $s$, i.e.,
\[
\langle \gamma'(s),\gamma' (s)\rangle=g(\gamma'(s),\gamma' (s))=\epsilon_1\,,
\]
where $\epsilon_1=1$ if $\gamma$ is space-like and $\epsilon_1=-1$ if $\gamma$ is time-like. 
From now on, we shall use the notation $T=\gamma'(s)$. 

The main topic of this paper is the study of polyharmonic curves of order $r$, shortly \textit{$r$-harmonic curves}, $r \geq 2$. 
We recall that $\gamma$ is $r$-harmonic if and only if its $r$-tension field $\tau_r(\gamma)$ vanishes. This condition, up to an irrelevant sign $(-1)^{r-1}$, is expressed by (see \cite{MR4542687,MR2869168,MR4308322, MR1, MR2})
\begin{equation}\label{r-harmonicity-curves}
\tau_r(\gamma)=\nabla^{2r-1}_T T+ \sum_{\ell=0}^{r-2}(-1)^\ell R\left (\nabla^{2r-3-\ell}_T T ,\nabla^{\ell}_T T\right )T=0 \,,
\end{equation}
where $\nabla^{0}_T T=T,\,\nabla^{k}_T T=\nabla_T \left ( \nabla^{k-1}_T T\right ) $ and $R$ is the semi-Riemannian curvature operator of the ambient space.

We point out that any geodesic is trivially $r$-harmonic for all $r \geq 1$. Therefore, we say that $\gamma$ is a \textit{proper} $r$-harmonic curve if it is $r$-harmonic and \textit{not} a geodesic.

Several papers on the biharmonic case in this context can be found in the literature. By way of example, we cite \cite{MR1609044,Du-Zhang, MR4142001,MR4393014,MR1990566,MR2251644,MR3879834,MR4160739}. More precisely, the classification of proper biharmonic curves in the flat $3$-dimensional Lorentz-Minkowski space was achieved in \cite{Du-Zhang,MR4142001}. Other related results for biharmonic curves in non flat Lorentz space forms were obtained by Sasahara in \cite{Sasahara}. Further complementary results were proved in \cite{MR3879834,MR4160739} in the special case that the ambient space is the $3$-dimensional Minkowski space. 

An interesting feature of these results is the fact that, differently from the Riemannian case, proper biharmonic curves do exist also in the \textit{flat} Minkowski space.

By contrast, in the literature we found no paper dealing with $r$-harmonic curves in a semi-Riemannian manifold when $r \geq3$. 

Thus, our aim is to start the investigation of this open case. 

We shall restrict our attention to the the study of the so-called \textit{$n$-Frenet curves}. We shall follow the approach of \cite{Song, Ziplar}, where the authors focus on the study of helices of order $n\geq 3$.
\begin{definition}[\textbf{Frenet curves (helices) of order $n$}]\label{def-helices-order-n} Let $(M^m_t,\langle, \rangle)$ be an $m$-dimensional semi-Riemannian manifold of index $t$, $1 \leq t \leq m-1$. Let $\gamma:I \to M^m_t$ be a \textit{nonnull} curve parametrized by the arc length $s$. We say that $\gamma$ is a \textit{Frenet curve of order $n$}, $n \leq m$, if it admits a \textit{Frenet frame field} $\left \{F_1,\ldots,F_n \right \}$ along $\gamma$ which verifies:
\begin{equation}\label{Frenet-field-general-pseudo}
\begin{cases}
\nabla_T F_1= \epsilon_2 \,k_1\,F_2 \\
\nabla_T F_i=-\epsilon_{i-1}\,k_{i-1} F_{i-1}+\,\epsilon_{i+1}\,k_i F_{i+1} \qquad (1<i<n)\\
\nabla_T F_n=-\epsilon_{n-1}\,k_{n-1} F_{n-1} \,, 
\end{cases} 
\end{equation}
where $F_1=T$ and 
\[
\epsilon_{j}=\langle F_j,F_j \rangle(=\pm 1) \quad (1\leq j \leq n)\quad {\rm and}\quad  \quad \langle F_i,F_j \rangle=0 \quad (i\neq j) \,,
\]
and the functions $k_1(s), \ldots, k_{n-1}(s)$ are positive. The function $k_i(s)$ is called the $i$-th \textit{curvature} of the curve.

In the special case that $M^m_t$ is oriented and $n=m\geq 2$ we allow that $k_{n-1}(s)$ can assume any real value because in this case $F_n$ can be defined by requiring that $\{F_1,\ldots,F_n \}$ is a positively oriented orthonormal base of $T_{\gamma(s)}M^m_t$. Also, we say that an $n$-Frenet curve $\gamma$ is \textit{full} if $k_{n-1}(s)$ never vanishes.
Finally, we say that an $n$-Frenet curve is a \textit{helix} if its curvatures $k_1(s), \ldots, k_{n-1}(s)$ are constant functions which will be denoted $\kappa_1, \ldots, \kappa_{n-1}$.
\end{definition}

\begin{remark}\label{remark-not-skew}
System~\ref{Frenet-field-general-pseudo} can be written in matrix form as follows:

\[
\begin{bmatrix}
\nabla_T F_1\\
\nabla_T F_2\\
\vdots\\
\nabla_T F_n
\end{bmatrix}
=
\Omega
\begin{bmatrix}
 F_1\\
 F_2\\
\vdots\\
 F_n
\end{bmatrix}
\]

where
{\small
\[
\Omega=\begin{bmatrix}
0&\epsilon_2 k_1&0&0&0&\cdots&0&0&0\\
-\epsilon_1 k_1&0&\epsilon_3 k_2&0&0&\cdots&0&0&0\\
0&-\epsilon_2 k_2&0&\epsilon_4 k_3&0&\cdots&0&0&0\\
\vdots&\vdots&\vdots&\vdots&\vdots&\vdots&\vdots&\vdots&\vdots\\
0&0&0&0&\cdots&0&-\epsilon_{n-2} k_{n-2}&0&\epsilon_{n} k_{n-1}\\
0&0&0&0&\cdots&0&0&-\epsilon_{n-1} k_{n-1}&0\\
\end{bmatrix}
\]
}\vspace{2mm}

We point out that the matrix $\Omega$ is skew-symmetric if and only if
\[
\epsilon_1=\epsilon_2=\cdots=\epsilon_n
\]
which essentially corresponds to the Riemannian case.
\end{remark}

\begin{remark} In the notation of Definition~\ref{def-helices-order-n}, we must have $0\leq \#\{\epsilon_j \colon \epsilon_j=-1 \}\leq t$ and $0\leq \#\{\epsilon_j \colon \epsilon_j=1 \}\leq (m-t)$.
\end{remark}

First, let us discuss $2$-Frenet curves. In this context, we simply write $k(s)$ to denote the first and unique curvature $k_1(s)$. For biharmonic curves ($r=2$) into a $2$-dimensional Lorentzian space form $N^2_1(c)$ we have the following comprehensive and well-known characterization (see, for example, \cite{Sasahara}).
\begin{theorem}\label{Cor-biharm-2-dim-space-form}
Let $\gamma$ be a space-like proper biharmonic curve parametrized by the arc length $s$ into a $2$-dimensional Lorentzian space form $N^2_1(c)$. Then $k(s)$ is a constant, say $k(s)=\kappa>0$. Moreover, biharmonicity is equivalent to
\begin{equation}\label{biharm-cond-2-dim-space-form}
\kappa^2 + c =0\,.
\end{equation}
In particular, if $c\geq0$, then there exists no space-like proper biharmonic curve in $N^2_1(c)$.
\end{theorem}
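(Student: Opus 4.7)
The plan is to make the biharmonic equation \eqref{r-harmonicity-curves} completely explicit in terms of the curvature $k(s)$ by using the Frenet equations, and then to split the resulting vector equation into its $F_1$ and $F_2$ components. First I would set up the Frenet data: since $\gamma$ is space-like we have $\epsilon_1=1$, and because $N^2_1(c)$ has index $1$ the unit normal $F_2$ must be time-like, so $\epsilon_2=-1$. The Frenet system \eqref{Frenet-field-general-pseudo} with $n=2$ therefore reduces to
\begin{equation*}
\nabla_T F_1=-k\,F_2, \qquad \nabla_T F_2=-k\,F_1.
\end{equation*}

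Next I would iterate these formulas to compute $\nabla_T T=-kF_2$, $\nabla_T^2T=k^2 F_1-k'F_2$, and $\nabla_T^3 T=3kk'\,F_1-(k''+k^3)\,F_2$. For the curvature term in \eqref{r-harmonicity-curves} (taken at $r=2$, where the sum has only the $\ell=0$ summand), I would use that in a semi-Riemannian space form of constant sectional curvature $c$ the curvature operator is $R(X,Y)Z=c\bigl(\langle Y,Z\rangle X-\langle X,Z\rangle Y\bigr)$. Using $\langle T,T\rangle=1$ and $\langle \nabla_T T,T\rangle=0$, this gives $R(\nabla_T T,T)T=c\,\nabla_T T=-ck\,F_2$. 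Plugging everything into $\tau_2(\gamma)=\nabla_T^3 T+R(\nabla_T T,T)T=0$ yields
\begin{equation*}
3kk'\,F_1-\bigl(k''+k^3+ck\bigr)F_2=0.
\end{equation*}

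Since $\{F_1,F_2\}$ is a basis along $\gamma$, each component must vanish. The $F_1$-component gives $3kk'=0$; as $\gamma$ is proper biharmonic it is not a geodesic, so $k>0$, which forces $k'\equiv 0$ and hence $k(s)\equiv\kappa$ for some constant $\kappa>0$. The $F_2$-component then becomes $\kappa(\kappa^2+c)=0$, i.e.\ $\kappa^2+c=0$, which is \eqref{biharm-cond-2-dim-space-form}. The final statement follows at once: $\kappa^2=-c>0$ requires $c<0$, so when $c\geq 0$ no proper biharmonic space-like curve exists.

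The computation itself is routine; the only point that requires genuine care is the sign bookkeeping forced by the Lorentzian structure, namely the fact that $\epsilon_2=-1$ so the matrix $\Omega$ of Remark~\ref{remark-not-skew} is not skew-symmetric. Once the Frenet relations are written with these signs, the rest of the argument is a direct substitution, and no ODE analysis beyond separating the two components is needed.
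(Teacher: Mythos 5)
Your computation is correct and follows essentially the same route as the paper: the authors first derive the general bitension formula \eqref{bitension-2-dim-surface} for a non-degenerate curve in a semi-Riemannian surface and then specialize to $K_M\equiv c$, which with $\epsilon_1=1$, $\epsilon_2=-1$ gives exactly your components $3kk'$ along $T$ and $-(k''+k^3+ck)$ along $N$. Your sign bookkeeping ($\epsilon_2=-1$ forced by the index, hence the non-skew Frenet matrix) and the ensuing conclusion match the paper's argument.
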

In the general case of $r$-harmonic curves ($r>3$) it is not automatic that the curvature $k(s)$ is constant. Thus, as a first, preliminary result, we shall prove:
\begin{theorem}\label{Prop-r-harm-2-dim-space-form}
Let $r \geq 2$. Let $\gamma$ be a space-like curve parametrized by the arc length $s$ into a $2$-dimensional Lorentzian space form $N^2_1(c)$. Let us assume that $\gamma$ has constant, non-zero curvature $k(s)$, say $k(s)=\kappa>0$. Then $\gamma$ is proper $r$-harmonic if and only if
\begin{equation}\label{r-harm-cond-2-dim-space-form}
\kappa^2 + (r-1)c =0\,.
\end{equation}
In particular, there exists no space-like proper $r$-harmonic curve when $c \geq 0$. 
\end{theorem}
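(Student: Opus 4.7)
The plan is to substitute the Frenet equations directly into formula \eqref{r-harmonicity-curves} and exploit the explicit form of the curvature tensor of a space form. Since $\gamma$ is space-like with $n=m=2$ in a Lorentzian surface, one has $\epsilon_1=1$ and $\epsilon_2=-1$, so \eqref{Frenet-field-general-pseudo} reduces to
\begin{equation*}
\nabla_T T = -\kappa\,F_2, \qquad \nabla_T F_2 = -\kappa\,T.
\end{equation*}
Because $\kappa$ is constant, a straightforward induction then yields
\begin{equation*}
\nabla^{2k}_T T = \kappa^{2k}\,T, \qquad \nabla^{2k+1}_T T = -\kappa^{2k+1}\,F_2, \qquad k\geq 0,
\end{equation*}
so in particular the leading term in \eqref{r-harmonicity-curves} is $\nabla^{2r-1}_T T = -\kappa^{2r-1}\,F_2$.

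Next, I would handle the curvature sum using the standard space-form identity $R(X,Y)Z = c\bigl[\langle Y,Z\rangle X - \langle X,Z\rangle Y\bigr]$, which on the unit vectors $T$ and $F_2$ gives $R(F_2,T)T = c\,F_2$ and $R(T,F_2)T = -c\,F_2$. Since $\ell + (2r-3-\ell)=2r-3$ is odd, in every term of the sum exactly one of the two indices is even: thus one argument of $R$ is a multiple of $T$ and the other a multiple of $F_2$, with combined power $\kappa^{2r-3}$. A short case distinction on the parity of $\ell$ shows that the sign $(-1)^\ell$ exactly compensates the sign flip $R(T,F_2)=-R(F_2,T)$, so that each of the $r-1$ terms contributes the same quantity $-c\,\kappa^{2r-3}\,F_2$. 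Summing yields the total curvature contribution $-(r-1)\,c\,\kappa^{2r-3}\,F_2$.

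Finally, combining the two pieces produces
\begin{equation*}
\tau_r(\gamma) = -\kappa^{2r-3}\,\bigl[\kappa^2 + (r-1)\,c\bigr]\,F_2,
\end{equation*}
and since $\kappa>0$ and $F_2\neq 0$, the vanishing of $\tau_r(\gamma)$ is equivalent to \eqref{r-harm-cond-2-dim-space-form}. The non-existence claim is then immediate: if $c\geq 0$ then $\kappa^2+(r-1)c\geq \kappa^2>0$, contradicting \eqref{r-harm-cond-2-dim-space-form}. Note also that $\kappa>0$ forces $\nabla_T T=-\kappa F_2\neq 0$, so $\gamma$ is automatically not a geodesic and therefore $r$-harmonicity is proper.

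The main obstacle is purely the sign bookkeeping. The fact that $\epsilon_2=-1$ in the Lorentzian setting ultimately yields the \emph{plus} sign in front of $(r-1)c$, and one must verify carefully that the two parities of $\ell$ in the curvature sum produce terms with equal sign (so that they reinforce rather than cancel); this is where a naive computation could easily go astray. Apart from this, every step is an elementary induction or a direct substitution.
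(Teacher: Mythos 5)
Your proof is correct and follows essentially the same route as the paper: the authors also substitute the recursion $\nabla_T^{2\ell}T=\kappa^{2\ell}T$, $\nabla_T^{2\ell+1}T=-\kappa^{2\ell+1}N$ (their formula \eqref{formula-nablaelleTT-2-Frenet-pseudo} with $\epsilon_1=1$, $\epsilon_2=-1$) and the space-form curvature identity into \eqref{r-harmonicity-curves} to obtain $\tau_r(\gamma)=\epsilon_2\kappa^{2r-3}\bigl[\kappa^2-\epsilon_2(r-1)c\bigr]N$, which coincides with your expression. Your sign bookkeeping in the curvature sum and the properness remark are both accurate.
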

\begin{remark} It is interesting to compare Theorem~\ref{Prop-r-harm-2-dim-space-form} with the corresponding result for the Riemannian case. 
Indeed, we recall that  (see \cite{MR4542687} and \cite{MR3007953}) it was proved that the version of \eqref{r-harm-cond-2-dim-space-form} in the case of curves into a $2$-dimensional  Riemannian space form $N^2(c)$ is
\[
\kappa^2 - (r-1)c =0\,.
\]
\end{remark}

Things do change if we consider $2$-Frenet helices in a higher dimensional semi-Riemannian space form. The main reason which explains the appearance of significant differences is the fact that in this case $\epsilon_1$ and $\epsilon_2$ may have the same sign. 
Then, in this context, the corresponding version of Theorem~\ref{Prop-r-harm-2-dim-space-form} is:
\begin{theorem}\label{Prop-r-harm-m-dim-space-form}
Let $r \geq 2$. Let $\gamma$ be a $2$-Frenet helix into an $m$-dimensional semi-Riemannian space form $N^m_t(c)$, $m \geq 3$, $1 \leq t \leq m-1$. Then $\gamma$ is proper $r$-harmonic if and only if
\begin{equation}\label{r-harm-cond-2-dim-space-form-bis}
\kappa^2 -\epsilon_2 \, (r-1)c =0\,.
\end{equation}
\end{theorem}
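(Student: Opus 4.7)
The plan is to leverage the helix assumption ($\kappa$ constant) to derive a closed form for all iterated covariant derivatives $\nabla^k_T T$, and then substitute into \eqref{r-harmonicity-curves} using the constant sectional curvature formula for $R$.

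First, setting $\lambda := -\epsilon_1 \epsilon_2 \kappa^2$, the Frenet relations \eqref{Frenet-field-general-pseudo} (with $n=2$) give $\nabla_T T = \epsilon_2\kappa F_2$ and $\nabla_T F_2 = -\epsilon_1\kappa T$, whence a short induction yields
$$\nabla^{2k}_T T = \lambda^k T, \qquad \nabla^{2k+1}_T T = \lambda^k \epsilon_2 \kappa F_2 \qquad (k\geq 0).$$
In particular, $\nabla^{2r-1}_T T = \lambda^{r-1}\epsilon_2\kappa F_2$.

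Next, I would use the constant sectional curvature formula $R(X,Y)Z = c\bigl(\langle Y,Z\rangle X - \langle X,Z\rangle Y\bigr)$. Because the integers $2r-3-\ell$ and $\ell$ always have opposite parities, in each term of the sum in \eqref{r-harmonicity-curves} exactly one argument is a multiple of $T$ and the other a multiple of $F_2$; since $\langle F_2,T\rangle = 0$ and $\langle T,T\rangle = \epsilon_1$, only one of the two terms of the $R$-formula survives. A direct computation in the two parity cases produces
$$R\bigl(\nabla^{2r-3-\ell}_T T,\,\nabla^\ell_T T\bigr)T = (-1)^\ell\, \lambda^{r-2}\,\epsilon_1\epsilon_2\,\kappa\, c\, F_2.$$
The key observation is that this intrinsic sign $(-1)^\ell$ exactly cancels the factor $(-1)^\ell$ already present in \eqref{r-harmonicity-curves}, so all $r-1$ summands contribute identically.

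Putting the pieces together,
$$\tau_r(\gamma) = \epsilon_2\kappa\,\lambda^{r-2}\,\bigl[\lambda + (r-1)\,\epsilon_1\, c\bigr] F_2.$$
Since $\kappa>0$ and $\epsilon_j=\pm 1$, we have $\lambda\neq 0$, and $F_2$ is non-null. Hence $\tau_r(\gamma)=0$ reduces to $\lambda + (r-1)\epsilon_1 c = 0$, which upon substituting $\lambda = -\epsilon_1\epsilon_2\kappa^2$ and using $\epsilon_j^2=1$ is exactly \eqref{r-harm-cond-2-dim-space-form-bis}. The main obstacle is not conceptual but one of sign bookkeeping: one must verify that the $F_2$-contributions from even and odd $\ell$ combine with the correct signs so the sum collapses into a clean multiple of $r-1$, rather than producing a condition depending on the parity of $r$ or on $\epsilon_1$.
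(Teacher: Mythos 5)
Your proposal is correct and follows essentially the same route as the paper: the closed form $\nabla^{2k}_T T=\lambda^k T$, $\nabla^{2k+1}_T T=\lambda^k\epsilon_2\kappa F_2$ with $\lambda=-\epsilon_1\epsilon_2\kappa^2$ is exactly the paper's formula \eqref{formula-nablaelleTT-2-Frenet-pseudo}, and the substitution into \eqref{r-harmonicity-curves} with the space-form curvature tensor, including the cancellation of the two $(-1)^\ell$ signs, reproduces the paper's computation $\tau_r(\gamma)=\epsilon_2\kappa\lambda^{r-2}\bigl[\lambda+(r-1)\epsilon_1 c\bigr]F_2$. The sign bookkeeping checks out and the conclusion is equivalent to \eqref{r-harm-cond-2-dim-space-form-bis}.
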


The case of triharmonic curves appears to be special. Indeed, unlike the case where $r>3$, $3$-harmonicity forces $k(s)$ to be a constant. More precisely, we prove:
\begin{theorem}\label{Prop-3-harmon+2-Frenet-implica-k-costante} Let $\gamma$ be a triharmonic $2$-Frenet curve into an $m$-dimensional semi-Riemannian space form $N^m_t(c)$, $m \geq 2$, $1 \leq t \leq m-1$. Then $k(s)$ is a constant function.
\end{theorem}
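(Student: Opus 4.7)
The plan is to combine the triharmonic equation~\eqref{r-harmonicity-curves} (with $r=3$) and the Frenet equations~\eqref{Frenet-field-general-pseudo} (with $n=2$) to derive two scalar ODEs for $k(s)$, and then to show that these are incompatible unless $k' \equiv 0$. Writing $a := \epsilon_1 \epsilon_2 \in \{\pm 1\}$, I would iteratively compute $\nabla_T^j T$ for $j = 1, \ldots, 5$; each is an explicit combination of $F_1$ and $F_2$ with coefficients polynomial in $k, k', \ldots, k^{(j-1)}$ and $a$. Using the space-form identity $R(X,Y)Z = c(\langle Y,Z\rangle X - \langle X,Z\rangle Y)$, one checks that $R(\nabla_T^3 T, T)T - R(\nabla_T^2 T, \nabla_T T)T = c(ak''-2k^3)F_2$ lies entirely along $F_2$. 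Projecting $\tau_3(\gamma) = 0$ onto $\{F_1, F_2\}$ and using $\epsilon_2 a = \epsilon_1$ then yields the two scalar equations
\[
-10 a k' k'' - 5 a k k''' + 10 k^3 k' = 0
\]
(the $F_1$-component) and
\[
k'''' - 15 a k (k')^2 - 10 a k^2 k'' + k^5 + \epsilon_1 c\, k'' - 2 \epsilon_2 c\, k^3 = 0
\]
(the $F_2$-component).

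The key observation is that the $F_1$-component is an exact derivative: it can be written as $\frac{d}{ds}\bigl[\frac{1}{2}(k')^2 + k k'' - \frac{a}{2} k^4\bigr] = 0$, giving the first integral $(k')^2 + 2 k k'' - a k^4 = C$ for some constant $C$. Arguing by contradiction, suppose $k$ is non-constant. Then on some open interval $J$ we have $k' \neq 0$ and, after shrinking $J$ if necessary, $k \neq 0$. On $J$ the map $s \mapsto k(s)$ is a valid change of parameter; writing $v(k) := (k')^2$ (so that $v'_k = 2k''$), the first integral becomes $(kv)'_k = C + ak^4$, which integrates to
\[
(k')^2 = v(k) = C + \frac{a}{5} k^4 + \frac{D}{k}
\]
for a further constant $D$. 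Successive differentiation now provides explicit rational expressions for $k''$, $k'''$ and $k''''$ in terms of $k$ alone.

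The final step is to substitute these expressions, together with $(k')^2 = v(k)$, into the $F_2$-component equation. The result is a Laurent polynomial identity in the real variable $k$ which must hold on an open interval of $k$-values, and hence all of its coefficients must vanish. A direct computation reveals that the coefficient of $k^5$ equals $-\frac{126}{25}$, a nonzero rational number \emph{independent} of the constants $C, D$, of the sectional curvature $c$, and of the signs $\epsilon_1, \epsilon_2$. This contradiction forces $k' \equiv 0$, proving the theorem. The main obstacle is the algebraic bookkeeping in the substitution step; the decisive quantity $-\frac{126}{25}$ arises from combining the $k^5$-contributions of the four terms $k''''$, $-15 a k(k')^2$, $-10 a k^2 k''$ and $k^5$ in the $F_2$-component equation, and its non-vanishing is the structural reason why triharmonicity, unlike $r$-harmonicity for $r\geq 4$, forces the curvature $k$ to be constant.
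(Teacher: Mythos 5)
Your proof is correct and follows essentially the same route as the paper: integrate the tangential component of $\tau_3(\gamma)$ twice to express $k''$ and $(k')^2$ as explicit functions of $k$ (your $(k')^2 = C + \tfrac{a}{5}k^4 + D/k$ is exactly the paper's first integral with $D=-2c_1$, $C=c_2$), then substitute into the normal component to obtain a Laurent polynomial in $k$ that cannot vanish identically. Your explicit verification that the $k^5$-coefficient equals $-\tfrac{126}{25}$ (which indeed checks out: $\tfrac{24}{25}-3-4+1$) is a welcome sharpening of the paper's unjustified assertion that the resulting polynomial is ``not identically zero.''
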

This result leads us to the classification of proper triharmonic $2$-Frenet curves in semi-Riemannian space forms. Specifically, we shall prove the following:
\begin{corollary}\label{Th-classif-triharm-space-form}
Let $\gamma$ be a proper triharmonic $2$-Frenet curve in a semi-Riemannian space form $N^m_t(c)$, $m \geq 2$, $1 \leq t \leq m-1$. Then necessarily $\epsilon_2 \,c>0$, where $\epsilon_2=\langle N,N\rangle$, and the curvature of $\gamma$ is a constant given by
\[
\kappa= \sqrt{2 \,\epsilon_2\,c}\,.
\]
\end{corollary}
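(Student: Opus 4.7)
The plan is to deduce this corollary as a direct combination of Theorems \ref{Prop-3-harmon+2-Frenet-implica-k-costante}, \ref{Prop-r-harm-2-dim-space-form} and \ref{Prop-r-harm-m-dim-space-form}. Since $\gamma$ is triharmonic (i.e., $r=3$) and a $2$-Frenet curve, Theorem~\ref{Prop-3-harmon+2-Frenet-implica-k-costante} first yields that $k(s)$ is a constant, say $k(s)\equiv\kappa$. Because $\gamma$ is \emph{proper} (not a geodesic) we have $\kappa>0$, so $\gamma$ is in fact a helix in the sense of Definition~\ref{def-helices-order-n}.

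I would then split the argument according to the ambient dimension. For $m\geq 3$, Theorem~\ref{Prop-r-harm-m-dim-space-form} applied with $r=3$ immediately gives the necessary and sufficient condition
\[
\kappa^2-2\,\epsilon_2\,c=0,
\]
that is $\kappa^2=2\,\epsilon_2\,c$. For $m=2$ one has $t=1$, and the curve is either space-like or time-like. In the space-like case $\epsilon_1=1$, $\epsilon_2=-1$, and Theorem~\ref{Prop-r-harm-2-dim-space-form} with $r=3$ gives $\kappa^2+2c=0$, which rewrites as $\kappa^2=-2c=2\,\epsilon_2\,c$. The time-like case ($\epsilon_1=-1$, $\epsilon_2=1$) is not explicitly stated in Theorem~\ref{Prop-r-harm-2-dim-space-form}, but follows from the very same computation with the signs of the $\epsilon_j$'s flipped, giving once more $\kappa^2=2\,\epsilon_2\,c$.

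Combining both cases, the $r=3$ condition reduces uniformly to $\kappa^2=2\,\epsilon_2\,c$. Since $\kappa>0$ this forces $\epsilon_2\,c>0$ (in particular $c\neq0$), and solving for $\kappa$ yields the stated formula $\kappa=\sqrt{2\,\epsilon_2\,c}$. The only mildly delicate point in the whole argument is the time-like case in dimension two, which is not directly covered by Theorem~\ref{Prop-r-harm-2-dim-space-form} but requires no new idea, merely a parallel computation with the reversed causal character.
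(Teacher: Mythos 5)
Your proof is correct and takes essentially the same route as the paper, which simply deduces the corollary by ``putting together'' Theorem~\ref{Prop-3-harmon+2-Frenet-implica-k-costante} (constancy of $k$) and Theorem~\ref{Prop-r-harm-m-dim-space-form} with $r=3$. Your extra care with the two-dimensional time-like case is justified, since Theorem~\ref{Prop-r-harm-2-dim-space-form} is stated only for space-like curves and Theorem~\ref{Prop-r-harm-m-dim-space-form} only for $m\geq 3$; as you observe, the underlying computation $\tau_r(\gamma)=\epsilon_2\,\kappa^{2r-3}\bigl(\kappa^2-\epsilon_2(r-1)c\bigr)N$ is insensitive to the causal character and yields $\kappa^2=2\,\epsilon_2\,c$ uniformly.
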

\begin{remark}
A $2$-Frenet helix as in Corollary~\ref{Th-classif-triharm-space-form} necessarily lies on a totally geodesic  $N^2_{t'}(c)$, $0 \leq t' \leq 2$. Since $r$-harmonicity is not affected by multiplication of the ambient metric by $-1$, there are only two relevant cases, i.e., $t'=0$ and $t'=1$. If $t'=0$, then $\epsilon_1=\epsilon_2=1$, $c>0$ and the curve is a triharmonic circle in an Euclidean sphere. The geometrically interesting case is $t'=1$. In this case, $\epsilon_1\,\epsilon_2=-1$ and, considering the natural embeddings of $\s^2_1(c)$ in $\R^3_1$ or $\h^2_1(c)$ in $\R^3_2$, the curve in both cases is the intersection of $N^2_1(c)$ with a plane.
\end{remark}
\vspace{2mm}

In Theorem~\ref{Prop-3-harmon+2-Frenet-implica-k-costante} we have used in an essential way the fact that the \( 2 \)-Frenet curves lie in an \( m \)-dimensional semi-Riemannian manifold with constant sectional curvature. 

Therefore, a natural problem is to investigate the existence of \( r \)-harmonic, \( n \)-Frenet curves with \textit{non constant} curvatures in a semi-Riemannian space such that its sectional curvature is not constant. 

In this direction, let \( \gamma: I \to \R^3_1 \) be a curve parametrized by the arc length \( s \) that satisfies the Frenet equations.

\begin{equation}\label{eq-Frenet-R3,1}
\begin{cases}
\nabla_T T = \epsilon_2 \,k_\gamma(s) \,N \\
\nabla_T N = -\epsilon_1\,k_\gamma(s) \,T + \epsilon_3 \, \tau_\gamma(s) B \\
\nabla_T B = - \epsilon_2 \,\tau_\gamma(s) \,N \,.
\end{cases} 
\end{equation}

Then we associate to $\gamma$ a ruled surface $S_\gamma$ immersed in $\R^3_1$ and locally described by the parametrization
\begin{equation}
X(s,v)=\gamma(s)+v\,N(s) \,,
\end{equation}
where $s\in I$ and $v \in (-\delta, \delta)$ for some $\delta>0$.

Our main result in this context is a semi-Riemannian version of a result of \cite{MR4308322}:
\begin{theorem}\label{Th-trihar-curve-k-nonconstant}
There exist a curve $\gamma: I \to \R^3_1$ parametrized by the arc length $s$, which verifies the Frenet equations \eqref{eq-Frenet-R3,1} with $\epsilon_3=1$, and a small $\delta>0$ such that the associated $S_\gamma$ is a Lorentz surface and $\gamma(s)=X(s,0)$ is a proper triharmonic curve in $S_\gamma$ with non constant curvature $k(s)$.
\end{theorem}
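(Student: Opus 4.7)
The plan is to argue intrinsically on $S_\gamma$, following the strategy of the Riemannian version in \cite{MR4308322}. Since $\epsilon_1\epsilon_2\epsilon_3=-1$ in any orthonormal frame of $\R^3_1$, the hypothesis $\epsilon_1=\epsilon_3=1$ (I treat the spacelike case; the timelike case is analogous) forces $\epsilon_2=-1$. At $v=0$ the tangent plane $\mathrm{span}\{T,N\}$ of $S_\gamma$ has Lorentz signature $(+,-)$; by continuity the first fundamental form stays Lorentz on some slab $v\in(-\delta,\delta)$. Because $X_v|_{v=0}=N$ is tangent to $S_\gamma$, the Gauss formula gives $\nabla^{S_\gamma}_T T=\epsilon_2 k_\gamma N=-k_\gamma N$, so the intrinsic $2$-Frenet data of $\gamma$ in $S_\gamma$ inherit the extrinsic data of $\gamma$ in $\R^3_1$: the intrinsic curvature coincides with $k(s)=k_\gamma(s)$ and the intrinsic Frenet normal is $N$ with $\epsilon_2^{S_\gamma}=-1$. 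The Gauss equation for $S_\gamma\subset\R^3_1$, applied with spacelike unit normal $\nu=B$ and second fundamental form $II(T,T)=II(N,N)=0$, $II(T,N)=\tau_\gamma$ (read off from \eqref{eq-Frenet-R3,1}), then yields
\[
K(s):=K_{S_\gamma}(s,0)=-\tau_\gamma(s)^2.
\]

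Next I would expand $\tau_3(\gamma)$ by means of \eqref{r-harmonicity-curves} with $r=3$, the intrinsic Frenet equations, and the $2$-dimensional identity $R^{S_\gamma}(X,Y)Z=K[\langle Y,Z\rangle X-\langle X,Z\rangle Y]$. A direct, if laborious, computation gives
\[
\tau_3(\gamma)=5\bigl(2k^3 k'+2k' k''+k k'''\bigr)\,T+\bigl(-k^5-k^{(4)}-15 k (k')^2-10 k^2 k''-K(k''+2k^3)\bigr)\,N .
\]
The tangential equation integrates once to the first integral $k^4+2k k''+(k')^2=2C$, $C\in\R$. Around the constant solution $k\equiv\kappa>0$ (with $C=\kappa^4/2$), a linearization $k=\kappa+\varepsilon\phi$ produces $\phi''+2\kappa^2\phi=0$, whose oscillatory solutions, combined with standard ODE existence, deliver a genuinely non-constant local solution $k(s)$ of the tangential equation on some open interval $I$.

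For such $k$, the normal equation forces
\[
K(s)=-\frac{k^5+k^{(4)}+15 k (k')^2+10 k^2 k''}{k''+2 k^3},
\]
which at $k\equiv\kappa$ reduces to $K=-\kappa^2/2<0$, in agreement with Theorem~\ref{Prop-r-harm-2-dim-space-form} (case $r=3$). By continuity $K(s)<0$ on $I$ after shrinking if necessary, so $\tau_\gamma(s):=\sqrt{-K(s)}>0$ is smooth on $I$. The fundamental theorem of semi-Riemannian curves then produces a spacelike arc-length parametrized curve $\gamma:I\to\R^3_1$ with curvature $k_\gamma=k$, torsion $\tau_\gamma$, and causal characters $\epsilon_2=-1$, $\epsilon_3=1$. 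By construction, the Gaussian curvature of its ruled surface $S_\gamma$ along $\gamma$ equals $-\tau_\gamma^2=K(s)$, so the normal component of $\tau_3(\gamma)$ vanishes; together with the tangential equation this gives $\tau_3(\gamma)=0$, while non-constancy of $k$ makes $\gamma$ proper. The main obstacle is to secure $-K>0$ along $I$ simultaneously with the tangential equation: this is handled perturbatively, using the first integral to confine $k(s)$ to a small neighborhood of a constant helix solution, where the right-hand side of the normal equation remains strictly negative by continuity.
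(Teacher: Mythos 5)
Your overall strategy is the same as the paper's: reduce to the intrinsic two--dimensional problem on $S_\gamma$, write the tritension field of a curve in a Lorentz surface, integrate the tangential component to a first integral, solve the normal component for the Gaussian curvature $K$ (hence for $\tau_\gamma$), and invoke the fundamental theorem of curves. Your tritension formula and the first integral $k^4+2kk''+(k')^2=\mathrm{const}$ are correct for your choice of signs. However, there is a genuine gap in the one place you wave off as ``analogous'': the choice of causal characters. The Gauss equation for a Lorentz surface requires dividing $\langle R^S(T,N)N,T\rangle$ by $\epsilon_1\epsilon_2=-1$, so with $h(T,T)=h(N,N)=0$, $h(T,N)=\tau_\gamma$ and space-like surface normal one gets
\begin{equation*}
K_{S_\gamma}\big|_{v=0}=\frac{\langle R^S(T,N)N,T\rangle}{\epsilon_1\epsilon_2}=\frac{-\tau_\gamma^2}{-1}=+\tau_\gamma^2\,,
\end{equation*}
not $-\tau_\gamma^2$ as you claim (this agrees with the paper's direct computation $K_S=-\epsilon_1\epsilon_2\epsilon_3\tau_\gamma^2=\tau_\gamma^2$). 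Consequently $K$ must be \emph{nonnegative} along $\gamma$.

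This sign error hides the fact that your configuration $\epsilon_1=1$, $\epsilon_2=-1$ (space-like curve with time-like normal) cannot work near the constant solution: your normal equation correctly gives $K=-\bigl(k^{(4)}+10k^2k''+15k(k')^2+k^5\bigr)/(k''+2k^3)$, which equals $-\kappa^2/2<0$ at $k\equiv\kappa$ and stays negative for nearby non-constant solutions by continuity, so the required identity $K=\tau_\gamma^2$ has no real solution $\tau_\gamma$ there. The construction succeeds only for $\epsilon_1=-1$, $\epsilon_2=1$ (time-like curve with space-like normal), where the sign of the non-$K$ terms in the normal component flips and one obtains $K=+\bigl(k^{(4)}+10k^2k''+15k(k')^2+k^5\bigr)/(k''+2k^3)>0$ near the constant solution; this is exactly the choice the paper makes (and why its final expression $\tau_\gamma^2=63(1-2\overline{k}_\gamma^4)/\bigl(10\overline{k}_\gamma^2(\epsilon_1+5\epsilon_2)\bigr)$ is positive only when $\epsilon_2=1$). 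To repair your argument: fix the sign of $K$, take the time-like case, and the rest of your perturbative scheme goes through essentially as in the paper.
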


Now, we turn our attention to the study of $3$-Frenet curves, where the two relevant curvatures will be denoted $k(s),\,\tau(s)$ as it is usual in the literature. In this setting, our first result is:

\begin{theorem}\label{Th-r-harm-cond-3-dim-space-form}
Let $\gamma$ be a $3$-Frenet helix parametrized by the arc length $s$ into an $m$-dimensional semi-Riemannian space form $N^m_t(c)$, $m \geq 3$, $1 \leq t \leq m-1$. Then 
\begin{itemize}
\item[{\rm (i)}] $\gamma$ is a proper biharmonic curve if and only if $\epsilon_2 \, (\epsilon_1 \, \kappa^2 + \epsilon_3 \tau^2) = c \, \epsilon_1$;
\item[{\rm (ii)}] $\gamma$ is a proper triharmonic curve if and only if $\epsilon_2 \, (\epsilon_1 \, \kappa^2 + \epsilon_3 \tau^2 )^2 = c \, \big( \epsilon_1\,\epsilon_3 \, \tau^2\, + 2 \kappa^2 \big)$;
\item[{\rm (iii)}] $\gamma$ is a proper $r$-harmonic curve, $r \geq  4$, if and only if the following condition holds:
\begin{equation}\label{eq-r-harmonicity-spaceforms-r>3}
 \big(\epsilon_1 \kappa^2 + \epsilon_3 \tau^2\big)^{r-3}\,\Big [\epsilon_2 \, (\epsilon_1 \, \kappa^2 + \epsilon_3 \, \tau^2)^2 - c \, \big( \epsilon_1\,\epsilon_3 \, \tau^2\, + (r - 1) \kappa^2 \big )\Big ]=0\,.
\end{equation}
\end{itemize}
\end{theorem}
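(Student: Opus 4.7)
The plan is to reduce the vector equation $\tau_r(\gamma)=0$ to a single scalar equation by first computing $P_k:=\nabla_T^k T$ explicitly in the Frenet frame $\{T,N,B\}$, and then exploiting the fact that in a space form the curvature satisfies $R(X,Y)Z=c(\langle Y,Z\rangle X-\langle X,Z\rangle Y)$. Set $A:=\epsilon_1\kappa^2+\epsilon_3\tau^2$. From \eqref{Frenet-field-general-pseudo} one sees that $\nabla_T$ sends $aT+bB$ to $\epsilon_2(a\kappa-b\tau)N$ and sends $cN$ to $c(-\epsilon_1\kappa T+\epsilon_3\tau B)$, so $P_{2k+1}$ is always a multiple of $N$ while $P_{2k}\in\mathrm{span}\{T,B\}$. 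A short induction gives the closed forms $P_{2k+1}=(-1)^k\epsilon_2^{k+1}\kappa A^k\,N$ and, for $k\ge 1$, $P_{2k}=(-1)^k\epsilon_1\epsilon_2^k\kappa^2 A^{k-1}\,T+(-1)^{k+1}\epsilon_2^k\epsilon_3\kappa\tau A^{k-1}\,B$; the key observation is that for $k\ge 1$ the $T$- and $B$-coefficients of $P_{2k}$ are in constant ratio, which collapses the recursion to $\alpha_{k+1}=-\epsilon_2 A\,\alpha_k$.

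Next, since $\langle P_j,T\rangle=0$ for odd $j$ and $\langle P_{2j},T\rangle=\epsilon_1\alpha_j$, and since $\ell$ and $2r-3-\ell$ always have opposite parities in the sum of \eqref{r-harmonicity-curves}, exactly one of the two inner products appearing in $R(P_{2r-3-\ell},P_\ell)T$ is nonzero in each summand, and the surviving contribution lies along $N$. A re-indexing $\ell\mapsto 2r-3-\ell$ on the odd-$\ell$ portion packs both parities into the single uniform expression
\[
\sum_{\ell=0}^{r-2}(-1)^\ell R(P_{2r-3-\ell},P_\ell)T=c\,\epsilon_1\Bigl(\sum_{j=0}^{r-2}\alpha_j\gamma_{r-2-j}\Bigr)N,
\]
where $\gamma_k$ denotes the $N$-coefficient of $P_{2k+1}$. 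Plugging in the explicit formulas, the $A$-powers combine as $A^{j-1}\cdot A^{r-2-j}=A^{r-3}$ and all signs cancel, so each summand with $j\ge 1$ equals the same constant $(-1)^r\epsilon_1\epsilon_2^{r-1}\kappa^3 A^{r-3}$, independently of $j$. The inner sum therefore collapses, and after adding $\gamma_{r-1}N$ and factoring out $(-1)^{r-1}\epsilon_2^{r-1}\kappa A^{r-3}$ one arrives at
\[
\tau_r(\gamma)=(-1)^{r-1}\epsilon_2^{r-1}\kappa A^{r-3}\bigl[\epsilon_2 A^2-c\bigl((r-1)\kappa^2+\epsilon_1\epsilon_3\tau^2\bigr)\bigr]N.
\]
Since $\kappa>0$, this yields (iii) directly for $r\ge 4$; specialising to $r=3$ (so that $A^{r-3}=1$) gives (ii); and for $r=2$ the range $j\ge 1$ is empty, so one handles the case separately, computing $\gamma_1+c\epsilon_1\alpha_0\gamma_0=0$, which simplifies to $\epsilon_2 A=c\epsilon_1$, i.e.\ (i).

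The main obstacle will be the careful bookkeeping in the re-indexing step: one has to verify uniformly in the parity of $r$ that the two parity halves of the sum tile exactly the index range $\{0,\ldots,r-2\}$, and then observe the mildly surprising fact that all summands with $j\ge 1$ are identical---this is the small \emph{miracle} that makes the closed-form collapse possible. Keeping the three signs $\epsilon_1,\epsilon_2,\epsilon_3$ straight alongside the alternating $(-1)^\ell$ throughout the induction and the summation is where calculational errors are most likely to creep in.
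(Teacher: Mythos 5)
Your proposal is correct and follows essentially the same route as the paper: your closed forms for $P_{2k}$ and $P_{2k+1}$ are exactly the paper's Lemma~\ref{lemma-Branding-nablaelle-pseudo} (your $\alpha_k,\beta_k,\gamma_k$ are its ${\mathcal A}_k,{\mathcal B}_k,{\mathcal C}_k$), and the reduction via \eqref{tensor-curvature-N(c)} to a scalar equation along $N$, the parity split of the curvature sum, and the collapse to $\tau_r(\gamma)=(-1)^{r-1}\epsilon_2^{r-1}\kappa A^{r-3}\bigl[\epsilon_2A^2-c\bigl((r-1)\kappa^2+\epsilon_1\epsilon_3\tau^2\bigr)\bigr]N$ all match the paper's computation. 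The only (cosmetic) difference is that your re-indexing $\ell\mapsto 2r-3-\ell$ packs both parities of $r$ into one uniform sum, whereas the paper treats $r$ even in detail and declares the odd case analogous; your verification that the two parity halves tile $\{0,\ldots,r-2\}$ and that all $j\ge1$ summands coincide is sound.
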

According to Theorem~\ref{Th-r-harm-cond-3-dim-space-form} the existence of proper $r$-harmonic $3$-Frenet helices depends strongly on the sign of $\epsilon_1,\epsilon_2,\epsilon_3$ and $c$. Therefore, we discuss now some relevant cases. 

First, we focus on the geometrically significant Lorentzian case $N^3_1(c)$. 
In this space necessarily we have $\epsilon_3 = -\epsilon_1 \, \epsilon_2$. Our first corollary shows that in the flat case the condition of $r$-harmonicity depends on the casual character of the normal vector field $N$. More precisely, we have
\begin{corollary}\label{Cor-facile}
Let $\gamma$ be a $3$-Frenet helix parametrized by the arc length $s$ into $\R^3_1$. 
Then $\gamma$ is proper $r$-harmonic, $r \geq 2$, if and only if its normal vector field $N$ is space-like and $\kappa^2 = \tau^2$.
\end{corollary}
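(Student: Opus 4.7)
The plan is to derive Corollary~\ref{Cor-facile} as a direct specialization of Theorem~\ref{Th-r-harm-cond-3-dim-space-form} to the flat $3$-dimensional Lorentzian setting, so that no new machinery is required. As a preliminary observation, I record the arithmetic of the signs: because $\R^3_1$ has index $1$, exactly one of the three numbers $\epsilon_1,\epsilon_2,\epsilon_3$ equals $-1$, so $\epsilon_1\epsilon_2\epsilon_3=-1$ and therefore $\epsilon_3=-\epsilon_1\epsilon_2$, as already pointed out in the paper just before the statement of the corollary.

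Next I would substitute $c=0$ in each of the three items of Theorem~\ref{Th-r-harm-cond-3-dim-space-form}. For $r=2$ the condition collapses to $\epsilon_2(\epsilon_1\kappa^2+\epsilon_3\tau^2)=0$; for $r=3$ it becomes $\epsilon_2(\epsilon_1\kappa^2+\epsilon_3\tau^2)^2=0$; and for $r\geq 4$ the bracket in \eqref{eq-r-harmonicity-spaceforms-r>3} simplifies to $\epsilon_2(\epsilon_1\kappa^2+\epsilon_3\tau^2)^2$, so the whole left-hand side equals $\epsilon_2(\epsilon_1\kappa^2+\epsilon_3\tau^2)^{r-1}$. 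Since $\epsilon_2=\pm 1$, in every case $r\geq 2$ the $r$-harmonicity condition in $\R^3_1$ reduces to the single scalar equation
\[
\epsilon_1\kappa^2+\epsilon_3\tau^2=0.
\]

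Inserting the relation $\epsilon_3=-\epsilon_1\epsilon_2$ into this equation and multiplying through by $\epsilon_1$ yields $\kappa^2=\epsilon_2\tau^2$. Because the curve is assumed to be \emph{proper} $r$-harmonic the curvature satisfies $\kappa>0$, so the left-hand side is strictly positive; this forces $\epsilon_2=1$ (that is, $N$ is space-like) and $\kappa^2=\tau^2$, which is exactly the claim of the corollary. The converse is immediate, since any $3$-Frenet helix in $\R^3_1$ with $\epsilon_2=1$ and $\kappa^2=\tau^2$ trivially verifies $\epsilon_1\kappa^2+\epsilon_3\tau^2=0$ and is therefore proper $r$-harmonic for every $r\geq 2$. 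I do not anticipate any real obstacle in this proof: the argument is a substitution in Theorem~\ref{Th-r-harm-cond-3-dim-space-form} followed by a brief sign discussion, the only subtle point being the exclusion of the case $\epsilon_2=-1$, which is ruled out because $\kappa^2=-\tau^2$ would force $\kappa=\tau=0$, contradicting the properness assumption.
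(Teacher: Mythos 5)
Your proposal is correct and follows essentially the same route as the paper: setting $c=0$ reduces all cases of Theorem~\ref{Th-r-harm-cond-3-dim-space-form} to the single equation $\epsilon_1\kappa^2+\epsilon_3\tau^2=0$, and the index-one constraint $\epsilon_3=-\epsilon_1\epsilon_2$ together with $\kappa>0$ forces $\epsilon_2=1$ and $\kappa^2=\tau^2$. The paper phrases the sign step as ``$\epsilon_1$ and $\epsilon_3$ must have opposite signs, hence $\epsilon_2=1$,'' which is equivalent to your computation $\kappa^2=\epsilon_2\tau^2>0$.
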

\begin{remark}
A $3$-Frenet helix in $\R^3_1$ with $\epsilon_2=1$ and verifying $\kappa^2 = \tau^2$ is proper $r$-harmonic for all $r\geq 2$. This property is consistent with the fact that the ambient space is flat. Explicit examples of curves which verify the condition $\kappa^2 = \tau^2$ are given in \cite[Proposition~3.4]{Sasahara}.
\end{remark}
Next, we examine biharmonicity in a non flat Lorentzian case. Then we obtain:
\begin{corollary}
Let $\gamma$ be a $3$-Frenet helix parametrized by the arc length $s$ into a $3$-dimensional Lorentzian space form $N^3_1(c)$ with $c \neq 0$.
\begin{itemize}
\item[{\rm (i)}] If $c<0$ and the normal vector field $N$ is space-like, then $\gamma$ is a proper biharmonic curve if and only if $\tau^2 -\kappa^2 =-c$.
\item[{\rm (ii)}]  If $c<0$ and the normal vector field $N$ is time-like, then $\gamma$ is a proper biharmonic curve if and only if $\tau^2 +\kappa^2 =-c$. 
\item[{\rm (iii)}] If $c>0$ and the normal vector field $N$ is space-like, then $\gamma$ is a proper biharmonic curve if and only if $\kappa^2 =\tau^2 +c$.
\item [{\rm (iv)}] If $c>0$ and the normal vector field $N$ is time-like, then $\gamma$ cannot be proper biharmonic.
\end{itemize}
\end{corollary}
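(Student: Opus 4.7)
The strategy is to apply Theorem~\ref{Th-r-harm-cond-3-dim-space-form}(i), which gives the proper biharmonicity condition
\[
\epsilon_2 \bigl(\epsilon_1 \kappa^2 + \epsilon_3 \tau^2 \bigr) = c\,\epsilon_1,
\]
and then to simplify it using the constraints that the Lorentzian index puts on the signs $\epsilon_j$. Since $N^3_1(c)$ has index one and $\{F_1,F_2,F_3\}=\{T,N,B\}$ is orthonormal, exactly one of $\epsilon_1,\epsilon_2,\epsilon_3$ equals $-1$; this forces $\epsilon_3=-\epsilon_1\epsilon_2$, the identity already noted in the paragraph immediately preceding the statement. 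Plugging this into the biharmonicity equation, using $\epsilon_i^2=1$, and dividing through by $\epsilon_1$ collapses everything to the single clean relation
\[
\epsilon_2 \,\kappa^2 - \tau^2 \;=\; c.
\]

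From here I would simply split into four cases according to the sign of $c$ and the causal character of $N$ (equivalently, the value of $\epsilon_2$). In case (i), $\epsilon_2=1$ and $c<0$, and the relation reads $\kappa^2-\tau^2=c$, i.e., $\tau^2-\kappa^2=-c$, as claimed. In case (ii), $\epsilon_2=-1$ and $c<0$, and the relation reads $-\kappa^2-\tau^2=c$, i.e., $\kappa^2+\tau^2=-c$. In case (iii), $\epsilon_2=1$ and $c>0$, and the relation reads $\kappa^2=\tau^2+c$. In case (iv), $\epsilon_2=-1$ and $c>0$, which would require $-\kappa^2-\tau^2=c>0$; this is impossible since the left-hand side is nonpositive, so no proper biharmonic $3$-Frenet helix exists in this case.

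Finally, I would note that since $\gamma$ is a $3$-Frenet helix, its first curvature $\kappa=k_1$ is positive by the definition of a Frenet curve, so $\nabla_T T\neq 0$ and $\gamma$ is automatically non-geodesic; hence any helix satisfying the relevant algebraic relation is indeed \emph{proper} biharmonic. The only step requiring care is the sign bookkeeping when substituting $\epsilon_3=-\epsilon_1\epsilon_2$; there is no substantial obstacle, as the result follows directly from Theorem~\ref{Th-r-harm-cond-3-dim-space-form}(i).
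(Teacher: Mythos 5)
Your proposal is correct and follows exactly the route the paper intends: the corollary is an immediate consequence of Theorem~\ref{Th-r-harm-cond-3-dim-space-form}(i) combined with the Lorentzian constraint $\epsilon_3=-\epsilon_1\epsilon_2$, which reduces the biharmonicity condition to $\epsilon_2\kappa^2-\tau^2=c$, after which the four cases follow by inspection. The sign bookkeeping and the observation that $\kappa=k_1>0$ guarantees properness are both handled correctly.
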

Finally, here is a consequence of Theorem~\ref{Th-r-harm-cond-3-dim-space-form} in the non flat case. 
\begin{corollary}\label{Cor-tecnico}
Assume that $r \geq 3$. Let $\gamma$ be a $3$-Frenet helix parametrized by the arc length $s$ into a non flat semi-Riemannian space form $N^{3}_1(c)$. 
\begin{itemize}
\item[(i)] If $c>0$ and the normal vector field $N$ is space-like, then $\gamma$ is a proper $r$-harmonic curve if and only if $\kappa^2 = \tau^2$ for $r > 3$ or, for $r \geq 3$, if one of the following cases holds:
\vspace{0.6 mm}
\begin{itemize}
\item[(a)] $\kappa^2 \geq c(r-1)$ and $\tau^2 = \dfrac{2\kappa^2 - c \pm \sqrt{c^2 + 4c(r-2)\kappa^2}}{2}\,;$
\item[(b)] $\kappa^2 < c(r-1)$ and $\tau^2 = \dfrac{2\kappa^2 - c + \sqrt{c^2 + 4c(r-2)\kappa^2}}{2}\,.$
\end{itemize}
\vspace{0.6 mm}
\item[(ii)] If $c>0$ and the normal vector field $N$ is time-like, then $\gamma$ cannot be proper $r$-harmonic  for any $r \geq 3$.
\vspace{0.6 mm}
\item[(iii)] If $c<0$ and the normal vector field $N$ is space-like, the $\gamma$ is a proper $r$-harmonic curve if and only if $\kappa^2 = \tau^2$ for $r > 3$ or
$$\kappa^2 \leq - \dfrac{c}{4 (r-2)} \,\,\,{\rm and} \,\,\,\tau^2 = \dfrac{2\kappa^2 - c \pm \sqrt{c^2 + 4c(r-2)\kappa^2}}{2}$$
for $r \geq 3 $.
\vspace{0.6 mm}
\item[(iv)] If $c<0$ and the normal vector field $N$ is time-like, then $\gamma$ is a proper $r$-harmonic curve, $r \geq 3$, if and only if :
\vspace{0.6 mm}

 $\kappa^2 \leq -c(r-1)$ and $\tau^2 = \dfrac{-2\kappa^2 - c + \sqrt{c^2 - 4c(r-2)\kappa^2}}{2}\,.$
\end{itemize}
\end{corollary}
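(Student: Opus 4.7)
My plan is to reduce the corollary to Theorem~\ref{Th-r-harm-cond-3-dim-space-form}(ii)--(iii) specialized to the Lorentzian setting $N^3_1(c)$, and then to carry out a case-by-case analysis of the resulting algebraic conditions. I would start from the observation that in $N^3_1(c)$ the causal signs satisfy $\epsilon_1\epsilon_2\epsilon_3=-1$, so the four cases of the corollary correspond to exactly two sign patterns: $(\epsilon_1,\epsilon_2,\epsilon_3)=(1,1,-1)$ for the space-like $N$ scenarios (i) and (iii), and $(\epsilon_1,\epsilon_2,\epsilon_3)=(1,-1,1)$ for the time-like $N$ scenarios (ii) and (iv). Upon substituting into \eqref{eq-r-harmonicity-spaceforms-r>3}, the outer factor $(\epsilon_1\kappa^2+\epsilon_3\tau^2)^{r-3}$ reduces to $(\kappa^2-\tau^2)^{r-3}$ or $(\kappa^2+\tau^2)^{r-3}$ respectively. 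For space-like $N$ and $r>3$ the vanishing of this factor yields the solution $\kappa^2=\tau^2$ stated in (i) and (iii); for time-like $N$ the factor is strictly positive and plays no role.

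The remaining condition (the sole condition when $r=3$) is that the bracket of \eqref{eq-r-harmonicity-spaceforms-r>3} vanishes:
\[
\epsilon_2\,\bigl(\epsilon_1\kappa^2+\epsilon_3\tau^2\bigr)^2 \;=\; c\,\bigl(\epsilon_1\epsilon_3\tau^2+(r-1)\kappa^2\bigr),
\]
which is also the triharmonic condition of Theorem~\ref{Th-r-harm-cond-3-dim-space-form}(ii). Case (ii) is then disposed of immediately: with $c>0$ and $(\epsilon_1,\epsilon_2,\epsilon_3)=(1,-1,1)$ the right-hand side is strictly positive while the left-hand side equals $-(\kappa^2+\tau^2)^2\leq 0$, so no proper $r$-harmonic helix exists. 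In the three other cases I would view the equation as a quadratic in $\tau^2$; a short computation shows that the discriminant equals $c^2+4c(r-2)\kappa^2$ for space-like $N$ and $c^2-4c(r-2)\kappa^2$ for time-like $N$, and the corresponding roots
\[
\tau^2=\frac{2\kappa^2-c\pm\sqrt{c^2+4c(r-2)\kappa^2}}{2},\qquad \tau^2=\frac{-2\kappa^2-c\pm\sqrt{c^2-4c(r-2)\kappa^2}}{2}
\]
are exactly the expressions appearing in (i), (iii), (iv).

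The final step, and the main technical obstacle, is the case-by-case determination of when each root gives an admissible $\tau^2\geq 0$. For (i) ($c>0$) the discriminant is automatically non-negative; the plus-sign root is always admissible, and a brief algebraic manipulation shows the minus-sign root is admissible iff $(2\kappa^2-c)^2\geq c^2+4c(r-2)\kappa^2$ together with $2\kappa^2\geq c$, which simplifies to $\kappa^2\geq c(r-1)$ and produces the dichotomy (a)--(b). For (iii) ($c<0$, space-like $N$) the discriminant is non-negative iff $\kappa^2\leq -c/(4(r-2))$, under which both roots are non-negative since $2\kappa^2-c>0$ and the analogous squaring reduces to the automatically true inequality $\kappa^2\geq c(r-1)$. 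Finally, for (iv) ($c<0$, time-like $N$) the discriminant is always positive; the minus-sign root forces the incompatible inequalities $\kappa^2\leq -c/2$ and $\kappa^2\geq -c(r-1)$ and so is inadmissible for $r\geq 3$, whereas the plus-sign root is admissible precisely when $\kappa^2\leq -c(r-1)$. Collecting these admissibility conditions yields the four parts of the corollary.
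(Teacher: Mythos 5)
Your proposal is correct and follows the same route as the paper: reduce to equation \eqref{eq-r-harmonicity-spaceforms-r>3}, treat the vanishing of the factor $(\epsilon_1\kappa^2+\epsilon_3\tau^2)^{r-3}$ separately for $r>3$, and analyse the bracket as a quadratic in $\tau^2$ --- your root-admissibility discussion is exactly the ``routine analysis'' the paper leaves implicit, and it checks out in all four cases. The only slight imprecision is the claim that a space-like $N$ forces $(\epsilon_1,\epsilon_2,\epsilon_3)=(1,1,-1)$, since $(-1,1,1)$ also occurs for a time-like curve; but as the condition depends only on $\epsilon_2$ and the product $\epsilon_1\epsilon_3$, this does not affect the argument.
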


\begin{remark}
It is interesting to point out that Corollary~\ref{Cor-tecnico} guarantees the existence of \( r \)-harmonic helices in a semi-Riemannian space form \( N^{3}_1(c) \) with \( c < 0 \), regardless of the character of the normal vector field $N$. This should be compared with the corresponding Riemannian case, where proper $r$-harmonic helices do not exist when the ambient space has constant negative curvature.
\end{remark}

Next, let us turn our attention to $n$-Frenet curves with $n \geq 4$. Then our first result is:

\begin{theorem}\label{biharmonic-n-Frenet-curve}
Let $\gamma$ be an $n$-Frenet curve parametrized by the arc length $s$ into an $m$-dimensional semi-Riemannian manifold $(M_t^m, g)$, $1 \leq t \leq m-1$, $m \geq n \geq 4$. If
\begin{equation}\label{eq:Rorthogonalfi}
\langle R(F_1,F_2)F_1,W \rangle =0 \quad \forall\, W \in \big({\rm Span}\{F_1,\ldots,F_n\}\big)^\perp \,,
\end{equation}
then $\gamma$ is proper biharmonic if and only if:
\begin{equation}\label{eq-biharm-n-Frenet}
\begin{cases}
k_1 =\kappa_1= {\rm constant} > 0\\
\epsilon_1 \, k_1^2 + \epsilon_3 \, k_2^2 = \langle R(F_2, F_1) F_1, F_2 \rangle\\
k_2' = - \langle R(F_2, F_1) F_1 , F_3 \rangle\\
\epsilon_3 \, k_2 \, k_3 = - \langle R(F_2, F_1) F_1 , F_4 \rangle\\
\langle R(F_2, F_1) F_1 , F_i \rangle = 0 \quad 5 \leq i \leq n.
\end{cases} 
\end{equation}
\end{theorem}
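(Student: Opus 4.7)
The strategy is to write out $\tau_2(\gamma) = \nabla_T^3 T + R(\nabla_T T, T) T$ entirely in the Frenet frame and read off \eqref{eq-biharm-n-Frenet} as the vanishing of each component.

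I would first iterate the Frenet equations \eqref{Frenet-field-general-pseudo}: starting from $T = F_1$ and $\nabla_T T = \epsilon_2 k_1 F_2$, a short computation gives $\nabla_T^2 T$ with components in ${\rm Span}\{F_1,F_2,F_3\}$, and one more differentiation yields an expression of the form
\begin{equation*}
\nabla_T^3 T = a_1 F_1 + a_2 F_2 + a_3 F_3 + a_4 F_4,
\end{equation*}
where the coefficients $a_i$ are polynomial expressions in $k_1, k_1', k_1'', k_2, k_2', k_3$ and the signs $\epsilon_j$; in particular $a_1$ is proportional to $k_1 k_1'$ and the presence of $F_4$ requires $n \geq 4$, which is part of our hypothesis.

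Next, the non-degeneracy of the orthonormal basis $\{F_1,\ldots, F_n\}$ implies that $V := {\rm Span}\{F_1,\ldots,F_n\}$ is non-degenerate, with $T_{\gamma(s)}M = V \oplus V^\perp$; consequently hypothesis \eqref{eq:Rorthogonalfi} forces $R(F_1,F_2)F_1$, and hence $R(F_2,F_1)F_1 = -R(F_1,F_2)F_1$, to lie in $V$. I can then expand
\begin{equation*}
\epsilon_2 k_1 \, R(F_2,F_1) F_1 \, = \, \epsilon_2 k_1 \sum_{i=1}^n \epsilon_i \, \langle R(F_2,F_1)F_1, F_i \rangle \, F_i,
\end{equation*}
noting that the $i=1$ term vanishes by the standard skew-symmetry $\langle R(X,Y)Z, W\rangle = -\langle R(X,Y)W, Z\rangle$. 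Thus $\tau_2(\gamma)$ is an explicit linear combination of $F_1, \ldots, F_n$.

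Since $\{F_1,\ldots,F_n\}$ is linearly independent, $\tau_2(\gamma) = 0$ is equivalent to the vanishing of each component. The $F_1$-equation gives $a_1 = 0$, forcing $k_1' = 0$ and hence $k_1 \equiv \kappa_1$, a positive constant. Feeding $k_1' = k_1'' = 0$ back into the $F_2$, $F_3$ and $F_4$ components and dividing through by the nonzero factor $k_1$ (together with the appropriate products of $\epsilon_j$, which square to one) yields precisely the second, third and fourth equations of \eqref{eq-biharm-n-Frenet}. Finally, for $5 \leq i \leq n$ the vector $\nabla_T^3 T$ has zero $F_i$-component, so the $F_i$-equation reduces to $\langle R(F_2,F_1)F_1, F_i\rangle = 0$. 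The converse is immediate from the same expansion. The main obstacle is simply the bookkeeping of the signs $\epsilon_j$; the hypothesis \eqref{eq:Rorthogonalfi} is used exactly once, to certify that the curvature term contributes nothing outside $V$ and thereby makes the component-wise reading of $\tau_2(\gamma) = 0$ legitimate.
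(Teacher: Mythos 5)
Your proposal is correct and follows essentially the same route as the paper: compute $\nabla_T^3 T$ by iterating the Frenet equations (the paper records exactly your expansion, with $F_1$-coefficient $-3\epsilon_1\epsilon_2 k_1 k_1'$, in its Proposition~\ref{prop-n-Frenet-bi}), use hypothesis \eqref{eq:Rorthogonalfi} together with $\langle R(F_2,F_1)F_1,F_1\rangle=0$ to expand the curvature term as $\sum_{i=2}^n \epsilon_i\langle R(F_2,F_1)F_1,F_i\rangle F_i$, and read off \eqref{eq-biharm-n-Frenet} componentwise after concluding $k_1=\kappa_1$ from the $F_1$-component. Your explicit remark that the non-degeneracy of ${\rm Span}\{F_1,\ldots,F_n\}$ legitimizes the orthogonal splitting is a small point the paper leaves implicit, but the argument is the same.
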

Since \eqref{eq:Rorthogonalfi} is automatically satisfied when the ambient space is a semi-Riemannian space form, we have the following immediate consequence:
\begin{corollary}\label{Cor-biharmonic-n-Frenet-space-forms}
Let $\gamma$ be an $n$-Frenet curve parametrized by the arc length $s$ into an $m$-dimensional semi-Riemannian space form $N^m_t(c)$, $1 \leq t \leq m-1$, $m\geq n \geq 4$. Then $\gamma$ is proper biharmonic if and only if:
\begin{equation}\label{eq-biharm-n-Frenet-space-forms}
\begin{cases}
k_1 =\kappa_1= {\rm constant} >0\\
k_2 =\kappa_2= {\rm constant} >0\\
\epsilon_1 \, k_1^2 + \epsilon_3 \, k_2^2 = c \,\epsilon_1\,\epsilon_2\,\\
\epsilon_3 \,k_2 \,k_3=0\,.
\end{cases} 
\end{equation}
In particular, there exists no proper biharmonic, full $n$-Frenet curve in $N^m_t(c)$, $m\geq n \geq 4$.
\end{corollary}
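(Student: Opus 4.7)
The plan is to apply Theorem~\ref{biharmonic-n-Frenet-curve} directly, using the fact that in a semi-Riemannian space form $N^m_t(c)$ the curvature operator is explicitly given by
\[
R(X,Y)Z = c\big(\langle Y,Z\rangle\,X - \langle X,Z\rangle\,Y\big).
\]
First I would verify that the hypothesis \eqref{eq:Rorthogonalfi} holds automatically in this setting: for any $W \in \big({\rm Span}\{F_1,\ldots,F_n\}\big)^\perp$ one has $\langle F_1,W\rangle = \langle F_2,W\rangle = 0$, so the formula above yields $\langle R(F_1,F_2)F_1, W\rangle = 0$ at once. Hence Theorem~\ref{biharmonic-n-Frenet-curve} applies unconditionally in $N^m_t(c)$.

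Next I would compute
\[
R(F_2,F_1)F_1 = c\big(\langle F_1,F_1\rangle F_2 - \langle F_2,F_1\rangle F_1\big) = c\,\epsilon_1\,F_2,
\]
and then evaluate the inner products appearing on the right-hand side of \eqref{eq-biharm-n-Frenet}. Using the mutual orthogonality of the Frenet frame one obtains
\[
\langle R(F_2,F_1)F_1, F_2\rangle = c\,\epsilon_1\,\epsilon_2, \qquad \langle R(F_2,F_1)F_1, F_j\rangle = 0 \quad (j\geq 3).
\]
Substituting into \eqref{eq-biharm-n-Frenet} collapses the third line to $k_2'=0$, the fourth line to $\epsilon_3\,k_2\,k_3=0$, and the fifth line to a tautology. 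Together with the first two lines of \eqref{eq-biharm-n-Frenet}, this gives precisely the system \eqref{eq-biharm-n-Frenet-space-forms}, with $k_2=\kappa_2$ strictly positive since Definition~\ref{def-helices-order-n} requires $k_2>0$ whenever $n\geq 3$.

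For the \emph{in particular} assertion, the key observation is that for a full $n$-Frenet curve with $n\geq 4$ the curvature $k_3$ is necessarily nonzero: when $n=4$ it coincides with $k_{n-1}$, which is nonvanishing by the fullness hypothesis, and when $n\geq 5$ it is an intermediate curvature, hence strictly positive by Definition~\ref{def-helices-order-n}. Combined with $k_2>0$ and $\epsilon_3=\pm 1$, this violates the condition $\epsilon_3\,k_2\,k_3=0$ in \eqref{eq-biharm-n-Frenet-space-forms}, so no proper biharmonic full $n$-Frenet curve can exist in $N^m_t(c)$ when $n\geq 4$. No genuine obstacle arises in this proof: once the explicit form of the curvature tensor is inserted in Theorem~\ref{biharmonic-n-Frenet-curve}, the remainder is a direct unpacking of definitions.
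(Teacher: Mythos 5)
Your proposal is correct and follows exactly the route the paper intends: the paper derives this corollary as an immediate consequence of Theorem~\ref{biharmonic-n-Frenet-curve} together with the explicit space-form curvature tensor \eqref{tensor-curvature-N(c)}, which is precisely what you do, including the observation that $R(F_2,F_1)F_1=c\,\epsilon_1 F_2$ collapses the system \eqref{eq-biharm-n-Frenet} to \eqref{eq-biharm-n-Frenet-space-forms}. Your handling of the fullness argument (distinguishing $n=4$, where $k_3=k_{n-1}$, from $n\geq 5$, where $k_3$ is an intermediate and hence positive curvature) correctly fills in the one detail the paper leaves implicit.
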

\begin{remark} The last sentence of Corollary~\ref{Cor-biharmonic-n-Frenet-space-forms} displays a phenomenon analogous to the Riemannian case (compare with \cite[Theorem~1.9]{MR2}). Moreover, as in the Riemannian case, $3$-Frenet biharmonic curves in a semi-Riemannian space-form are necessarily helices. 

Therefore, condition (i) of Theorem~\ref{Th-r-harm-cond-3-dim-space-form} provides the complete classification of proper biharmonic Frenet curves in a semi-Riemannian space form.
\end{remark}
Specializing to the triharmonic case we obtain:
\begin{theorem}\label{cor-n-Frenet-tri-spaceforms}
Let $\gamma$ be a $n$-Frenet helix parametrized by the arc length $s$ into $N_t^n(c)$, $1 \leq t \leq n-1$.
\begin{itemize} 
\item[(i)] If $n=4$, then $\gamma$ is triharmonic if and only if:
\begin{eqnarray*}
 (\epsilon_1\, \kappa_1^2 \,  + \epsilon_3 \, \kappa_2^2)^2  + \epsilon_2 \, \epsilon_4  \, \kappa_2^2 \, \kappa_3^2 & = &c \epsilon_1 \epsilon_2 \, (2 \epsilon_1\, \kappa_1^2 \,  + \epsilon_3 \, \kappa_2^2 ) \\
 \epsilon_2 (\epsilon_1\, \kappa_1^2 \,  + \epsilon_3 \, \kappa_2^2 )+ \epsilon_3 \epsilon_4 \,  \kappa_3^2   & = & c \epsilon_1 \,.
\end{eqnarray*}
\item[(ii)] If $n=5$, then $\gamma$ is triharmonic if and only if:
\begin{eqnarray*}
 (\epsilon_1\, \kappa_1^2 \,  + \epsilon_3 \, \kappa_2^2)^2  + \epsilon_2 \, \epsilon_4  \, \kappa_2^2 \, \kappa_3^2 & = &c \epsilon_1 \epsilon_2 \, (2 \epsilon_1\, \kappa_1^2 \,  + \epsilon_3 \, \kappa_2^2 ) \\
 \epsilon_2 (\epsilon_1\, \kappa_1^2 \,  + \epsilon_3 \, \kappa_2^2 )+ \epsilon_4(\epsilon_3 \,   \kappa_3^2 + \epsilon_5 \kappa_4^2)  & = & c \epsilon_1 \,.
\end{eqnarray*}
\end{itemize}
\end{theorem}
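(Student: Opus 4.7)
The plan is to specialize the polyharmonicity equation \eqref{r-harmonicity-curves} with $r=3$, namely
\[
\tau_3(\gamma)=\nabla_T^5 T+R(\nabla_T^3 T,T)T-R(\nabla_T^2 T,\nabla_T T)T,
\]
and to compute everything explicitly by iterating the Frenet system \eqref{Frenet-field-general-pseudo}. Because all the $\kappa_i$ are constants, the iteration is purely algebraic. Writing $A=\epsilon_1\kappa_1^2+\epsilon_3\kappa_2^2$, I would first obtain
\begin{align*}
\nabla_T T &= \epsilon_2\kappa_1 F_2,\\
\nabla_T^2 T &= -\epsilon_1\epsilon_2\kappa_1^2 F_1+\epsilon_2\epsilon_3\kappa_1\kappa_2 F_3,\\
\nabla_T^3 T &= -\kappa_1 A\, F_2+\epsilon_2\epsilon_3\epsilon_4\kappa_1\kappa_2\kappa_3 F_4,
\end{align*}
and then, in two more steps, analogous expressions for $\nabla_T^4 T$ and $\nabla_T^5 T$ whose only nonzero components lie along $\{F_1,F_3,F_5\}$ and $\{F_2,F_4\}$ respectively (no $F_5$ component survives in $\nabla_T^5 T$ because $\nabla_T F_5=-\epsilon_4\kappa_4 F_4$).

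Next, I would simplify the two curvature terms using the space-form identity
\[
R(X,Y)Z=c\bigl(\langle Y,Z\rangle X-\langle X,Z\rangle Y\bigr).
\]
Since $\langle\nabla_T T,T\rangle=0$, $\langle\nabla_T^2 T,T\rangle=-\epsilon_2\kappa_1^2$ and $\langle\nabla_T^3 T,T\rangle=0$, this reduces the curvature contributions to
\[
R(\nabla_T^3 T,T)T=c\,\epsilon_1\,\nabla_T^3 T,\qquad R(\nabla_T^2 T,\nabla_T T)T=c\,\kappa_1^3\, F_2,
\]
both of which also lie in the span of $F_2$ and $F_4$. Consequently $\tau_3(\gamma)$ is a linear combination of $F_2$ and $F_4$ only, and triharmonicity is equivalent to the vanishing of its two scalar coefficients.

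Finally, I would collect the $F_2$ and $F_4$ coefficients, factor out $\kappa_1$ (respectively $\kappa_1\kappa_2\kappa_3$), and multiply through by suitable signs $\epsilon_j$ to recognize the stated form. The $F_2$ equation regroups as
\[
A^2+\epsilon_2\epsilon_4\kappa_2^2\kappa_3^2=c\,\epsilon_1\epsilon_2\bigl(2\epsilon_1\kappa_1^2+\epsilon_3\kappa_2^2\bigr),
\]
while the $F_4$ equation becomes
\[
\epsilon_2 A+\epsilon_4\bigl(\epsilon_3\kappa_3^2+\epsilon_5\kappa_4^2\bigr)=c\,\epsilon_1,
\]
which are precisely the two conditions stated in item (ii). Item (i) then follows from item (ii) by setting $\kappa_4=0$ (the vector $F_5$ is simply absent from the Frenet system for $n=4$). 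The main obstacle is purely clerical: carrying through five covariant differentiations while tracking all the signs $\epsilon_j$, and recognizing at the end the specific factorizations involving $A^2+\epsilon_2\epsilon_4\kappa_2^2\kappa_3^2$ that make the statement transparent.
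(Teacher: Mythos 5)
Your proposal is correct and follows essentially the same route as the paper: iterate the Frenet system to get $\nabla_T^3T$ and $\nabla_T^5T$, observe that together with the space-form curvature terms everything lands in ${\rm Span}\{F_2,F_4\}$, and set the two coefficients to zero. The only organizational difference is that the paper first records the tritension field in a general ambient manifold (Proposition~\ref{prop-n-Frenet-tri}, keeping the terms $R(F_i,F_j)F_k$ symbolic, with the case $n=5$ obtained by setting $\kappa_5=0$) and then substitutes \eqref{tensor-curvature-N(c)}, whereas you substitute the space-form identity from the start; the computation and the resulting sign bookkeeping are identical.
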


\begin{remark}
The Riemannian version of Theorem~\ref{cor-n-Frenet-tri-spaceforms} was first obtained by Maeta in \cite[Proposition~5.9]{MR3007953}. Maeta's result corresponds to the condition of Theorem~\ref{cor-n-Frenet-tri-spaceforms} with $\epsilon_j=1$ for all $j$.
\end{remark}
As a consequence of Theorem~\ref{cor-n-Frenet-tri-spaceforms} we are able to point out \textit{an important  difference with respect to the Riemannian case}. More precisely, we know (see \cite{MR3007953} and \cite[Theorem~5.1]{MR2}) that there exists no proper triharmonic $n$-Frenet helix in a Riemannian space form $N^n(c)$, $n \geq 4$. By contrast, as an application of Theorem~\ref{cor-n-Frenet-tri-spaceforms}, we have: 
\begin{example}\label{cor-controes-triharmonic}
There exists a space-like proper triharmonic $5$-Frenet helix in $\s^5_1$ with the following features:
\[
\epsilon_3=-1,\, \epsilon_j=1 \quad {\rm if}\,\,j\neq 3 \,;\quad \quad
\kappa_1= \kappa_2=\kappa_3=1,\,\, \kappa_4=\sqrt{2}\, \,.
\]
\end{example}
Also, with a similar analysis:
\begin{example}\label{cor-controes-triharmonic-bis}
There exists a space-like proper triharmonic $4$-Frenet helix in $\s^4_2$ with the following features:
\[
\epsilon_2=\epsilon_3=-1,\,\,\, \epsilon_1=\epsilon_4=1  \,;\quad \quad
\kappa_1=\sqrt{2},\,\, \kappa_2=2\,,\,\, \kappa_3=1\, \,.
\]
\end{example}
\vspace{3mm}

Now, we turn our attention to more general ambient spaces.
First, we establish the following result:
\begin{proposition}\label{Th-general-r-harmonicity-3-Frenet-helices}
Let $\gamma$ be a $3$-Frenet helix in any semi-Riemannian manifold $M^m_t$, where $m\geq 3$ and $1\leq t \leq m-1$. If the curvatures $\kappa, \tau$ of the helix $\gamma$ verify
\begin{equation}\label{eq-r-harm-general-helices-r>=4}
\epsilon_1 \kappa^2+\epsilon_3 \tau^2=0\,,
\end{equation}
then $\gamma$ is proper $r$-harmonic for all $r\geq 4$.
\end{proposition}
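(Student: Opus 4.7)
The plan is to show that the hypothesis \eqref{eq-r-harm-general-helices-r>=4} forces $\nabla^k_T T$ to vanish identically for every $k \geq 3$, after which every summand in the $r$-tension field formula \eqref{r-harmonicity-curves} disappears automatically once $r \geq 4$.

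First I would iterate the Frenet equations \eqref{Frenet-field-general-pseudo} starting from $F_1 = T$. Because $\gamma$ is a helix, $\kappa$ and $\tau$ are constants, and the computation gives in sequence $\nabla_T T = \epsilon_2 \kappa F_2$, then $\nabla^2_T T = -\epsilon_1 \epsilon_2 \kappa^2 F_1 + \epsilon_2 \epsilon_3 \kappa \tau F_3$, and finally
\[
\nabla^3_T T = -\kappa \bigl(\epsilon_1 \kappa^2 + \epsilon_3 \tau^2\bigr) F_2.
\]
This is the crux of the argument: the coefficient factoring out of $\nabla^3_T T$ is exactly the quantity that is assumed to vanish in \eqref{eq-r-harm-general-helices-r>=4}. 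Consequently $\nabla^3_T T = 0$, and a trivial induction (applying $\nabla_T$ repeatedly) yields $\nabla^k_T T = 0$ for every $k \geq 3$.

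Next I substitute into \eqref{r-harmonicity-curves} for $r \geq 4$. The leading term $\nabla^{2r-1}_T T$ vanishes since $2r-1 \geq 7 \geq 3$. For every $\ell$ in the range $0 \leq \ell \leq r-2$ the exponent $2r-3-\ell$ is at least $r-1 \geq 3$, so $\nabla^{2r-3-\ell}_T T = 0$ and the curvature terms vanish one by one, regardless of the form of $R$ in the ambient space $M^m_t$. Hence $\tau_r(\gamma) = 0$, so $\gamma$ is $r$-harmonic. Since $\kappa > 0$ implies $\nabla_T T = \epsilon_2 \kappa F_2 \neq 0$, the curve $\gamma$ is not a geodesic, so it is proper $r$-harmonic.

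Honestly there is no real obstacle here: the proof is a short direct computation, and the only insight needed is noticing the clean factorization of $\nabla^3_T T$. The one point to verify with care is the bound $2r-3-\ell \geq 3$ over the full range of $\ell$, which fails precisely at $r=3$ (giving $2r-3-\ell = 1$ when $\ell = r-2$) and therefore explains why the statement is restricted to $r \geq 4$; for $r=3$ there would be a residual curvature term of the form $R(\nabla_T T, \nabla_T T)T$ that one would need to analyze separately and which, in general non constant curvature ambients, need not vanish.
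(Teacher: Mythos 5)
Your proof is correct and follows essentially the same route as the paper: the paper invokes its Lemma~\ref{lemma-Branding-nablaelle-pseudo}, whose coefficients all carry a factor of $\epsilon_1\kappa^2+\epsilon_3\tau^2$ from $\nabla_T^3T$ onward, to conclude $\nabla_T^jT=0$ for $j\geq 3$ and then inspects \eqref{r-harmonicity-curves} exactly as you do. One small slip in your closing aside (not part of the proof itself): for $r=3$ and $\ell=r-2=1$ one gets $2r-3-\ell=2$, so the residual term is $R(\nabla^2_TT,\nabla_TT)T$ rather than $R(\nabla_TT,\nabla_TT)T$; the former is indeed the term that genuinely survives, consistent with your explanation of the restriction to $r\geq 4$.
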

To continue our investigation in ambient spaces with non constant sectional curvature we focus on the Lorentzian product space $\left ( \R \times N^{m-1}(c),g_{{\rm prod}}\right )$, where $\left (N^{m-1}(c),g \right )$ is a \textit{Riemannian} $(m-1)$-dimensional space form with $c \neq 0$, $m\geq 4$ and 
$$
g_{{\rm prod}}=-dt^2+g\,.
$$ 
In this context, we introduce a family of curves which will play an interesting role. More precisely, let $\alpha(s)$ be a $3$-Frenet helix in $N^{m-1}(c)$. Note that here $s$ is the arc length parameter in $N^{m-1}(c)$. Then we consider curves $\gamma$ in $\left ( \R \times N^{m-1}(c),g_{{\rm prod}}\right )$ of the type
\begin{equation}\label{eq-def-gamma}
\gamma(s)=(d_1\, s, \alpha (d_2 s))
\end{equation}
where, to insure that $\gamma$ is parametrized by the arc length, we require that the constants $d_1,d_2$ satisfy
\begin{equation}\label{eq-d}
\epsilon_1+d_1^2 >0 \quad {\rm and} \quad d_2=\sqrt{\epsilon_1+d_1^2}\,.
\end{equation}
Our main result in this setting is:
\begin{theorem}\label{Th-r-harm-eliche-product-result1}
Assume $c\neq 0, m \geq 4$. Let $\gamma$ be a $3$-Frenet curve in $\left ( \R \times N^{m-1}(c),g_{{\rm prod}}\right )$ of the type \eqref{eq-def-gamma} and assume that
\begin{equation}\label{eq-d-condizione}
\epsilon_3 \left (\tau_\alpha^2-\epsilon_1 \,d_1^2\,\kappa_\alpha^2 \right )>0\,,
\end{equation}
where $k_\alpha,\tau_\alpha$ are the curvatures of the helix $\alpha$ in $N^{m-1}(c)$.
Then $\gamma$ is a helix which does not verify \eqref{eq-r-harm-general-helices-r>=4}.
Moreover, for all $r\geq2$, $\gamma$ is proper $r$-harmonic in $\left ( \R \times N^{m-1}(c),g_{{\rm prod}}\right )$ if and only if $\alpha$ is proper $r$-harmonic in $N^{m-1}(c)$.
\end{theorem}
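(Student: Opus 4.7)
The plan is to push the entire computation down to the Riemannian factor by exploiting the product structure. On $(\R\times N^{m-1}(c), g_{\rm prod})$ the flatness of the $\R$-factor gives
\[
\nabla^{\rm prod}_{(X_1,X_2)}(Y_1,Y_2) = \bigl(X_1(Y_1)\,\partial_t,\, \nabla^N_{X_2}Y_2\bigr), \qquad R^{\rm prod}\bigl((\,\cdot\,,X_2),(\,\cdot\,,Y_2)\bigr)(\,\cdot\,,Z_2) = \bigl(0,\, R^N(X_2,Y_2)Z_2\bigr),
\]
with $\partial_t$ parallel. Using $u = d_2 s$ as the arc-length parameter of $\alpha$, write $T = (d_1\partial_t,\, d_2 T_\alpha)$; condition \eqref{eq-d} is precisely $\langle T,T\rangle = \epsilon_1$.

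An easy induction on $k\ge 1$ yields $\nabla_T^{k} T = \bigl(0,\; d_2^{k+1}(\nabla^N_{T_\alpha})^{k} T_\alpha\bigr)$: once the $\R$-component is killed at $k=1$ it stays zero, and each subsequent covariant derivative along $T$ brings down an extra factor of $d_2$ from $T_N = d_2 T_\alpha$. The case $k=1$ identifies $\kappa_\gamma = d_2^2 \kappa_\alpha$, $F_2 = (0, N_\alpha)$, $\epsilon_2 = 1$. Expanding $\epsilon_3 k_2 F_3 = \nabla_T F_2 + \epsilon_1 \kappa_\gamma T$ and using $\epsilon_1 d_2^2 - 1 = \epsilon_1 d_1^2$ produces
\[
\epsilon_3\, k_2^2 = d_2^2\bigl(\tau_\alpha^2 - \epsilon_1 d_1^2 \kappa_\alpha^2\bigr),
\]
which is strictly positive precisely under \eqref{eq-d-condizione}; hence $k_2 = \tau_\gamma$ is a well-defined positive constant and $\gamma$ is a $3$-Frenet helix. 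Summing the two expressions gives $\epsilon_1\kappa_\gamma^2 + \epsilon_3\tau_\gamma^2 = d_2^2(\kappa_\alpha^2 + \tau_\alpha^2) > 0$, ruling out \eqref{eq-r-harm-general-helices-r>=4}.

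For the $r$-harmonicity equivalence I would substitute the decomposition into \eqref{r-harmonicity-curves}. Because $R^{\rm prod}$ annihilates every $\partial_t$-slot, the $d_1\partial_t$-component of $T$ plays no role in the curvature sum; combined with the induction formula this gives, for every $0\le \ell\le r-2$,
\[
R^{\rm prod}\bigl(\nabla_T^{2r-3-\ell}T,\, \nabla_T^{\ell}T\bigr)T = \bigl(0,\, d_2^{2r}\, R^N\bigl((\nabla^N_{T_\alpha})^{2r-3-\ell}T_\alpha,\,(\nabla^N_{T_\alpha})^{\ell}T_\alpha\bigr)T_\alpha\bigr),
\]
and the leading term $\nabla_T^{2r-1}T$ carries the same uniform factor $d_2^{2r}$. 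Therefore $\tau_r(\gamma) = \bigl(0,\, d_2^{2r}\,\tau_r^N(\alpha)\bigr)$, where $\tau_r^N(\alpha)$ denotes the $r$-tension field of $\alpha$ in $N^{m-1}(c)$, and since $d_2 > 0$ the vanishing of one side is equivalent to the vanishing of the other. Properness transfers because $\kappa_\gamma = d_2^2\kappa_\alpha$ is nonzero iff $\kappa_\alpha$ is. The main obstacle is the bookkeeping in the Frenet identification, in particular checking that the sign of $\epsilon_3$ imposed a priori is compatible with the sign of $\tau_\alpha^2 - \epsilon_1 d_1^2\kappa_\alpha^2$; once \eqref{eq-d-condizione} is invoked this is exactly what makes $k_2>0$ well defined, after which the tension-field step is a direct unwinding of the product decomposition.
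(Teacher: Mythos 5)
Your proof is correct, and its second half takes a genuinely different route from the paper's. The first half (the Frenet identification) coincides with the paper's Lemma~\ref{lemma-gamma-elica}: you obtain the same $\kappa_\gamma=d_2^2\kappa_\alpha$, $\epsilon_3\tau_\gamma^2=d_2^2(\tau_\alpha^2-\epsilon_1 d_1^2\kappa_\alpha^2)$, and $\epsilon_1\kappa_\gamma^2+\epsilon_3\tau_\gamma^2=d_2^2(\kappa_\alpha^2+\tau_\alpha^2)>0$, which rules out \eqref{eq-r-harm-general-helices-r>=4}. Where you diverge is the $r$-harmonicity equivalence: the paper feeds the Frenet data of $\gamma$ into the ${\mathcal A}_\ell,{\mathcal B}_\ell,{\mathcal C}_\ell$ machinery of Lemma~\ref{lemma-Branding-nablaelle-pseudo}, splits into $r$ even and odd, computes the auxiliary scalar products $\langle T,T\rangle_g$ and $\langle T,B\rangle_g$, derives the explicit algebraic condition \eqref{r-harm-product} on $(\kappa_\gamma,\tau_\gamma)$, and only then, after substituting \eqref{eq-kappaetauproduct}, recognizes it as the known condition $(\kappa_\alpha^2+\tau_\alpha^2)^2=c\big((r-1)\kappa_\alpha^2+\tau_\alpha^2\big)$ for $r$-harmonic helices in a Riemannian space form, quoted from the literature. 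You instead prove the single identity $\tau_r(\gamma)(s)=d_2^{2r}\,\tau_r(\alpha)(d_2 s)$ directly from the product structure ($\partial_t$ parallel, the product curvature seeing only the $N$-projections, and the uniform $d_2^{k+1}$ scaling of the iterated derivatives — the exponent count $(2r-2-\ell)+(\ell+1)+1=2r$ checks out). This is shorter, avoids the even/odd case split and any appeal to the external classification of $r$-harmonic helices in $N^{m-1}(c)$, and would extend verbatim to an arbitrary Riemannian factor in place of $N^{m-1}(c)$. The only step you leave implicit — as does the paper — is that the Frenet system of $\gamma$ actually closes at order $3$, i.e.\ that $\nabla_T F_3$ has no component outside ${\rm Span}\{F_1,F_2,F_3\}$; this is immediate here because the $N$-projection of $B$ is a constant-coefficient combination of $T_\alpha$ and $B_\alpha$, so $\nabla_T B$ is a multiple of $N_\alpha=F_2$.
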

As an immediate consequence of Theorem~\ref{Th-r-harm-eliche-product-result1} we have the following explicit construction of $r$-harmonic $3$-Frenet curves in $\left ( \R \times N^{m-1}(c),g_{{\rm prod}}\right )$.
\begin{corollary}
Let $\alpha(s)$ be a proper $r$-harmonic $3$-Frenet helix in $N^{m-1}(c)$, $r \geq 2$. Then, choosing $d_1,d_2\neq 0$ such that $d_1^2=d_2^2+1$, the curve $\gamma(s)$ defined in \eqref{eq-def-gamma} is a time-like, proper $r$-harmonic helix in $\left ( \R \times N^{m-1}(c),g_{{\rm prod}}\right )$.
\end{corollary}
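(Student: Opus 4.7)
This corollary is essentially a direct application of Theorem~\ref{Th-r-harm-eliche-product-result1}, so the plan is simply to verify its hypotheses for the prescribed choice of $d_1, d_2$ and the given helix $\alpha$, and then invoke it.

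First I would check the causal character of $\gamma$. Using the product structure and the fact that $\alpha$ is arc-length parametrized in $N^{m-1}(c)$, one computes
\[
g_{\rm prod}(\gamma',\gamma') = -d_1^2 + d_2^2 = -1,
\]
thanks to the hypothesis $d_1^2 = d_2^2 + 1$. Hence $\gamma$ is time-like, so $\epsilon_1 = -1$, and the compatibility relation \eqref{eq-d} is immediate: $\epsilon_1 + d_1^2 = d_2^2 > 0$, and we may take $d_2 > 0$ so that $d_2 = \sqrt{\epsilon_1 + d_1^2}$ (reversing the orientation of $\alpha$ if necessary).

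Next, I would verify condition \eqref{eq-d-condizione}, which requires knowing the sign $\epsilon_3$. Since $\partial_t$ is parallel in the product, a short computation gives $F_2 = (0, N_\alpha)$ with $\epsilon_2 = +1$. Using the Frenet equations of $\alpha$ in $N^{m-1}(c)$, one then checks that the squared norm of $\nabla_T F_2 + \epsilon_1 k_1 F_1$ equals $d_2^2(d_1^2 \kappa_\alpha^2 + \tau_\alpha^2)$, which is strictly positive. Therefore $\epsilon_3 = +1$, and \eqref{eq-d-condizione} reduces to $\tau_\alpha^2 + d_1^2 \kappa_\alpha^2 > 0$, which is automatic since $\kappa_\alpha > 0$ and $d_1 \neq 0$.

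With all hypotheses in place, Theorem~\ref{Th-r-harm-eliche-product-result1} ensures that $\gamma$ is a helix and is proper $r$-harmonic in $\left(\R \times N^{m-1}(c),g_{\rm prod}\right)$ if and only if $\alpha$ is proper $r$-harmonic in $N^{m-1}(c)$. The latter holds by assumption, so the conclusion follows. There is no substantive obstacle here: the only mild calculation is the identification $\epsilon_3 = +1$, which is forced by the fact that the non-trivial motion of $\gamma$ in the fiber $N^{m-1}(c)$ takes place entirely in the space-like Riemannian factor.
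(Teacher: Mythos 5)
Your proof is correct and follows exactly the route the paper intends: the corollary is stated there as an immediate consequence of Theorem~\ref{Th-r-harm-eliche-product-result1}, and your verification of \eqref{eq-d} and of \eqref{eq-d-condizione} (via the forced sign $\epsilon_3=+1$, which indeed follows since $\epsilon_1=-1$ and $\epsilon_2=+1$ already exhaust the single negative direction of the Lorentzian product) is precisely the checking left implicit. Nothing further is needed.
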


In the last section of the paper we shall analyse another important case where the sectional curvature of the ambient space is not constant. More precisely, let $J$ be a real open interval. Then a \textit{Robertson-Walker space-time} is a semi-Riemannian manifold $\mathcal{RW}^{m}_1 =\left ( J \times N^{m-1}(c),g_f\right )$, where  $g_f$ is a Lorentzian metric  defined by
\begin{equation}\label{eq-gf-walker}
g_f = -dt^2 + f^2(t) \, g\,,
\end{equation}
where $f$ is a smooth positive function on $J$. These space-times, which are largely used in theoretical physics and in the study of cosmological phenomena, appeared for the first time in the context of harmonic maps in \cite{MR1035340}. 

Our main results in this context are:
\begin{theorem}\label{biharmonic-cond-Robertson-Walker}
Let $\alpha(s), s \in I$, be a geodesic in $N^{m-1}(c)$ and consider a curve $\gamma : I \to \mathcal{RW}^{m}_1$, $\gamma(s) = (t_0, \alpha(d\,s))$, where $t_0$ is any fixed real value in the interval $J$ and $d=1/f(t_0)$. Assume that $r \geq 2$. Then $\gamma(s)$ is a proper $r$-harmonic curve in $\mathcal{RW}^{m}_1$ if and only if
\[
f'(t_0) \neq 0\qquad{\rm and}\qquad  f'^2(t_0) + (r-1)f(t_0)f''(t_0) = 0\,.
\]

\end{theorem}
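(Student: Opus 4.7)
The plan is to exploit the warped-product structure of $\mathcal{RW}^m_1=(J\times N^{m-1}(c),\,-dt^2+f^2(t)\,g)$ in order to compute every iterated covariant derivative $\nabla^k_T T$ along $\gamma$ in closed form, and then substitute directly into the $r$-harmonicity equation \eqref{r-harmonicity-curves}.

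First, since the $t$-component of $\gamma$ is constantly equal to $t_0$ and $g(\alpha',\alpha')=1$, one checks $g_f(\gamma',\gamma')=f^2(t_0)\,d^2=1$, so $T=\gamma'$ is unit space-like and purely vertical, i.e.\ tangent to the fibre $\{t_0\}\times N^{m-1}(c)$. Using either the O'Neill warped-product connection formulas or the Christoffel symbols $\Gamma^0_{ij}=f f'\tilde g_{ij}$ and $\Gamma^i_{0j}=(f'/f)\delta^i_j$ of $g_f$, together with the fact that $\alpha$ is a geodesic in $N^{m-1}(c)$, one obtains the two key identities
\begin{equation*}
\nabla_T T=\lambda\,\partial_t,\qquad \nabla_T\partial_t=\lambda\, T,\qquad \lambda:=\frac{f'(t_0)}{f(t_0)}.
\end{equation*}
Since $\lambda$ is a constant scalar, an immediate induction yields
\begin{equation*}
\nabla^{2k}_T T=\lambda^{2k}\,T,\qquad \nabla^{2k+1}_T T=\lambda^{2k+1}\,\partial_t,\qquad k\geq 0.
\end{equation*}

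Next, the warped-product curvature formula $R(X,V)W=-\bigl(g_f(V,W)/f\bigr)\nabla^B_X(\grad^B f)$, valid for $X$ horizontal and $V,W$ vertical, together with $\grad^B f=-f'\partial_t$ and $\nabla^B_{\partial_t}\partial_t=0$ (the base $g_B=-dt^2$ is flat), gives
\begin{equation*}
R(\partial_t,T)T=\frac{f''(t_0)}{f(t_0)}\,\partial_t.
\end{equation*}

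Finally, I substitute into \eqref{r-harmonicity-curves}. For every $0\leq \ell\leq r-2$ the indices $\ell$ and $2r-3-\ell$ have opposite parity, so exactly one of $\nabla^\ell_T T,\,\nabla^{2r-3-\ell}_T T$ is a multiple of $T$ and the other of $\partial_t$; by the skew-symmetry $R(T,\partial_t)=-R(\partial_t,T)$, the sign $(-1)^\ell$ precisely cancels the swap, so each of the $r-1$ curvature summands contributes the same vector $\lambda^{2r-3}\bigl(f''(t_0)/f(t_0)\bigr)\partial_t$. Combined with $\nabla^{2r-1}_T T=\lambda^{2r-1}\partial_t$ this collapses to
\begin{equation*}
\tau_r(\gamma)=\lambda^{2r-3}\Bigl[\lambda^2+(r-1)\,\frac{f''(t_0)}{f(t_0)}\Bigr]\partial_t.
\end{equation*}
Since $\gamma$ is a geodesic exactly when $\lambda=0$, i.e.\ $f'(t_0)=0$, the curve is \textit{proper} $r$-harmonic if and only if $f'(t_0)\neq 0$ and the bracketed factor vanishes; after multiplication by $f(t_0)^2$ this is precisely $f'^2(t_0)+(r-1)f(t_0)f''(t_0)=0$. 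The only delicate point is keeping signs straight in the semi-Riemannian warped product (in particular $\grad^B f=-f'\partial_t$ because of the $-dt^2$); once $\nabla_T T$, $\nabla_T\partial_t$ and $R(\partial_t,T)T$ are correctly identified, the rest is an algebraic collapse that is entirely independent of the sectional curvature $c$ and of the dimension of the fibre.
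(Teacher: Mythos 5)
Your proposal is correct and follows essentially the same route as the paper: the paper first shows $\gamma$ is a $2$-Frenet helix with $N=\pm\partial_t$ and $\kappa=|f'(t_0)/f(t_0)|$ and then applies its Proposition~\ref{tension-field-Robertson-Walker-Npartial}, which rests on exactly your two identities $\nabla_T T=\lambda\,\partial_t$, $R(\partial_t,T)T=(f''/f)\,\partial_t$ and the same even/odd collapse of the curvature sum. Your version merely bypasses the Frenet-frame bookkeeping by working directly with $\partial_t$ in place of $N$, which is a harmless (and slightly cleaner) repackaging of the identical computation.
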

\begin{theorem}\label{Th-2-RW}
Let $\gamma : I \to \mathcal{RW}^{m}_1$ be a curve parametrized by the arc length of the form $\gamma(s) = (t_0, \alpha(s))$, where $t_0$ is a fixed value on $J$ such that $f'(t_0)=0$. Then $\gamma(s)$ is $r$-harmonic in $\mathcal{RW}^{m}_1$ if and only if $\beta(s)=\alpha(f(t_0)\,s)$ is $r$-harmonic in $N^{m-1}(c)$.
\end{theorem}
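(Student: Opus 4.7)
The plan is to exploit the warped-product structure in order to identify the $r$-tension field of $\gamma$ with a scaled copy of that of a suitable reparametrization $\beta$ in $(N^{m-1}(c),g)$. The two key ingredients are: the slice $\Sigma=\{t_0\}\times N^{m-1}(c)$ is totally geodesic in $\mathcal{RW}^m_1$ precisely because $f'(t_0)=0$; and the ambient curvature, restricted to vectors tangent to $\Sigma$ at $t_0$, coincides with the intrinsic curvature of $(N^{m-1}(c),g)$.

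First I would use the Koszul formula for the warped metric $-dt^2+f^2 g$ to establish that, for $X,Y$ lifted from $N^{m-1}(c)$,
\[
\nabla^{\mathcal{RW}}_X Y \;=\; \nabla^N_X Y + f(t)\,f'(t)\,g(X,Y)\,\partial_t\,.
\]
At $t=t_0$ the $\partial_t$-component vanishes, so the second fundamental form of $\Sigma$ is identically zero and $\Sigma$ is totally geodesic. Consequently, since $\gamma'(s)=(0,\alpha'(s))\in T\Sigma$, an induction on $k$ yields $\nabla^{\mathcal{RW},k}_T T = \nabla^{\Sigma,k}_T T \in T\Sigma$ for every $k\geq 0$. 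Next, the standard curvature formula for warped products gives, for $X,Y,Z$ tangent to $N^{m-1}(c)$,
\[
R^{\mathcal{RW}}(X,Y)Z \;=\; R^N(X,Y)Z \,+\, f'(t)^2\,\bigl[g(X,Z)Y-g(Y,Z)X\bigr]\,,
\]
which at $t=t_0$ collapses to $R^N(X,Y)Z$; the latter coincides with $R^\Sigma(X,Y)Z$ since constant rescaling $g\leadsto f(t_0)^2 g$ leaves the $(1,3)$-curvature tensor unchanged.

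Substituting the previous two identities into \eqref{r-harmonicity-curves}, $\tau_r(\gamma)(s)$ is computed as a vector in $T_{\alpha(s)}N^{m-1}(c)$ whose expression is exactly the right-hand side of \eqref{r-harmonicity-curves} applied to $\alpha$ with respect to $\nabla^N$ and $R^N$. However, $\alpha$ itself is not unit-speed in $(N^{m-1}(c),g)$: the normalization $g_f(\gamma',\gamma')=1$ forces $|\alpha'|_g=1/f(t_0)$. Setting $\lambda=f(t_0)$, the reparametrization $\beta(u)=\alpha(\lambda u)$ is unit-speed in $(N^{m-1}(c),g)$, and a short induction using the chain rule produces
\[
\nabla^{N,k}_{\beta'}\beta'(u) \;=\; \lambda^{k+1}\,\nabla^{N,k}_{\alpha'}\alpha'(\lambda u)\,.
\]
Plugging this relation into the expression for $\tau_r(\gamma)$ and counting the $\lambda$ exponents in every term shows that they collect into a single overall factor $\lambda^{-2r}$, yielding $\tau_r(\gamma)(s)=\lambda^{-2r}\,\tau_r(\beta)(s/\lambda)$ and hence the claimed equivalence.

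The main obstacle is the scaling bookkeeping of the last step: one must verify that, for each index $\ell$, the factors of $\lambda$ coming from the two arguments of $R^N$ and from the final $\alpha'$ add up to the same exponent $-2r$, independently of $\ell$ and of $r$. A minor but delicate technical point is fixing the sign conventions in the Koszul and curvature formulas for the Lorentzian warped product (where $g_B(\partial_t,\partial_t)=-1$, so $g_B(\nabla f,\nabla f)=-f'(t)^2$) so that the totally geodesic conclusion and the curvature reduction both hold correctly at $t_0$.
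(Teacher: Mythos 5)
Your argument is correct and follows essentially the same route as the paper: the vanishing of $f'(t_0)$ makes the slice totally geodesic (so all iterated covariant derivatives of $T$ reduce to intrinsic ones), the ambient curvature restricted to the slice reduces to $R^{N^{m-1}(c)}$, and the reparametrization $\beta(s)=\alpha(f(t_0)s)$ produces the scaling $\nabla^j_T T=f(t_0)^{-(j+1)}\widetilde{\nabla}^j_{T_\beta}T_\beta$, which yields the uniform factor $f(t_0)^{-2r}$ in $\tau_r$. (The sign you wrote on the $f'(t)^2$ correction in the warped-product curvature is opposite to the one in the paper's Proposition, but this is immaterial since that term is evaluated at $t_0$ where $f'(t_0)=0$.)
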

As a final remark, we recall that the quantity
\[
q=- \frac{f(t)f''(t)}{f'^2(t)}
\]
is called the \textit{deceleration parameter} of the Robertson-Walker space-time $\mathcal{RW}^{m}_1$. Relevant cases of physical interest are those corresponding to constant positive values of $q$ which are achieved by taking 
\[
f(t)=t^\lambda\,, \quad 0<\lambda<1
\]
which gives
\[
q=-\frac{\lambda-1}{\lambda}\,.
\] 
Taking $\lambda=(r-1)/r$, as an immediate consequence of Theorem~\ref{biharmonic-cond-Robertson-Walker} we have:
\begin{corollary}\label{Cor-RW} Let us fix $r\geq2$ and consider a Robertson-Walker space-time $\mathcal{RW}^{m}_1$ with constant positive deceleration parameter 
\[
q=\frac{1}{r-1} \quad {\rm i.e.,} \,\, f(t)=t^{\frac{r-1}{r}}\,\,.
\]
Let $\alpha(s), s \in I$, be a geodesic in $N^{m-1}(c)$ and consider any curve $\gamma : I \longrightarrow \mathcal{RW}^{m}_1$ of the type $\gamma(s) = (t_0, \alpha(s/f(t_0)))$, where $t_0$ is any positive fixed real value. Then $\gamma$ is proper $r$-harmonic.
\end{corollary}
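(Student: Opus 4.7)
The plan is to apply Theorem~\ref{biharmonic-cond-Robertson-Walker} directly, since the parametrization of $\gamma$ prescribed in the corollary matches verbatim the form $\gamma(s)=(t_0,\alpha(ds))$ with $d=1/f(t_0)$ treated in that theorem, and $\alpha$ is assumed to be a geodesic in $N^{m-1}(c)$. Thus, the only task remaining is to verify, for $f(t)=t^{(r-1)/r}$ and for an arbitrary $t_0>0$, the two conditions
\[
f'(t_0)\neq 0 \qquad\text{and}\qquad f'^{2}(t_0)+(r-1)\,f(t_0)\,f''(t_0)=0.
\]

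The first step is a routine differentiation: one obtains $f'(t)=\tfrac{r-1}{r}\,t^{-1/r}$ and $f''(t)=-\tfrac{r-1}{r^{2}}\,t^{-(r+1)/r}$. Since $r\geq 2$ and $t_0>0$, the quantity $f'(t_0)=\tfrac{r-1}{r}\,t_0^{-1/r}$ is strictly positive, so the first condition is immediate. The second step is an algebraic substitution: one computes
\[
f'^{2}(t_0)=\tfrac{(r-1)^{2}}{r^{2}}\,t_0^{-2/r},\qquad (r-1)\,f(t_0)\,f''(t_0)=-\tfrac{(r-1)^{2}}{r^{2}}\,t_0^{(r-1)/r-(r+1)/r}=-\tfrac{(r-1)^{2}}{r^{2}}\,t_0^{-2/r},
\]
and the two summands cancel identically in $t_0$. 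Hence both hypotheses of Theorem~\ref{biharmonic-cond-Robertson-Walker} are satisfied, and $\gamma$ is a proper $r$-harmonic curve in $\mathcal{RW}^{m}_{1}$.

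No genuine obstacle is anticipated; the entire content is the algebraic identity linking the exponent $\lambda=(r-1)/r$ to the ODE $f'^{2}+(r-1)ff''=0$. Conceptually, this identity can be seen as saying that $f^{r/(r-1)}$ is an affine (in fact, linear) function of $t$, since $\big(f^{r/(r-1)}\big)''=0$ is easily checked to be equivalent to $f'^{2}+(r-1)ff''=0$. From this viewpoint the choice $f(t)=t^{(r-1)/r}$ is exactly the unique power law with this property, which explains its relevance as the critical deceleration parameter $q=1/(r-1)$ singled out in the statement.
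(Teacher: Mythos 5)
Your proposal is correct and is exactly the argument the paper has in mind: the corollary is stated as an immediate consequence of Theorem~\ref{biharmonic-cond-Robertson-Walker}, and your verification that $f(t)=t^{(r-1)/r}$ satisfies $f'(t_0)\neq 0$ and $f'^2(t_0)+(r-1)f(t_0)f''(t_0)=0$ for every $t_0>0$ is the whole content. The closing observation that this ODE is equivalent to $\big(f^{r/(r-1)}\big)''=0$ is a nice (correct) aside, though not needed.
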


Our paper is organized as follows. 

In order to make this article reasonably self-contained we have recalled in Section~\ref{Sec-preliminaries} some relevant preliminary notions. In Section~\ref{Sec-2-Frenet} we shall prove the stated results concerning $2$-Frenet curves in space forms.
In Section~\ref{Sec-curva-curvatura-non-costante} we shall prove the existence of a proper triharmonic curve with non constant curvature $k(s)$ in a ruled Lorentzian surface $S_\gamma$, as stated in Theorem~\ref{Th-trihar-curve-k-nonconstant}.
In Section~\ref{Sec-3-Frenet-curves} we shall focus on $3$-Frenet curves and  we shall prove the corresponding results stated in this introduction.
Section~\ref{Sec-n-Frenet} contains the proofs of the stated results concerning $n$-Frenet curves, $n \geq 4$. 
In Section~\ref{sec-Lorentzian-product} we shall prove Proposition~\ref{Th-general-r-harmonicity-3-Frenet-helices} and Theorem~\ref{Th-r-harm-eliche-product-result1}.
Last, in Section~\ref{Sec-Rob-Walker} we shall prove the stated results concerning $r$-harmonic curves in Robertson-Walker space-times. 

\section{Preliminaries}\label{Sec-preliminaries}

A basic reference for semi-Riemannian geometry is the classical book of O'Neill (see \cite{Neill}), but for the specific topics treated in this paper we also refer to \cite{MR4552081, Dong, Liu, MR3198740, Sasahara}.

Let $(M^m_t,g)$ be a semi-Riemannian manifold of dimension
$m$ with a \textit{non-degenerate} metric of \textit{index} $t$. Since in this paper we are not interested in the Riemannian case we shall always assume that $1 \leq t \leq m-1$. We recall that non-degenerate means that the only vector $X \in T_pM$ satisfying $g_p(X, Y ) = 0$ for all $Y \in T_pM$ 
is $X = 0$, for any $p \in M $.  A local \textit{orthonormal} frame field
of $(M^m_t,g)$ is a set of local vector fields $\{e_i\}_{i=1}^m$ such that $g(e_i, e_j) = \varepsilon_i \delta_{ij}$, with
$\varepsilon_1=\ldots \varepsilon_t=-1$, $\varepsilon_{t+1}=\ldots \varepsilon_m=1$.

Note that we shall write
\[
||X||=\sqrt{|g(X,X)|}
\]
for any vector field $X$ tangent to $M$. Also, the notation $\langle X,Y \rangle$ for $g(X,Y)$ will be used throughout this paper.

In this article we shall focus primarily on the case that the ambient space is a semi-Riemannian space form. 
Therefore, we now fix terminology and notations concerning these ambient spaces.

First, the $m$-dimensional pseudo-Euclidean space with index $t$ is denoted by $\R^m_t=(\R^m,\langle,\rangle)$, where 
\[
\langle x,y \rangle= -\sum_{i=1}^t x_i y_i +\sum_{i=t+1}^m x_i y_i \,.
\]
The $m$-dimensional semi-Riemannian sphere, denoted by $\s^m_t (c)$ is defined as follows:
\begin{equation}\label{eq-def-pseudosfere}
\s^m_t (c)= \left \{x \in \R^{m+1}_t \colon \langle x,x \rangle = \frac{1}{c} \right \} \quad \quad (c>0)\,.
\end{equation}
$\s^m_t (c)$, with the induced metric from $\R^{m+1}_t$, is a complete semi-Riemannian manifold with index $t$ and constant positive sectional curvature $c$.

The $m$-dimensional semi-Riemannian hyperbolic space, denoted by $\h ^m_t (c)$, is defined by
\begin{equation}\label{eq-def-pseudohyperbolic}
\h^m_t (c)= \left \{x \in \R^{m+1}_{t+1} \colon \langle x,x \rangle =\frac{1}{c} \right \} \quad \quad (c<0)\,.
\end{equation}
$\h^m_t (c)$, with the induced metric from $\R^{m+1}_{t+1}$, is a complete semi-Riemannian manifold with index $t$ and constant negative sectional curvature $c$.

A semi-Riemannian space form $N^m_t(c)$ refers to one of the three spaces $\R^m_t$, $\s^m_t (c),\h^m_t (c)$. We shall write $\s^m_t $ and $\h^m_t $ for $\s^m_t (1)$ and $\h^m_t (-1)$ respectively.
 
The flat ($c=0$) semi-Riemannian space $\R^m_t$ is called \textit{Minkowski space}, while $\s^m_t (c)$ and $\h^m_t (c)$ are known as \textit{de Sitter space} and \textit{anti-de Sitter space} respectively. When the index is $t=1$, these spaces are also referred to as \textit{Lorentz space forms}. We also point out that $\s^m_t (c)$ is diffeomorphic to $\R^t \times \s^{m-t}$, while $\h^m_t (c)$ is diffeomorphic to $\s^{t} \times \R^{m-t}$. In particular, $\s^m_{m-1} (c)$ and $\h^m_1 (c)$ are not simply connected. 

In this paper we shall adopt the following notation and sign convention for the Riemannian curvature tensor field $R$ of the ambient space $M_t^m$:
\[
\begin{aligned}
 &{R}(X,Y)Z={\nabla}_{X}{\nabla}_{Y}Z
-{\nabla}_{Y}{\nabla}_{X}Z-{\nabla}_{[X,Y]}Z\,, \\
&{R}(X,Y,Z,W)=\langle {R}(X,Y)W,Z \rangle \,.
\end{aligned}
\]
In particular, the sectional curvature tensor field of $N^m_t(c)$ is described by the following simple expression:
\begin{equation}\label{tensor-curvature-N(c)}
R^{N(c)}(X,Y)Z=c\, \big(\langle Y,Z \rangle X-\langle X,Z \rangle Y \big) \quad \quad  \forall \,X,Y,Z \in C(TN(c)) \,.
\end{equation}

In this paper we shall establish several algebraic conditions which are equivalent to the $r$-harmonicity of an $n$-Frenet helix. In the case of curves with non constant curvatures, algebraic conditions will be replaced by a system of differential equations. A common feature to all these cases is the dependence on the $\epsilon_i$'s associated to the Frenet frame. 

Therefore, it is vital for us to know that our results are supported by the following semi-Riemannian version of the Fundamental Theorem of curves. More specifically, any time that we prove that certain curvature conditions associated with appropriate $\epsilon_i$'s verify the $r$-harmonicity equation we shall be able to conclude that there exists a curve with those requisites.

\begin{theorem}[The Fundamental Theorem of Curves in Semi-Riemannian Geometry]\label{Th-Fund-curves}
Let $M=(M^m_t,\langle, \rangle)$ be an $m$-dimensional semi-Riemannian manifold of index $1 \leq t \leq m-1$. Given smooth, positive curvature functions $k_1(s),\ldots,k_{n-1}(s)$ defined on an open real neighbourhood of $s_0\in \R$, an initial point $p_0\in M$ and an initial orthonormal frame $\{T_0,F_{2,0},\ldots,F_{n,0}\}$ at $p_0$ there exists an $n$-Frenet curve $\gamma(s)$ in $M$ parametrized by the arc length $s$ such that
\begin{itemize}
\item [(i)] $\gamma(s_0)=p_0$;
\item [(ii)] The $n$-Frenet frame field $\{T,F_{2},\ldots,F_{n}\}$ of $\gamma$ coincides with 

\noindent $\{T_0,F_{2,0},\ldots,F_{n,0}\}$ at $s=s_0$;
\item [(iii)] The given functions $k_1(s),\ldots,k_{n-1}(s)$ are the curvatures of $\gamma(s)$.
\end{itemize}
\end{theorem}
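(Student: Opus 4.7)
The plan is to reduce the existence statement to a standard ODE existence theorem applied along a local chart, and then to verify that the prescribed orthonormality relations and curvature values are preserved by the solution. More precisely, let $\epsilon_i = \langle F_{i,0}, F_{i,0}\rangle \in \{\pm 1\}$ for $1 \leq i \leq n$, choose a coordinate chart $(U, x^1, \dots, x^m)$ around $p_0 \in M$, and consider the unknowns $\gamma(s)$ (a curve in $U$) together with vector fields $F_1(s), \dots, F_n(s)$ along $\gamma$. Writing $F_i(s) = \sum_\alpha F_i^\alpha(s)\, \partial_{x^\alpha}|_{\gamma(s)}$, the Frenet system \eqref{Frenet-field-general-pseudo} together with $\gamma'(s) = F_1(s)$ becomes a first-order ODE system in the scalar unknowns $\gamma^\alpha(s), F_i^\alpha(s)$, with coefficients depending smoothly on $s$ (through the prescribed $k_i(s)$) and on $\gamma(s)$ (through the Christoffel symbols). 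Standard ODE existence and uniqueness therefore yields a smooth solution on an open neighbourhood of $s_0$ satisfying the initial conditions $\gamma(s_0) = p_0$, $F_i(s_0) = F_{i,0}$.

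Next I would verify that the orthonormality relations $\langle F_i(s), F_j(s)\rangle = \epsilon_i \delta_{ij}$ hold for all $s$. Let $G(s)$ denote the symmetric $n \times n$ matrix with entries $G_{ij}(s) = \langle F_i(s), F_j(s)\rangle$, and let $\Omega = \Omega(s)$ be the coefficient matrix from Remark~\ref{remark-not-skew}. Differentiating along $\gamma$ and using the Frenet system gives the linear matrix ODE
\begin{equation*}
G'(s) = \Omega(s)\, G(s) + G(s)\, \Omega(s)^{T}\,.
\end{equation*}
A direct check using the explicit entries of $\Omega$ shows that the constant diagonal matrix $E = \operatorname{diag}(\epsilon_1, \dots, \epsilon_n)$ is a solution: for consecutive indices one computes $(\Omega E + E \Omega^T)_{i,i+1} = \epsilon_{i+1} k_i \cdot \epsilon_{i+1} + \epsilon_i \cdot (-\epsilon_i k_i) = k_i - k_i = 0$, while all other entries vanish trivially. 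Since $G(s_0) = E$ by construction, uniqueness for linear ODEs forces $G(s) \equiv E$, which is exactly the required orthonormality. This part, i.e.\ keeping track of the signs when $\Omega$ is not skew-symmetric, is the main technical point specific to the semi-Riemannian setting and is where the Riemannian argument needs adjustment.

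Finally I would check that $\gamma$ is genuinely an $n$-Frenet curve with the prescribed data. Parametrization by arc length with $\epsilon_1 = \langle T_0, T_0\rangle$ follows from $G_{11}(s) = \epsilon_1$; the frame $\{F_1(s), \dots, F_n(s)\}$ is orthonormal with the correct causal characters by the previous step; the first curvature is recovered from $\nabla_T F_1 = \epsilon_2 k_1(s) F_2$ by taking inner product with $F_2$, which gives $\langle \nabla_T F_1, F_2\rangle = \epsilon_2^2 k_1(s) = k_1(s)$, and similarly each $k_i(s)$ is recovered from $\epsilon_{i+1}\langle \nabla_T F_i, F_{i+1}\rangle = k_i(s)$. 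Uniqueness of the resulting curve (given the data) follows from uniqueness of the underlying ODE system, although this is not required by the statement. The only obstacle worth flagging is that positivity of the prescribed $k_i(s)$ is used only implicitly, to match Definition~\ref{def-helices-order-n}; the construction itself works for arbitrary smooth $k_i$, and the case $n = m$ with a possibly sign-changing $k_{n-1}$ mentioned in that definition is handled by choosing $F_{n,0}$ so that $\{F_{1,0}, \dots, F_{n,0}\}$ is positively oriented and invoking the ODE argument on an interval where the solution remains in a single coordinate chart.
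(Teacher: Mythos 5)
Your proposal is correct and follows essentially the same route as the paper's Appendix~A proof: reduce the Frenet system to a first-order ODE in a chart, solve with the given initial data, and then show the Gram matrix $G_{ij}=\langle F_i,F_j\rangle$ stays equal to $\operatorname{diag}(\epsilon_1,\dots,\epsilon_n)$ by exhibiting the latter as a solution of the induced linear ODE and invoking uniqueness (the paper writes this as the auxiliary system of $n(n+1)/2$ scalar equations, you write it as the single matrix equation $G'=\Omega G+G\Omega^{T}$ — the same argument in more compact form). The only blemish is the cosmetic slip $\epsilon_{i+1}\langle\nabla_T F_i,F_{i+1}\rangle=k_i$ near the end, which should read $\langle\nabla_T F_i,F_{i+1}\rangle=\epsilon_{i+1}^2 k_i=k_i$; this does not affect the argument.
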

This result can be obtained adapting the classical argument of the Riemannian case (see \cite{Spivak}) but, since we were not able to find a precise reference in the literature, for the sake of completeness we have added its proof in Appendix~\ref{appendix} at the end of the paper.
\begin{remark} 
The construction of the Frenet frame field $\left \{F_1,\ldots,F_n \right \}$ is based on the orthonormalization of the set of vector fields 
\[
\left \{\nabla_T^0T=T,\nabla_T T,\ldots,\nabla_T^{n-1}T \right \}.
\] 

Here we point out that the correct generalization of the Gram-Schmidt algorithm in a semi-Riemannian context uses the idea of a {\em non-degenerate basis}. Indeed, if $V$ is an $n$-dimensional vector space endowed with a non-degenerate scalar product $g$, an ordered basis $\left \{E_1,\ldots,E_n\right\}$
 for $V$
 is said to be non-degenerate if, for each $k=1,\ldots,n$, the scalar product $g$ restricts to a non-degenerate scalar product on the subspace spanned by $\left \{E_1,\ldots,E_k\right\}$.

Now, if $V$ has an ordered non-degenerate basis $\left \{v_1,\ldots,v_n\right\}$, then the Gram-Schmidt algorithm applied to $\left \{v_1,\ldots,v_n\right\}$ produces an orthonormal basis $\left \{E_1,\ldots,E_n\right\}$ with the property that ${\rm span}\left \{E_1,\ldots,E_k\right\}={\rm span}\left \{v_1,\ldots,v_k\right\}$ for each $k=1,\ldots,n$.
\end{remark}

\section{$r$-harmonic $2$-Frenet curves}\label{Sec-2-Frenet} 
Here we prove the results stated in Section~\ref{Sec-Intro} concerning $2$-Frenet curves in semi-Riemannian space forms.

As a preliminary work, we study \textit{nonnull} $r$-harmonic curves in a semi-Riemannian surface $(M^2_1, \langle,\rangle)$. Let $\gamma:I \to (M^2_1, \langle,\rangle)$ be a \textit{non-degenerate curve}, i.e., a curve such that
\[
\langle \nabla_T T,\nabla_T T\rangle \neq 0
\]
on $I$. Then we can define
\[
\epsilon_2={\rm sign}\left ( \langle \nabla_T T,\nabla_T T\rangle \right)(=\pm 1) 
\]
and the principal unit normal vector field $N$ together with its associated positive curvature function $k(s)$:
\[
\nabla_T T=\epsilon_2 k(s) N \,.
\]
Then $\gamma$ is a $2$-Frenet curve and we have the Frenet system of equations
\[
\begin{cases}
\nabla_T T=\epsilon_2 \,k(s) \,N \\
\nabla_T N=-\epsilon_1\,k(s) \,T
\end{cases}
\]
where $\langle T,T\rangle=\epsilon_1$, $\langle N,N\rangle=\epsilon_2$, $\epsilon_1 \epsilon_2=-1$. 
In this context we also have (see \cite{Neill})
\[
K_M = \frac{\langle R(T, N)N, T \rangle}{\epsilon_1 \, \epsilon_2}= - \langle R(T, N)N, T \rangle\,,
\]
where $K_M$ denotes the Gaussian curvature of the  semi-Riemannian surface $(M^2_1, \langle,\rangle)$. A first routine computation leads us to the following proposition.
\begin{proposition}\label{pro-general-curves-surfaces}
Let $\gamma$ be a non-degenerate curve parametrized by the arc length $s$ into an $m$-dimensional semi-Riemannian manifold $(M_t^m, \langle\, , \,\rangle)$. Then its bitension and tritension fields are given respectively by:

\begin{equation}\label{bitension-2-dim-surface}
\tau_2(\gamma)=\big [-3 \epsilon_1\epsilon_2  k(s) k'(s)\big ]T+\big [ \epsilon_2 \left(K_M(s) \epsilon_1 k(s)+k''(s)-\epsilon_1\epsilon_2 k(s)^3\right)\big ] N\,;
\end{equation}

\begin{equation}\label{tritension-2-dim-sueface}
\begin{aligned}
\tau_3(\gamma)=&\big [-5 \epsilon_1 \epsilon_2 k^{(3)}(s) k(s)+10  k(s)^3 k'(s)-10 \epsilon_1 \epsilon_2 k'(s) k''(s)\big ]T\\
&+\big [\epsilon_2 K_M(s) \big( \epsilon_1 k''(s)-2   \epsilon_2 k(s)^3\big) + \epsilon_2 k^{(4)}(s)-10 \epsilon_1  k(s)^2 k''(s)\\
&-15 \epsilon_1  k(s) k'(s)^2+ \epsilon_2k(s)^5\big ] N\,.
\end{aligned}
\end{equation}
where, with a slight abuse of notation, we have written $K_M(s)$ for $K_M(\gamma(s))$. 
\end{proposition}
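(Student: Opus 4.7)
The proof is a direct computation using the Frenet system and the definition of $\tau_r(\gamma)$, so the plan is to produce all the ingredients in the correct order and then collect terms. Specifically, I would work by iterating the Frenet relations $\nabla_T T = \epsilon_2\, k\, N$ and $\nabla_T N = -\epsilon_1\, k\, T$ to obtain closed expressions for $\nabla_T^j T$, $j = 1,\ldots,2r-1$, as linear combinations of $T$ and $N$ whose coefficients are polynomial in $k$ and its derivatives (together with $\epsilon_1,\epsilon_2$). Since each $\nabla_T^j T$ stays in $\mathrm{Span}\{T,N\}$, the entire computation reduces to tracking scalar coefficients.

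The first few iterates are, by direct induction,
\[
\nabla_T^{2} T = -\epsilon_1\epsilon_2\, k^{2}\, T + \epsilon_2\, k'\, N,\qquad
\nabla_T^{3} T = -3\epsilon_1\epsilon_2\, k\, k'\, T + (\epsilon_2\, k'' - \epsilon_1\, k^{3})\, N,
\]
and analogous (longer) expressions for $\nabla_T^4 T$ and $\nabla_T^5 T$ that I would write down once and then reuse. For the curvature part, I would exploit the fact that on a semi-Riemannian surface the curvature operator takes the simple form $R(X,Y)Z = K_M\,(\langle Y,Z\rangle X - \langle X,Z\rangle Y)$, so that $R(T,T) = R(N,N) = 0$ and $R(N,T)T = \epsilon_1\, K_M\, N$. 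This way every $R(\nabla_T^{2r-3-\ell}T,\nabla_T^{\ell}T)T$ arising in \eqref{r-harmonicity-curves} collapses to a scalar multiple of $N$ (the coefficient along $T$ drops out because $R(\cdot,\cdot)T$ is orthogonal to $T$ after pairing with $K_M$).

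For \eqref{bitension-2-dim-surface}, I would then simply add $\nabla_T^{3} T$ to the single curvature term $R(\nabla_T T, T)T = \epsilon_2\, k\, R(N,T)T = \epsilon_1\epsilon_2\, K_M\, k\, N$, and read off the $T$ and $N$ components. For \eqref{tritension-2-dim-sueface}, corresponding to $r=3$, the formula becomes
\[
\tau_3(\gamma) = \nabla_T^{5} T + R(\nabla_T^{3} T, T)T - R(\nabla_T^{2} T, \nabla_T T)T,
\]
and each curvature term reduces again to a scalar multiple of $N$ via the 2D identity above, after which I collect the coefficients of $T$ and $N$ separately and compare with the stated expressions.

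The main obstacle is sheer bookkeeping for the tritension case: the iterates $\nabla_T^{4}T$ and $\nabla_T^{5}T$ already involve $k^{5}$, $k\,k'^{2}$, $k^{2}k''$, and $k^{(4)}$, and their signs depend on products of $\epsilon_1,\epsilon_2$, so a clean way to organize the calculation is essential. I would carry it out by maintaining a two-column table of the $T$- and $N$-coefficients of $\nabla_T^{j}T$ and updating it recursively via the Frenet rule, which reduces the whole proof to a finite verification with no conceptual difficulty beyond careful accounting of signs.
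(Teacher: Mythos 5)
Your proposal is correct and is exactly the "routine computation" the paper has in mind (the paper gives no explicit proof of this proposition): iterating the Frenet relations to express $\nabla_T^{j}T$ in $\mathrm{Span}\{T,N\}$, using the surface identity $R(N,T)T=\epsilon_1 K_M N$ so that all curvature terms fall along $N$, and collecting coefficients. I checked your iterates $\nabla_T^2T$, $\nabla_T^3T$ and the resulting assembly for $\tau_2$ and $\tau_3$ (including the signs $(-1)^\ell$ in the curvature sum), and they reproduce \eqref{bitension-2-dim-surface} and \eqref{tritension-2-dim-sueface} exactly.
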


Next, let us assume that $k(s)$ is a constant function, say $k(s)=\kappa$. Then it is easy to deduce from the Frenet equations that for all $\ell \geq 0$ we have:
\begin{equation}\label{formula-nablaelleTT-2-Frenet-pseudo}
\begin{cases}
\nabla_T^{2\ell} T= (-\epsilon_1\, \epsilon_2)^{\ell}\,\kappa^{2\ell} \,T \\
\nabla_T^{2\ell+1} T= (-\epsilon_1\, \epsilon_2)^{\ell}\, \epsilon_2\, \kappa^{2\ell+1}\,N.
\end{cases} 
\end{equation}

\begin{remark} When $m=2$, $-\epsilon_1 \epsilon_2 =1$. However, we have preferred to write $-\epsilon_1 \epsilon_2$ in \eqref{formula-nablaelleTT-2-Frenet-pseudo} because this makes the formula applicable in the more general context $m \geq 3 $, as we shall see in the proof of Theorem~\ref{Prop-r-harm-m-dim-space-form}, for instance.
\end{remark}
\begin{proof}[Proof of Theorem~\link\ref{Cor-biharm-2-dim-space-form}]
The result follows immediately from \eqref{bitension-2-dim-surface} with 

\noindent $K_M(s)=c={\rm constant}$. 
\end{proof}

\begin{proof}[Proof of Theorem~\ref{Prop-r-harm-2-dim-space-form}] We have to compute $\tau_r(\gamma)$. Using \eqref{r-harmonicity-curves}, \eqref{tensor-curvature-N(c)} and \eqref{formula-nablaelleTT-2-Frenet-pseudo} it is straightforward to compute
\[
\tau_r(\gamma)= C\, \big [\kappa^2 -\epsilon_2\, (r-1)c \big ] N\,,
\]
where
\[
C=\epsilon_2 \kappa^{2r-3} \,.
\]
\end{proof}

\begin{proof}[Proof of Theorem~\ref{Prop-3-harmon+2-Frenet-implica-k-costante}] The formula \eqref{tritension-2-dim-sueface} with $K_M(s)=c={\rm constant}$ is valid in this context. Explicit integration of the tangential component of the $3$-tension fields leads us to
\begin{equation}\label{eq:k''-esplicita}
k''(s)=\frac{2}{5} \epsilon_1 \epsilon_2 k(s)^3+\frac{c_1}{k(s)^2}
\end{equation}
and
\begin{equation}\label{eq:k'-esplicita}
k'^2(s)= \frac{\epsilon_1 \epsilon_2}{5} k^4(s)-\frac{2\,c_1}{k(s)}+c_2
\end{equation}
where $c_1,c_2$ are integration constants. From \eqref{eq:k''-esplicita} and \eqref{eq:k'-esplicita} it is easy to deduce that 
\[
k^{(4)}(s)=\frac{2 \left(5 \,c_1+2 \epsilon_1 \epsilon_2 k(s)^5\right) \left(-35 \,c_1+15 \,c_2 k(s)+6\epsilon_1 \epsilon_2 k(s)^5\right)}{25 k(s)^5}\,.
\]
Finally, substituting these expressions into the normal component of the $3$-tension field we find that $k(s)$ must be a root of a not identically zero polynomial and so $k(s)$ must be a constant as required.
\end{proof}
Finally, putting together Theorem~\ref{Prop-r-harm-m-dim-space-form} and Theorem~\ref{Prop-3-harmon+2-Frenet-implica-k-costante}, we easily deduce the classification Corollary~\ref{Th-classif-triharm-space-form}. 

\section{Proof of Theorem~\ref{Th-trihar-curve-k-nonconstant}}\label{Sec-curva-curvatura-non-costante}
 
We recall that here the problem is to investigate the existence of $r$-harmonic curves with \textit{non constant} curvatures. To a given curve $\gamma: I \to \R^3_1$, parametri\-zed by the arc length $s$ and verifying the Frenet equations
\begin{equation}\label{eq-Frenet-R3,bis}
\begin{cases}
\nabla_T T = \epsilon_2 \,k_\gamma(s) \,N \\
\nabla_T N =-\epsilon_1\,k_\gamma(s) \,T + \epsilon_3 \, \tau_\gamma(s) B \\
\nabla_T B = - \epsilon_2 \,\tau_\gamma(s) \,N \,,
\end{cases}
\end{equation}
we associate a ruled surface $S_\gamma$ immersed in $\R^3_1$ and locally described by the parametrization
\begin{equation}\label{eq-S-parametrisation}
X(s,v)=\gamma(s)+v\,N(s) \,,
\end{equation}
where $s\in I$ and $v \in (-\delta, \delta)$ for some $\delta>0$.

Now, let us find  under which conditions the curve $\gamma(s)=X(s,0)$ is a triharmonic curve into the ruled surface $S_{\gamma}$ and $S_{\gamma}$ is a Lorentz surface of $\R_1^3$.  
A local frame field for the tangent plane of $S_\gamma$ is given by the coordinate vector fields
\[
X_s =  (1 - \epsilon_1\,k_\gamma(s)v) \, T + \epsilon_3 \, \tau_\gamma(s) v\, B; \hspace{6 mm} X_v = N\,.
\]
Thus the coefficients of the first fundamental form of $S_\gamma$ are
\begin{eqnarray}\label{eq-S-firstFF}\nonumber
 E&=& \langle X_s, X_s \rangle = \epsilon_1 (1 - \epsilon_1\,k_\gamma(s)v)^2 + \epsilon_3 \tau_\gamma^2 v^2\,;\\
F &=& \langle X_s, X_v \rangle = \langle (1 - \epsilon_1\,k_\gamma(s)v) \, T + \epsilon_3 \, \tau_\gamma(s) v\, B, \, N \rangle = 0; \hspace{6 mm} \\\nonumber
G &=& \langle X_v, X_v \rangle = \epsilon_2\,.
\end{eqnarray}
If we denote by $N^S$ the normal vector field to the surface $S_\gamma$ in $\R_1^3$ we have
\[
N^S = \dfrac{X_s \wedge X_v}{\| X_s \wedge X_v \|} = \dfrac{-\epsilon_1 \epsilon_3 \, \tau_\gamma(s) v \, T + \epsilon_3 (1 - \epsilon_1\,k_\gamma(s)v) B}{\sqrt{| \epsilon_1 ( \tau_\gamma(s) v )^2 + \epsilon_3 (1 - \epsilon_1\,k_\gamma(s)v)^2 |}}\,,
\]
where we have used that $T\wedge N= \epsilon_3 B$.
Restricting the unit normal $N^S$ along the curve $\gamma(s) = X(s, 0)$ we obtain $N^S(s) = \epsilon_3 \, B(s)$. Therefore, $S_\gamma$ is a Lorentzian surface provided that $\epsilon_3=1$ and $\delta>0$ is sufficiently small.
 
Next, if we denote by $\nabla^S$  the Levi-Civita connection of $S_\gamma$ and by $\epsilon^{N^S} = \langle N^S, N^S \rangle$, we have
\[
 \nabla_T T = \nabla_T^S T + \epsilon^{N^S} \langle \nabla_T T , N^S \rangle N^S=\nabla_T^S T
 \]
because, from \eqref{eq-Frenet-R3,bis}, $\langle \nabla_T T , N^S \rangle= \langle \epsilon_2 k_\gamma(s) N, \epsilon_3 B \rangle = 0$.
Thus $k(s) = k_\gamma(s)$.

Now we compute the sectional curvature $K_S$ of the surface $S_\gamma$ along $\gamma$. Since $F = 0$, we can use the standard formula  (see \cite[p.81]{Neill})
\[
K_S = -\dfrac{1}{\sqrt{| EG |}} \left( {\rm sign}(E) \left( \dfrac{(\sqrt{|G|})_s}{\sqrt{|E|}} \right)_s+ {\rm sign}(G) \left( \dfrac{(\sqrt{|E|})_v}{\sqrt{|G|}}\right)_v \right)
\]
to obtain
\[
K_S= -\epsilon_2 \left( \dfrac{2\,k_\gamma^2 + 2 \epsilon_1 \epsilon_3 \tau_\gamma^2}{2} - \dfrac{4 k_\gamma^2}{4} \right) = - \epsilon_1 \epsilon_2 \epsilon_3 \tau_\gamma^2 = \tau_\gamma^2\,.
\]

According to \eqref{tritension-2-dim-sueface}, we  see that the condition of triharmonicity is equivalent to the following system of equations:
\begin{equation}\label{eq-System-tri-S}
\begin{cases}
 k_\gamma^{(3)} k_\gamma + 2  k_\gamma^3  k_\gamma' + 2 k_\gamma'k_\gamma''= 0;\\[1.5ex]
 (\epsilon_1 k_\gamma''-2 \epsilon_2 k_\gamma^3 ) \tau_\gamma^2 +k_\gamma^{(4)}(s) + 10 k_\gamma^2 k_\gamma'' + 15 k_\gamma (k_\gamma ')^2+ k_\gamma^5 = 0.
\end{cases} 
\end{equation}
Since we look for curves $\gamma$ which are \textit{proper} triharmonic, we can assume $k_\gamma > 0$. Then, multiplying  the first equation of \eqref{eq-System-tri-S} by $k_\gamma$, we deduce
\[
\big( k_\gamma'' k_\gamma^2 \big)' + \dfrac{2}{5}(k_\gamma^5)' = 0 \,.
\]
Now integration yields 
$$5k_\gamma'' k_\gamma^2 + 2k_\gamma^5 = c_1
$$ 
for some constant $c_1\in\R$. Since we are interested in non constant solutions, we also assume that $k'_\gamma \neq 0$. Then, after multiplying by $2 k_\gamma' k_\gamma^{-2}$, we deduce that:
\[
\big(5(k_\gamma')^2\big)' + (k_\gamma^4)' + \left( \dfrac{2c_1}{k_\gamma} \right)' = 0\,.
\]
Integrating again, we obtain 
\begin{equation}\label{eq-ODE}
5(k_\gamma')^2 +  k_\gamma^4 +  \dfrac{2c_1}{k_\gamma} = c_2 
\end{equation}
for some constant $c_2\in\R$. Now, in order to reduce technicalities, we can assume $c_1 = 0$, $c_2 = 1$ and denote by $\overline{k}_\gamma(s)$ a non constant positive solution of \eqref{eq-ODE} defined on an interval around $s=s_0$ with initial condition $\overline{k}(s_0)<(1/2)^{1/4}$.
Taking derivatives we easily obtain 
\[
10 \overline{k}_\gamma' \overline{k}_\gamma'' + 4 \overline{k}_\gamma^3 \overline{k}_\gamma' = 0
\]
 and, due the fact that $\overline{k}_\gamma' \neq 0$ and $\overline{k}_\gamma'' = -\dfrac{2}{5}\overline{k}_\gamma^3$, we have
\[
\epsilon_1 \overline{k}_\gamma'' -2\epsilon_2 \overline{k}_\gamma^3 = - \frac{2}{5} \left( \epsilon_1 + 5\epsilon_2\right)\overline{k}_\gamma^3 \neq 0 \,.
\]
Therefore, after a straightforward manipulation, we find that the second equation of the triharmonicity system \eqref{eq-System-tri-S} becomes
\begin{equation}\label{eq:taugammainkbar}
{\tau}^2_\gamma(s)= \frac{63(1-2 \overline{k}_\gamma^4)}{10 \overline{k}_\gamma^2(\epsilon_1+5\epsilon_2)}\,.
\end{equation}
Thus, choosing $\epsilon_1=-1$ and $\epsilon_2=1$ there exists an open neighborhood of $s_0$ on which the right-hand side of \eqref{eq:taugammainkbar} is positive. This ensures  the existence of a function $\overline{\tau}_\gamma(s)$ defined in an open neighborhood of $s=s_0$ and such that \eqref{eq-System-tri-S} is verified. Then the conclusion of Theorem~\ref{Th-trihar-curve-k-nonconstant} follows immediately from the Fundamental Existence Theorem~\ref{Th-Fund-curves}. In details, given $p_0\in \R^3_1$ and the two non constant functions $\overline{k}_\gamma(s)$ and $\overline{\tau}_\gamma(s)$ as described above, we can choose $\{T_0,N_0,B_0\}$ such that $\langle T_0,T_0\rangle=\epsilon_1=-1$, $\langle N_0,N_0\rangle=\epsilon_2=1$ and  $\langle B_0,B_0\rangle=\epsilon_3=1$. From Theorem~\ref{Th-Fund-curves} there exists a curve $\gamma(s)$ parametrized by arc length with Frenet Frame field coinciding with $\{T_0,N_0,B_0\}$ in $s_0$ and curvature functions coinciding with the given functions $\overline{k}_\gamma(s)$ and $\overline{\tau}_\gamma(s)$. Then, for a suitable open neighborhood $I$ of $s_0$ and $\delta>0$ sufficiently small the surface $S_{\gamma}$ is Lorentzian and the curve $\gamma(s)=X(s,0)$ is a triharmonic curve into  $S_{\gamma}$ with non constant curvature $k(s)=\overline{k}_\gamma(s)$.

\section{$r$-harmonic $3$-Frenet curves} \label{Sec-3-Frenet-curves}
In this section we shall prove Theorem~\ref{Th-r-harm-cond-3-dim-space-form} and his corollaries.

\begin{proof}[Proof of Theorem~\ref{Th-r-harm-cond-3-dim-space-form}]
First, it is important for us to establish a version in the semi-Riemannian  context of Lemma\link2.1 of \cite{MR4542687}. Indeed, we prove:

\begin{lemma}\label{lemma-Branding-nablaelle-pseudo}
Let $\gamma(s)$ be a $3$-Frenet helix into a semi-Riemannian manifold $(M^m_t, \langle,\rangle)$, $m \geq 3, 1 \leq t \leq m-1$. Then, for all $\ell \geq 0$, we have:
\begin{equation}\label{formula-nablaelleTT-3-Frenet-pseudo}
\begin{cases}
\nabla_T^{2\ell} T= {\mathcal A}_\ell \, T + {\mathcal B}_\ell\,B \\
\nabla_T^{2\ell+1} T={\mathcal C}_\ell\,N\,,
\end{cases} 
\end{equation}
where
\begin{equation}\label{eq-abc}
\left \{
\begin{array}{llll}
{\mathcal A}_0&=& 1 &\\
{\mathcal A}_\ell&=&(-1)^{\ell} \epsilon_1 \,\epsilon_2^{\ell} \kappa^2 (\epsilon_1\kappa^2 + \epsilon_3 \tau^2)^{\ell - 1} & {\rm if} \,\, \ell \geq 1\\
{\mathcal B}_0&=& 0 &\\
{\mathcal B}_\ell&=&-(-1)^{\ell} \epsilon_3\,\epsilon_2^{\ell} \kappa\,\tau (\epsilon_1\kappa^2 + \epsilon_3 \tau^2)^{\ell - 1} & {\rm if} \,\, \ell \geq 1\\
{\mathcal C}_\ell&=& (-1)^{\ell}\,\epsilon_2^{\ell + 1}\, \kappa (\epsilon_1 \, \kappa^2 + \epsilon_3 \, \tau^2)^\ell& {\rm if} \,\, \ell \geq 0\,.
\end{array} \right .
\end{equation}
\end{lemma}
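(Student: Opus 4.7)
The natural approach is proof by induction on $\ell \geq 0$, exploiting the fact that in a helix both $\kappa$ and $\tau$ are constants. This makes all the coefficients $\mathcal{A}_\ell,\mathcal{B}_\ell,\mathcal{C}_\ell$ constants as well, so when differentiating an expression of the form $\mathcal{A}_\ell T + \mathcal{B}_\ell B$ along $T$ no derivatives fall on the scalar coefficients and one only needs to apply the Frenet system \eqref{Frenet-field-general-pseudo} to $T$, $N$, $B$.

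For the base case $\ell = 0$ the identity $\nabla_T^0 T = T = 1 \cdot T + 0 \cdot B$ matches $\mathcal{A}_0 = 1$, $\mathcal{B}_0 = 0$, and a single application of the first Frenet equation gives $\nabla_T T = \epsilon_2 \kappa\, N$, which agrees with $\mathcal{C}_0 = \epsilon_2 \kappa$. The inductive step splits into two half-steps. First, assuming the even-order formula, compute
\[
\nabla_T^{2\ell+1} T \;=\; \mathcal{A}_\ell \nabla_T T + \mathcal{B}_\ell \nabla_T B \;=\; \epsilon_2 \bigl( \mathcal{A}_\ell \kappa - \mathcal{B}_\ell \tau \bigr) N\,,
\]
and then one verifies algebraically, from the explicit expressions in \eqref{eq-abc}, that $\epsilon_2(\mathcal{A}_\ell \kappa - \mathcal{B}_\ell \tau) = \mathcal{C}_\ell$; this collapses to the single power $(\epsilon_1 \kappa^2 + \epsilon_3 \tau^2)^\ell$ by factoring $\kappa$ out of the sum.

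Second, assuming the odd-order formula, compute
\[
\nabla_T^{2\ell+2} T \;=\; \mathcal{C}_\ell \nabla_T N \;=\; -\epsilon_1 \kappa\, \mathcal{C}_\ell\, T + \epsilon_3 \tau\, \mathcal{C}_\ell\, B\,,
\]
so that $\mathcal{A}_{\ell+1} = -\epsilon_1 \kappa\, \mathcal{C}_\ell$ and $\mathcal{B}_{\ell+1} = \epsilon_3 \tau\, \mathcal{C}_\ell$. Substituting the expression for $\mathcal{C}_\ell$ immediately yields the stated closed forms, with the sign $(-1)^{\ell+1}$ coming from the $-\epsilon_1\kappa$ factor (respectively the explicit minus sign in $\mathcal{B}_{\ell+1}$) combined with $(-1)^\ell$, and with the exponent of $(\epsilon_1\kappa^2 + \epsilon_3\tau^2)$ bumping up from $\ell$ to $(\ell+1)-1 = \ell$ as required.

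No genuine obstacle is anticipated: the proof is a bookkeeping induction, and the only delicate point is to keep careful track of the signs $(-1)^\ell$, the factors $\epsilon_2^\ell$, and the exponents of $(\epsilon_1 \kappa^2 + \epsilon_3 \tau^2)$ so that the three recursions $\mathcal{C}_\ell = \epsilon_2(\mathcal{A}_\ell \kappa - \mathcal{B}_\ell \tau)$, $\mathcal{A}_{\ell+1} = -\epsilon_1 \kappa \mathcal{C}_\ell$ and $\mathcal{B}_{\ell+1} = \epsilon_3 \tau \mathcal{C}_\ell$ match the closed-form expressions \eqref{eq-abc} at each stage.
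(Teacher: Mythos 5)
Your proposal is correct and follows exactly the route the paper indicates: an induction on $\ell$ using the Frenet equations, with the constancy of $\kappa$ and $\tau$ ensuring no derivatives fall on the coefficients. The recursions $\mathcal{C}_\ell=\epsilon_2(\mathcal{A}_\ell\kappa-\mathcal{B}_\ell\tau)$, $\mathcal{A}_{\ell+1}=-\epsilon_1\kappa\,\mathcal{C}_\ell$, $\mathcal{B}_{\ell+1}=\epsilon_3\tau\,\mathcal{C}_\ell$ all check out against the closed forms in \eqref{eq-abc}, so your argument is a complete version of the paper's sketched proof.
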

\begin{proof} The proof of this lemma can be easily carried out separating the two cases $\nabla_T^{2\ell} $ and  $\nabla_T^{2\ell+1} $. In both cases a simple induction argument which uses the Frenet equations enables us to obtain the required result.
\end{proof}
As an application, we can now compute explicitly the condition of $r$-harmoni\-city for $3$-Frenet helices in any semi-Riemannian space form.

Using the expression \eqref{tensor-curvature-N(c)} for the Riemannian curvature tensor field $R$ we can rewrite \eqref{r-harmonicity-curves} as follows:
\begin{equation}\label{r-harmonicity-curves-space-form}
\tau_r(\gamma)=\nabla^{2r-1}_T T+ c\,\sum_{\ell=0}^{r-2}(-1)^\ell \big(\langle \nabla^{\ell}_T T,T \rangle \nabla^{2r-3-\ell}_T T -\langle \nabla^{2r-3-\ell}_T T,T \rangle \nabla^{\ell}_T T\big) =0 \,.
\end{equation}
In the case that $r = 2$, using \eqref{formula-nablaelleTT-3-Frenet-pseudo} we can easily compute the bitension field of $\gamma$:
\begin{eqnarray*}
\tau_2(\gamma) & = &\nabla^3_T T + c \big( \langle T,T \rangle \nabla_T T - \langle \nabla_T T,T \rangle T \big) \\
&=&  - \,\epsilon_2^2\, \kappa (\epsilon_1 \, \kappa^2 + \epsilon_3 \, \tau^2) \,N + c \, \epsilon_1 \epsilon_2 \, \kappa \, N\\
& = & - \,\epsilon_2\, \kappa \big( \epsilon_2 \, (\epsilon_1 \, k^2 + \epsilon_3 \, \tau^2)- c \epsilon_1 \big) \, N 
\end{eqnarray*}
from which (i) follows immediately.

Now, let $r=3$. Then the expression \eqref{r-harmonicity-curves-space-form} becomes:
\[
\begin{aligned}
\tau_3(\gamma) & = \nabla^5_T T + c \big( \langle T,T \rangle \nabla_T^3 T - \langle \nabla^3_T T,T \rangle T - \langle \nabla_T T,T \rangle \nabla^2_T T + \langle \nabla^2_T T,T \rangle \nabla_T T \big) \\
& =   \epsilon_2^3\, \kappa (\epsilon_1 \, \kappa^2 + \epsilon_3 \, \tau^2)^2\,N - c \, \epsilon_1 \,\epsilon_2^2\, \kappa (\epsilon_1 \, \kappa^2 + \epsilon_3 \, \tau^2) \, N - c \, \epsilon_2^2 \, k^3 N\\
& =  - \, \, \kappa \Big[ - \epsilon_2 \, (\epsilon_1 \, \kappa^2 + \epsilon_3 \, \tau^2)^2 + c \, ( \epsilon_1\,\epsilon_3 \, \tau^2\, + 2 \kappa^2 ) \Big] \, N 
\end{aligned}
\]
from which (ii) follows immediately.

Now we deal with the general case $r \geq 4$. First, we suppose that $r=2s$ is even. Using \eqref{tensor-curvature-N(c)} into \eqref{r-harmonicity-curves} and separating even and odd indices we obtain:
\[
\begin{aligned}
\tau_{2s}(\gamma)  = &\; {\mathcal C}_{2s-1}\,N \\
&+c\Big [\sum_{j=0}^{s-1}  \langle \nabla_T^{2j} T,T \rangle\nabla_T^{2(2s-j-2)+1} T -\langle \nabla_T^{2(2s-j-2)+1} T,T \rangle \nabla_T^{2j} T \Big]\\
&  -c\Big [\sum_{j=1}^{s-1}  \langle \nabla_T^{2(j-1)+1} T,T \rangle\nabla_T^{2(2s-j-1)} T -\langle \nabla_T^{2(2s-j-1)} T,T \rangle \nabla_T^{2(j-1)+1} T \Big].
\end{aligned}
\] 
Next, using Lemma~\ref{lemma-Branding-nablaelle-pseudo} we find:
\begin{eqnarray*}
\tau_{2s}(\gamma) & = &{\mathcal C}_{2s-1}\,N +c\sum_{j=0}^{s-1}  \langle \nabla_T^{2j} T,T \rangle\nabla_T^{2(2s-j-2)+1} T \\
&  &+c\,\sum_{j=1}^{s-1} \langle \nabla_T^{2(2s-j-1)} T,T \rangle \nabla_T^{2(j-1)+1} T \\
& =& \Big [{\mathcal C}_{2s-1}+c\epsilon_1\,\sum_{j=0}^{s-1}{\mathcal A}_{j}{\mathcal C}_{2s-j-2}+ c\epsilon_1\,\sum_{j=1}^{s-1}{\mathcal C}_{j-1}{\mathcal A}_{2s-1-j}\Big ] N\,.
\end{eqnarray*} 
Finally, using the explicit expressions for ${\mathcal A}_{\ell},\,{\mathcal C}_{\ell}$ given in \eqref{eq-abc} and performing a straightforward simplification we find
\begin{equation*}
\tau_{2s}(\gamma)= \epsilon_2 \kappa \Big [(\epsilon_1 \kappa^2 + \epsilon_3 \tau^2)^{2s-3}\Big(- \epsilon_2  (\epsilon_1  \kappa^2 + \epsilon_3  \tau^2)^2 + c \big [  (2s - 1) \kappa^2 +\epsilon_1 \epsilon_3 \tau^2 \big ]\Big) \Big ] N
\end{equation*}
from which the conclusion (iii) follows immediately. The case that $r$ is odd is similar and so we omit further details.
\end{proof}

\begin{proof}[Proof of Corollary~\ref{Cor-facile}]
Since $c = 0$, we deduce from (\ref{r-harmonicity-curves-space-form}) that $\gamma$ is proper $r$-harmonic if and only if $\epsilon_1 \, \kappa^2 + \epsilon_3 \tau^2 = 0$. Therefore, there exist solutions only if $\epsilon_1$ and $\epsilon_3$ have opposite sign. Since the index of the metric is $1$, it follows that $\epsilon_2 = 1$ and so the normal vector field $N$ is space-like. Finally, the $r$-harmonicity condition clearly reduces to $\kappa^2 = \tau^2$.
\end{proof}

\begin{proof}[Proof of Corollary~\ref{Cor-tecnico}]
Essentially, the proof consists in the analysis of \eqref{eq-r-harmonicity-spaceforms-r>3}.

\textbf{Case} (i) -- By assumption $\epsilon_2=1$. Then $\epsilon_1 \epsilon_3= -1$. It follows immediately that if $\kappa^2 = \tau^2$ then the curve $\gamma$ is $r$-harmonic for all $r > 3$.\\
Next, we study the equation $- \epsilon_2 \, (\epsilon_1 \, \kappa^2 + \epsilon_3 \, \tau^2)^2 + c \, \big( \epsilon_1\,\epsilon_3 \, \tau^2\, + (r - 1) \kappa^2\big) = 0$ when $r \geq 3$.
This equation in our case is equivalent to
$$\tau^4  + (  c - 2 \kappa^2 ) \tau^2  +\big ( \kappa^4 -  \, c\,(r - 1) \kappa^2\big ) = 0\,,$$
that is a second order degree equation in $\tau^2$. Now, a routine analysis shows that there exist positive roots if and only if either (a) or (b) holds.

\textbf{Case} (ii) -- It is immediate.

\textbf{Cases} (iii) and (iv) -- The computations are similar to Case (i), so we omit further details.
\end{proof}

\section{$n$-Frenet curves, $n \geq 4$}\label{Sec-n-Frenet}
Starting from Theorem~\ref{biharmonic-n-Frenet-curve}, in the introduction we stated several results concerning $n$-Frenet curves with $n \geq 4$. This section is devoted to their proofs. First we establish:

\begin{proposition}\label{prop-n-Frenet-bi}
Let $\gamma$ be an $n$-Frenet curve parametrized by the arc length $s$ into an $m$-dimensional semi-Riemannian manifold $(M_t^m, g)$, $m \geq n\geq 4$. Then its bitension field is given by:
\begin{equation}\label{eq-tau2-n-general}
\begin{aligned}
\tau_2(\gamma)  = &\; \epsilon_2 \big( - 3 \epsilon_1 \,  k_1 \, k_1' \,  F_1 \, +(k_1'' - \epsilon_1 \, \epsilon_2 \, k_1^3 \, - \epsilon_2 \, \epsilon_3 \, k_1 \, k_2^2) F_2 +\\
&+ \epsilon_3( 2  k_1'  \, k_2 \, + k_1 \,  k_2') \, F_3 + \epsilon_3 \, \epsilon_4  \,k_1 \, k_2 \, k_3 \, F_4 +  \, k_1 \, R(F_2, F_1) F_1  \big)\,.
\end{aligned}
\end{equation}
\end{proposition}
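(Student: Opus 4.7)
The plan is to carry out a direct computation from the defining formula \eqref{r-harmonicity-curves} specialized to $r=2$, namely
\[
\tau_2(\gamma) = \nabla_T^3 T + R(\nabla_T T, T) T,
\]
using the Frenet equations \eqref{Frenet-field-general-pseudo} iteratively. The assumption $n \geq 4$ is needed only to ensure that terms involving $F_3$ and $F_4$ are well defined (for $n = 4$ exactly, $\nabla_T F_4 = -\epsilon_3 k_3 F_3$ but $k_4$ and $F_5$ do not appear).

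First I would compute $\nabla_T T = \epsilon_2 k_1 F_2$, and then differentiate once more, substituting $\nabla_T F_2 = -\epsilon_1 k_1 F_1 + \epsilon_3 k_2 F_3$, to obtain
\[
\nabla_T^2 T = -\epsilon_1 \epsilon_2 k_1^2 \, F_1 + \epsilon_2 k_1' \, F_2 + \epsilon_2 \epsilon_3 k_1 k_2 \, F_3.
\]
Next, differentiating once more and substituting $\nabla_T F_1, \nabla_T F_2, \nabla_T F_3$ from \eqref{Frenet-field-general-pseudo} (and using $\nabla_T F_3 = -\epsilon_2 k_2 F_2 + \epsilon_4 k_3 F_4$, which introduces the $F_4$ term), I would collect the result as a linear combination of $F_1, F_2, F_3, F_4$. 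Careful bookkeeping of signs, using $\epsilon_j^2 = 1$ to simplify the coefficients, yields
\[
\nabla_T^3 T = -3 \epsilon_1 \epsilon_2 k_1 k_1' \, F_1 + \epsilon_2 (k_1'' - \epsilon_1 \epsilon_2 k_1^3 - \epsilon_2 \epsilon_3 k_1 k_2^2) \, F_2 + \epsilon_2 \epsilon_3 (2 k_1' k_2 + k_1 k_2') \, F_3 + \epsilon_2 \epsilon_3 \epsilon_4 k_1 k_2 k_3 \, F_4.
\]

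Finally, I would add the curvature contribution. Since $T = F_1$ and $\nabla_T T = \epsilon_2 k_1 F_2$, the skew-symmetry of $R$ in the first two slots gives
\[
R(\nabla_T T, T) T = \epsilon_2 k_1 \, R(F_2, F_1) F_1,
\]
and factoring $\epsilon_2$ out of the sum of the two contributions yields exactly the expression stated in \eqref{eq-tau2-n-general}. There is no conceptual obstacle in this proposition; the only step that requires care is keeping track of the signs $\epsilon_i$ introduced each time a vector of the Frenet frame is differentiated, which in the semi-Riemannian setting no longer collapse to a skew-symmetric pattern as pointed out in Remark~\ref{remark-not-skew}.
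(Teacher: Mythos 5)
Your proposal is correct and follows essentially the same route as the paper: iterate the Frenet equations to get $\nabla_T T$, $\nabla_T^2 T$, $\nabla_T^3 T$, then add $R(\nabla_T T,T)T=\epsilon_2 k_1 R(F_2,F_1)F_1$; all coefficients agree with \eqref{eq-tau2-n-general}. (Minor wording point: the identity $R(\nabla_T T,T)T=\epsilon_2 k_1 R(F_2,F_1)F_1$ is just linearity of $R$ in its first argument, not skew-symmetry.)
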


\begin{proof}
We need to compute the first three covariant derivatives of $T$. Using the Frenet equations \eqref{Frenet-field-general-pseudo} we obtain
\begin{small}
\begin{eqnarray*}
\nabla_T T &=& \epsilon_2 \, k_1 \, F_2;\\
\nabla_T^2 T &=& \epsilon_2 \, \nabla_T (k_1 \, F_2) = \epsilon_2 \, \big( k_1' \, F_2 + \, k_1 \, (- \epsilon_1 \, k_1 \, F_1 + \epsilon_3 \, k_2 \, F_3 ) \big)\\
& =& \epsilon_2 ( - \epsilon_1 \, k_1^2 \, F_1 \, + \, k_1' \, F_2 \,  + \epsilon_3 \, k_1 \, k_2 \, F_3 ) ;\\
\nabla_T^3 T &=&  \epsilon_2 ( - \epsilon_1 \, \nabla_T (k_1^2 \,  F_1 )\, + \, \nabla_T ( k_1' \, F_2 )\,  + \epsilon_3 \, \nabla_T ( k_1 \,  k_2 \, F_3) ) \\
& =&  \epsilon_2 ( - \epsilon_1 \, 2 k_1 \, k_1' \,  F_1 \, - \epsilon_1 \, \epsilon_2 \, k_1^3 \, F_2 + k_1'' \, F_2  + k_1' \, (-\epsilon_1 \, k_1 \, F_1\\
&&  + \epsilon_3 \, k_2 \, F_3)  + \epsilon_3 \, ( k_1' \,  k_2 + k_1 \,  k_2') \, F_3 + \epsilon_3\,k_1 \, k_2 (-\epsilon_2 \, k_2 \, F_2 + \epsilon_4 \, k_3 \, F_4))\\
&=& \epsilon_2 \Big( - 3 \epsilon_1 \,  k_1 \, k_1' \,  F_1 \, +(k_1'' - \epsilon_1 \, \epsilon_2 \, k_1^3 \, - \epsilon_2 \, \epsilon_3 \, k_1 \, k_2^2) F_2\\
&& + \epsilon_3( 2  k_1'  \, k_2 \, + k_1 \,  k_2') \, F_3 + \epsilon_3 \, \epsilon_4  \,k_1 \, k_2 \, k_3 \, F_4) \Big).
\end{eqnarray*}
\end{small}
Thus, replacing in \eqref{r-harmonicity-curves} and simplifying we obtain
\begin{eqnarray*}
\tau_2(\gamma) & = & \nabla_T^3 T + R(\nabla_T T, T)T \\
&=& \epsilon_2 \big( - 3 \epsilon_1 \,  k_1 \, k_1' \,  F_1 \, +(k_1'' - \epsilon_1 \, \epsilon_2 \, k_1^3 \, - \epsilon_2 \, \epsilon_3 \, k_1 \, k_2^2) F_2 +\\
&&+ \epsilon_3( 2  k_1'  \, k_2 \, + k_1 \,  k_2') \, F_3 + \epsilon_3 \, \epsilon_4  \,k_1 \, k_2 \, k_3 \, F_4 \big) + \epsilon_2 \, k_1 \, R(F_2, F_1) F_1 
\end{eqnarray*}
and so we have the required expression.
\end{proof}

\begin{proof}[Proof of Theorem~\ref{biharmonic-n-Frenet-curve}]
Since the Frenet frame field $\left \{F_1,\ldots,F_n \right \}$ is an orthonormal frame and \eqref{eq:Rorthogonalfi} holds, we can decompose the bitension field as $\tau_2(\gamma) = \sum\limits_{i=1}^n \epsilon_i \langle \tau_2(\gamma), F_i \rangle \, F_i$.\\

From \eqref{eq-tau2-n-general} and using the fact that $\langle R(F_2,F_1)F_1, F_1\rangle = 0$ we find that
\begin{equation*}
\langle \tau_2(\gamma), F_1 \rangle = -3 \, \epsilon_2 k_1 \, k_1' =0\,.
\end{equation*}
Now, since $\gamma$ is proper biharmonic, we obtain that the first condition in \eqref{eq-biharm-n-Frenet} must hold. Therefore, we can rewrite the bitension field as follows:
\begin{equation*}
\tau_2(\gamma) = \epsilon_2 \big[ ( - \epsilon_1  \epsilon_2  k_1^3  - \epsilon_2  \epsilon_3  k_1  k_2^2) F_2 +  \epsilon_3  k_1   k_2'  F_3 + \epsilon_3 \, \epsilon_4  k_1  k_2  k_3  F_4 +   k_1  R(F_2, F_1) F_1 \big]
\end{equation*}
Now, it is easy to compute all the other components of $\tau_2(\gamma)$:
\begin{eqnarray*}
\langle \tau_2(\gamma), F_2 \rangle &=& \epsilon_2 \, k_1 \, \big( - ( \epsilon_1 \, k_1^2 \, + \, \epsilon_3 \, k_2^2) \, +  \, \langle R(F_2, F_1) F_1, F_2 \rangle \big) \,; \\
\langle \tau_2(\gamma), F_3 \rangle &=&   \epsilon_2 \, k_1 \, \big(  k_2' \, +  \,  \langle R(F_2, F_1) F_1, F_3 \rangle \big) \,;\\
\langle \tau_2(\gamma), F_4 \rangle &= &\epsilon_2 \, k_1 \, \big(  \epsilon_3 \, k_2 \, k_3 \, +  \, \langle R(F_2, F_1) F_1, F_4 \rangle \big) \,;\\
\langle \tau_2(\gamma), F_i \rangle &=& \epsilon_2 \, k_1 \, \langle R(F_2, F_1) F_1, F_i \rangle \,, \qquad 5 \leq i \leq n\,,
\end{eqnarray*}
as required to end the proof of Theorem~\ref{biharmonic-n-Frenet-curve}.

\end{proof}
A computation similar to Proposition~\ref{prop-n-Frenet-bi}, using just \eqref{r-harmonicity-curves} and \eqref{Frenet-field-general-pseudo}, leads us to:
\begin{proposition}\label{prop-n-Frenet-tri}
Let $\gamma$ be an $n$-Frenet helix parametrized by the arc length $s$ into an $m$-dimensional semi-Riemannian manifold $(M_t^m, g)$, $1 \leq t \leq m-1$, $m \geq n\geq 6$. Then its tritension field is given by:
\begin{eqnarray*}
\tau_3(\gamma) & = & - \epsilon_2 \, \kappa_1 \big\{ - \epsilon_2 \big[ \epsilon_2 (\epsilon_1\, \kappa_1^2 \,  + \epsilon_3 \, \kappa_2^2)^2  + \epsilon_3^2 \, \epsilon_4  \, \kappa_2^2 \, \kappa_3^2 \big] F_2 +\\
&&+ \, \epsilon_3 \, \epsilon_4 \,  \kappa_2 \, \kappa_3 \big[\epsilon_2 (\epsilon_1\, \kappa_1^2 \,  + \epsilon_3 \, \kappa_2^2 )+ \epsilon_4(\epsilon_3 \,  \kappa_3^2 + \epsilon_5 \kappa_4^2 )\, \big] F_4 \\
&& -\epsilon_3 \, \epsilon_4 \epsilon_5 \epsilon_6 \,\kappa_2 \, \kappa_3 \, \kappa_4 \, \kappa_5 F_6 +  \epsilon_2 \, (2 \epsilon_1\, \kappa_1^2  + \epsilon_3 \, \kappa_2^2 ) R( F_2, F_1) F_1 \\
&&+ \epsilon_2 \, \epsilon_3 \, \kappa_1 \,  \kappa_2 \, R( F_3 , F_2) F_1 - \epsilon_3 \, \epsilon_4 \,\kappa_2 \, \kappa_3 R(F_4, F_1) F_1 \big\}.
\end{eqnarray*}
\end{proposition}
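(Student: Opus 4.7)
\textbf{Proof plan for Proposition~\ref{prop-n-Frenet-tri}.} Starting from the definition
\[
\tau_3(\gamma)=\nabla^{5}_T T+R(\nabla^{3}_T T,T)T-R(\nabla^{2}_T T,\nabla_T T)T ,
\]
the plan is to first compute the iterated covariant derivatives $\nabla^{k}_T T$ for $k=1,\ldots,5$ in the Frenet frame, and then to expand the two curvature terms by multilinearity.

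The first step uses the computation already carried out in the proof of Proposition~\ref{prop-n-Frenet-bi}, specialized to the helix case $k_i'\equiv 0$. This gives
\[
\nabla_T T=\epsilon_2\kappa_1 F_2,\qquad \nabla^{2}_T T=-\epsilon_1\epsilon_2\kappa_1^{2}F_1+\epsilon_2\epsilon_3\kappa_1\kappa_2 F_3,
\]
\[
\nabla^{3}_T T=-\kappa_1\bigl(\epsilon_1\kappa_1^{2}+\epsilon_3\kappa_2^{2}\bigr)F_2+\epsilon_2\epsilon_3\epsilon_4\kappa_1\kappa_2\kappa_3 F_4 .
\]
Introducing the shorthand $A=\epsilon_1\kappa_1^{2}+\epsilon_3\kappa_2^{2}$, I would then apply $\nabla_T$ twice more, each time using the Frenet system \eqref{Frenet-field-general-pseudo}. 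This produces $\nabla^{4}_T T$ with components along $F_1,F_3,F_5$ and, after one more differentiation, $\nabla^{5}_T T$ with components along $F_2,F_4,F_6$. Collecting and repeatedly using $\epsilon_j^{2}=1$, the coefficients simplify to
\[
\bigl\langle \nabla^{5}_T T,\,\cdot \,\bigr\rangle/\epsilon_j:\qquad \epsilon_2\kappa_1\bigl[A^{2}+\epsilon_2\epsilon_4\kappa_2^{2}\kappa_3^{2}\bigr],\; -\epsilon_2\epsilon_3\epsilon_4\kappa_1\kappa_2\kappa_3\bigl[\epsilon_2 A+\epsilon_4(\epsilon_3\kappa_3^{2}+\epsilon_5\kappa_4^{2})\bigr],\; \epsilon_2\epsilon_3\epsilon_4\epsilon_5\epsilon_6\kappa_1\kappa_2\kappa_3\kappa_4\kappa_5,
\]
which, after factoring out $-\epsilon_2\kappa_1$, match the three $F$-terms in the statement.

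For the second step, I would substitute the expressions for $\nabla^{2}_T T$, $\nabla^{3}_T T$ and $\nabla_T T$ into the two curvature terms, using multilinearity of $R$, the identities $T=F_1$ and $R(X,Y)=-R(Y,X)$. The term $R(\nabla^{3}_T T,T)T$ contributes $-\kappa_1 A\,R(F_2,F_1)F_1+\epsilon_2\epsilon_3\epsilon_4\kappa_1\kappa_2\kappa_3\,R(F_4,F_1)F_1$, while $-R(\nabla^{2}_T T,\nabla_T T)T$ contributes $-\epsilon_1\kappa_1^{3}R(F_2,F_1)F_1+\epsilon_3\kappa_1^{2}\kappa_2 R(F_2,F_3)F_1$. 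Combining the $R(F_2,F_1)F_1$ contributions gives the coefficient $-\kappa_1(2\epsilon_1\kappa_1^{2}+\epsilon_3\kappa_2^{2})$, and rewriting $R(F_2,F_3)F_1=-R(F_3,F_2)F_1$ yields exactly the three curvature terms in the stated formula (after pulling the common factor $-\epsilon_2\kappa_1$ out).

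The main obstacle is purely bookkeeping: keeping track of the correct $\epsilon_j$'s through two more differentiations is error-prone, and the final simplification of the $F_4$-coefficient requires noticing that the combination
\[
\epsilon_3 A+\epsilon_2\epsilon_4\kappa_3^{2}+\epsilon_2\epsilon_3\epsilon_5\kappa_4^{2}
\]
can be rearranged, up to a factor $\epsilon_2\epsilon_3\epsilon_4$, as $\epsilon_2 A+\epsilon_4(\epsilon_3\kappa_3^{2}+\epsilon_5\kappa_4^{2})$. Once this regrouping is identified, everything fits into the form displayed in the statement. The condition $n\geq 6$ is needed precisely so that $F_6$ and the curvatures $\kappa_4,\kappa_5$ are available.
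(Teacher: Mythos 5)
Your proposal is correct and follows essentially the same route as the paper: compute $\nabla_T^k T$ for $k=1,\dots,5$ by iterating the Frenet equations \eqref{Frenet-field-general-pseudo} with constant curvatures, then substitute $\nabla_T^5T$, $\nabla_T^3T$, $\nabla_T^2T$ and $\nabla_T T$ into $\tau_3(\gamma)=\nabla_T^5T+R(\nabla_T^3T,T)T-R(\nabla_T^2T,\nabla_TT)T$ and expand the curvature terms by multilinearity. The intermediate expressions you record (the coefficients of $F_2,F_4,F_6$ in $\nabla_T^5T$ and the regrouped coefficients of the three curvature terms) agree with the paper's computation.
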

\begin{proof}
From \eqref{r-harmonicity-curves}, the tritension field can be written as
\begin{equation}\label{tri-bis}
\tau_3(\gamma) = \nabla_T^5 T + R(\nabla_T^3 T, T) T - R(\nabla_T^2 T, \nabla_T T) T\,.
\end{equation}
Thus we need to compute the first five covariant derivatives of $T$:
\begin{eqnarray*}
\nabla_T T &=& \epsilon_2 \, \kappa_1 \, F_2;\\
\nabla_T^2 T &=& \epsilon_2 \, \kappa_1 ( - \epsilon_1 \, \kappa_1 \, F_1 + \epsilon_3 \, \kappa_2 \, F_3 ) ;\\
\nabla_T^3 T &=& - \epsilon_2 \, \kappa_1 \big[  \epsilon_2 (\epsilon_1\, \kappa_1^2 \,  + \epsilon_3 \, \kappa_2^2 ) F_2 - \epsilon_3 \, \epsilon_4 \,\kappa_2 \, \kappa_3 \, F_4\big];\\
\nabla_T^4 T 
&=& - \epsilon_2 \, \kappa_1 \big[ - \epsilon_1 \, \epsilon_2 \, \kappa_1  (\epsilon_1\, \kappa_1^2 \,  + \epsilon_3 \, \kappa_2^2 ) F_1 \\
&&+ \epsilon_3 \, \kappa_2 \big(\epsilon_2 (\epsilon_1\, \kappa_1^2 \,  + \epsilon_3 \, \kappa_2^2 ) + \epsilon_3 \, \epsilon_4  \, \kappa_3^2 \big) F_3 
 - \epsilon_3 \, \epsilon_4 \epsilon_5 \,\kappa_2 \, \kappa_3 \, \kappa_4 \, F_5 \big];\\
\nabla_T^5 T 
&=& - \epsilon_2 \, \kappa_1 \big[ - \epsilon_2 \big( \epsilon_2 (\epsilon_1\, \kappa_1^2 \,  + \epsilon_3 \, \kappa_2^2)^2  + \epsilon_3^2 \, \epsilon_4  \, \kappa_2^2 \, \kappa_3^2 \big) F_2 \\
&&+ \, \epsilon_3 \, \epsilon_4 \,  \kappa_2 \, \kappa_3 \big(\epsilon_2 (\epsilon_1\, \kappa_1^2 \,  + \epsilon_3 \, \kappa_2^2  )+ \epsilon_4 (\epsilon_3 \,  \kappa_3^2 + \epsilon_5 \kappa_4^2 )\, \big) F_4\\
&&  -\epsilon_3 \, \epsilon_4 \epsilon_5 \epsilon_6 \,\kappa_2 \, \kappa_3 \, \kappa_4 \, \kappa_5 F_6\big ].
\end{eqnarray*}
Using these expressions in \eqref{tri-bis} it is straightforward to end the proof.
\end{proof}

Now, we examine more in detail the case that the ambient is a semi-Rie\-mannian space form. As a consequence of Proposition~\ref{prop-n-Frenet-tri}, inspection of the component of the tritension field along $F_6$ suggests that there are no proper solutions if $n \geq 6 $. Therefore, it is geometrically natural to restrict our attention to the case that $m=n=5$. In this case the computations are equivalent to those in Proposition~\ref{prop-n-Frenet-tri} with the additional assumption $\kappa_5=0$. Then, using \eqref{tensor-curvature-N(c)} and simplifying, we easily deduce Theorem~\ref{cor-n-Frenet-tri-spaceforms}.\\

As for Examples~\ref{cor-controes-triharmonic} and \ref{cor-controes-triharmonic-bis} it is easy to check that all the conditions of Theorem~\ref{cor-n-Frenet-tri-spaceforms} are verified with $c=1$. Then the conclusion follows from the Fundamental Existence Theorem~\ref{Th-Fund-curves}.

\section{Proof of Proposition~\ref{Th-general-r-harmonicity-3-Frenet-helices} and Theorem~\ref{Th-r-harm-eliche-product-result1}}\label{sec-Lorentzian-product} 
\begin{proof}[Proof of Proposition~\ref{Th-general-r-harmonicity-3-Frenet-helices}] The assumption \eqref{eq-r-harm-general-helices-r>=4}, together with the expressions given in Lemma~\ref{lemma-Branding-nablaelle-pseudo}, enable us to conclude that
\[
\nabla_T^j T=0 
\]
for all $j \geq 3$. Then direct inspection of the tension field \eqref{r-harmonicity-curves} shows that all its terms identically vanish when $r\geq 4$.
\end{proof}
The proof of Theorem~\ref{Th-r-harm-eliche-product-result1} is more laborious and so we carry out some preliminary work. 

First, we observe that in the ambient space $\left ( \R \times N^{m-1}(c),g_{{\rm prod}}\right )$ the curvature terms of the $r$-tension field  \eqref{r-harmonicity-curves} are always tangent to the factor $N^{m-1}(c)$.
It follows readily from Lemma~\ref{lemma-Branding-nablaelle-pseudo} that, if \eqref{eq-r-harm-general-helices-r>=4} does \textit{not} hold, then a \textit{necessary condition} for a helix $\gamma$ to be proper $r$-harmonic, $r \geq 2$, is the fact that its normal vector field $N$ is itself tangent to $N^{m-1}(c)$.

This can be achieved only if $\epsilon_2=1$ and the component of $\gamma$ along the factor $\R$ is a linear function of $s$. Therefore, without loss of generality since vertical translations are isometries, in the family \eqref{eq-def-gamma} we have assumed that this component of $\gamma$ is equal to $d_1\,s$. Moreover, we observe that all the conclusions of Theorem~\ref{Th-r-harm-eliche-product-result1} are trivial if $d_1=0$. Therefore, from now on, we shall assume $d_1 \neq 0$. To fix notation, we write as follows the Frenet equations verified by $\alpha$ in $N^{m-1}(c)$:
\begin{equation}\label{Fren-eq-pseudo-alpha}
\begin{cases}
\widetilde{\nabla}_{T_\alpha} T_\alpha= \kappa_\alpha N_\alpha \\
\widetilde{\nabla}_{T_\alpha} N_\alpha=- \kappa_\alpha T_\alpha+\tau_\alpha B_\alpha\\
\widetilde{\nabla}_{T_\alpha} B_\alpha=-\tau_\alpha N_\alpha\,,
\end{cases}
\end{equation}
where here $\widetilde{\nabla}$ denotes the covariant derivative in $N^{m-1}(c)$. Also, we write the Frenet field of $\gamma$ as follows:
\[
T=\dot{\gamma}=d_1\, \partial_t+d_2 \,\dot{\alpha}(d_2s)= d_1\, \partial_t+d_2\, T_\alpha\,, \quad N=\widetilde{N}\,, \quad B=-B_t \,\partial_t + \widetilde{B}\,,
\]
where $\widetilde{N},\widetilde{B}$ are tangent to the factor $N^{m-1}(c)$. Note that
\[
\epsilon_1=\langle T,T \rangle= - d_1^2 +d_2^2
\]
forces the assumptions \eqref{eq-d} on the constants $d_1,d_2$.
\begin{proof}[Proof of Theorem~\ref{Th-r-harm-eliche-product-result1}]
First, we establish
\begin{lemma}\label{lemma-gamma-elica} The curve $\gamma$ verifies the Frenet equations
\begin{equation}\label{Fren-eq-pseudo-bis}
\begin{cases}
\nabla_T T=\epsilon_2 \kappa N \\
\nabla_T N=-\epsilon_1 \kappa T+\epsilon_3 \tau B\\
\nabla_T B=-\epsilon_2 \tau N\,,
\end{cases}
\end{equation}
with $\epsilon_2=1$ and
\begin{equation}\label{eq-kappaetauproduct}
\kappa=(\epsilon_1+d_1^2)\kappa_\alpha\,, \quad \quad \tau=\sqrt{\epsilon_3\,(\epsilon_1+d_1^2) \left (\tau_\alpha^2-\epsilon_1 \,d_1^2\,\kappa_\alpha^2 \right )}\,.
\end{equation}
Moreover, $\gamma$ does not verify \eqref{eq-r-harm-general-helices-r>=4}.
\end{lemma}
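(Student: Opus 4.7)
The plan is to exploit the product structure of the ambient space to reduce everything to computations involving the Frenet frame of $\alpha$. Recall that in the product $\left(\R\times N^{m-1}(c), g_{\rm prod}\right)$ the Levi-Civita connection splits: $\partial_t$ is parallel, and for vector fields tangent to the second factor the connection reduces to $\widetilde{\nabla}$ on $N^{m-1}(c)$. In particular, writing $T=d_1\,\partial_t+d_2\,T_\alpha$, and using that $d_1,d_2$ are constants, we obtain
\[
\nabla_T T=d_2^{\,2}\,\widetilde{\nabla}_{T_\alpha}T_\alpha=d_2^{\,2}\,\kappa_\alpha\,N_\alpha.
\]
Since $N_\alpha$ is tangent to the Riemannian factor, $\langle N_\alpha,N_\alpha\rangle=1$, so $\epsilon_2=1$, $N=N_\alpha$, and $\kappa=d_2^{\,2}\kappa_\alpha=(\epsilon_1+d_1^2)\kappa_\alpha$, as claimed.

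Next I would compute $\nabla_T N=d_2\,\widetilde{\nabla}_{T_\alpha}N_\alpha=d_2(-\kappa_\alpha T_\alpha+\tau_\alpha B_\alpha)$ and split this vector into a component along $T$ and its orthogonal complement. Using $T_\alpha=(T-d_1\partial_t)/d_2$ and the identity $\epsilon_1 d_2^{\,2}-1=\epsilon_1 d_1^2$ one checks that the $T$-component is exactly $-\epsilon_1\kappa\,T$. The remaining piece,
\[
\nabla_T N+\epsilon_1\kappa\,T=d_1\,\epsilon_1\,d_2^{\,2}\,\kappa_\alpha\,\partial_t+\epsilon_1\,d_1^{\,2}\,d_2\,\kappa_\alpha\,T_\alpha+d_2\,\tau_\alpha\,B_\alpha,
\]
must equal $\epsilon_3\tau\,B$. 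Taking the squared norm with respect to $g_{\rm prod}$ (remember $\langle\partial_t,\partial_t\rangle=-1$) and simplifying yields
\[
\epsilon_3\tau^2=d_2^{\,2}\bigl(\tau_\alpha^2-\epsilon_1 d_1^{\,2}\kappa_\alpha^2\bigr)=(\epsilon_1+d_1^2)\bigl(\tau_\alpha^2-\epsilon_1 d_1^{\,2}\kappa_\alpha^2\bigr),
\]
which matches the formula for $\tau$ in \eqref{eq-kappaetauproduct}; the hypothesis \eqref{eq-d-condizione} guarantees positivity. The fact that $B$ has a non-trivial $\partial_t$-component is then read off the explicit expression.

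The step I expect to be the main (but not severe) obstacle is verifying that $\gamma$ is a genuine $3$-Frenet curve, i.e., that $\nabla_T B$ is actually proportional to $N$ rather than having a component along a fourth frame vector. For this I would differentiate the explicit formula for $B$, using that $\partial_t$ is parallel and that $\kappa_\alpha,\tau_\alpha$ are constants (helix), reducing the calculation to $\widetilde{\nabla}_{T_\alpha}T_\alpha$ and $\widetilde{\nabla}_{T_\alpha}B_\alpha$ via \eqref{Fren-eq-pseudo-alpha}. Both outputs are multiples of $N_\alpha=N$, and collecting terms gives
\[
\nabla_T B=\frac{d_2^{\,2}\bigl(\epsilon_1 d_1^{\,2}\kappa_\alpha^2-\tau_\alpha^2\bigr)}{\epsilon_3\tau}\,N=-\tau\,N,
\]
after using the identity $d_2^{\,2}(\tau_\alpha^2-\epsilon_1 d_1^{\,2}\kappa_\alpha^2)=\epsilon_3\tau^2$ derived above. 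This closes the Frenet system with $\epsilon_2=1$.

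Finally, for the last assertion, one substitutes the expressions for $\kappa$ and $\tau$ into $\epsilon_1\kappa^2+\epsilon_3\tau^2$, uses $d_2^{\,2}-d_1^{\,2}=\epsilon_1$, and simplifies to
\[
\epsilon_1\kappa^2+\epsilon_3\tau^2=d_2^{\,2}\bigl(\kappa_\alpha^2+\tau_\alpha^2\bigr)>0,
\]
so condition \eqref{eq-r-harm-general-helices-r>=4} fails, as required.
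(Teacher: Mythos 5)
Your proof is correct and follows essentially the same route as the paper: both exploit the splitting of the product connection to compute $\nabla_T T$ and $\nabla_T N$ from the Frenet data of $\alpha$, deducing $N=N_\alpha$, $\epsilon_2=1$, $\kappa=d_2^2\kappa_\alpha$, and then the formula for $\tau$. The only differences are cosmetic — you obtain $\epsilon_3\tau^2$ as the squared norm of $\nabla_T N+\epsilon_1\kappa T$, whereas the paper first solves for the components $B_t,\widetilde{B}$ of $B$ and then computes $\tau=\langle\nabla_T N,B\rangle$; and you explicitly verify the third Frenet equation $\nabla_T B=-\tau N$, a check the paper leaves implicit.
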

\begin{proof} We compute
\[
\nabla_T T= \nabla_{ d_1\, \partial_t+d_2\, T_\alpha} \left ( d_1\, \partial_t+d_2\, T_\alpha\right )=d_2^2 \widetilde{\nabla}_{T_\alpha} T_{\alpha}= d_2^2 \kappa_\alpha N_\alpha
\]
from which we deduce $N=N_\alpha$, $\epsilon_2=1$ and $\kappa$ as in \eqref{eq-kappaetauproduct}. Similarly
\[
\nabla_T N=d_2 \widetilde{\nabla}_{T_\alpha} N_\alpha=-d_2\, \kappa_\alpha T_\alpha + d_2\, \tau_\alpha B_\alpha\,.
\]
Next, equalling the right-hand side to $-\epsilon_1 \kappa T + \epsilon_3 \tau B$ and considering the components tangent to $\R$ and $N^{m-1}(c)$ respectively we find:
\begin{equation*}\label{eq-1-lemma}
\begin{array}{ll}
{\rm (i)}& \epsilon_1 \kappa d_1 + \epsilon_3 \tau B_t=0 \\
{\rm (ii)}& \big (\epsilon_1 \kappa d_2-\kappa/d_2 \big )T_\alpha + d_2 \tau_\alpha B_\alpha-\epsilon_3 \tau \widetilde{B}=0
\end{array}
\end{equation*}
which imply
\begin{equation}\label{eq-2-lemma}
\begin{array}{ll}
{\rm (i)}& B_t=-\epsilon_1 \epsilon_3 (\kappa / \tau) d_1 \\
{\rm (ii)}& \widetilde{B}=(\epsilon_3 /  \tau)\Big (\big (\epsilon_1 \kappa d_2-\kappa/d_2 \big )T_\alpha + d_2\tau_\alpha B_\alpha\Big )
\end{array}
\end{equation}
We use these facts to compute the expression of $\tau$:
\begin{eqnarray}\nonumber
\tau&=& \langle \nabla_T N,B \rangle \\\nonumber
&=&\langle -d_2\, \kappa_\alpha T_\alpha + d_2\, \tau_\alpha B_\alpha,-B_t \,\partial_t + \widetilde{B} \rangle\\
&=&\langle -d_2\, \kappa_\alpha T_\alpha + d_2\, \tau_\alpha B_\alpha,\frac{\epsilon_3}{\tau} \Big (\big (\epsilon_1 \kappa d_2-\kappa/d_2 \big )T_\alpha + d_2 \tau_\alpha B_\alpha\Big )\rangle\\\nonumber
&=&\frac{\epsilon_3}{\tau} \Big (d_2^2 \tau^2_\alpha -\kappa^2(\epsilon_1-\frac{1}{d_2^2} \Big )\,.
\end{eqnarray}
Next, using $d_2^2=\epsilon_1+d_1^2$ and the assumption \eqref{eq-d-condizione} we easily deduce the expression for $\tau$ given in \eqref{eq-kappaetauproduct}.

Finally, a simple computation yields
\[
\epsilon_1 \kappa^2 +\epsilon_3 \tau^2 =\left(d_1^2+\epsilon_1 \right)\left(\kappa^2_\alpha +\tau^2_\alpha \right )\neq 0\,.
\]
Thus $\gamma$ does not verify condition \eqref{eq-r-harm-general-helices-r>=4}, as required to end the lemma.
\end{proof}
To deal with the curvature terms of the $r$-tension field $\tau_r(\gamma)$ it is useful to compute some scalar products, denoted $\langle,\rangle_g$, with respect to the metric $g$ of $N^{m-1}(c)$. 

More precisely, we make the following slight abuse of notation: writing vectors as $X=-X_t\partial_t+\widetilde{X},Y=-Y_t\partial_t+\widetilde{Y}$, we shall consider
\[
\langle X, Y\rangle_g=\langle \widetilde{X}, \widetilde{Y}\rangle_g \,.
\]We have:
\begin{equation}\label{eq3}
\begin{array}{l}
\langle T, T\rangle_g =\epsilon_1+d_1^2\\
\langle T, N \rangle_g=0\\
\langle T, B\rangle_g =\epsilon_1 \epsilon_3 d_1^2 \frac{\kappa}{\tau}
\end{array}
\end{equation}
where, for the third scalar product, we have used \eqref{eq-2-lemma}(i) and 
\[
0=\langle T, B\rangle=d_1 B_t+\langle T, B\rangle_g \,.
\]

We also point out that in our product space, with a notation as above,
\[
R(X,Y)Z=c \Big (\langle \widetilde{Y}, \widetilde{Z}\rangle_g \widetilde{X}-\langle \widetilde{X}, \widetilde{Z}\rangle_g \widetilde{Y}\Big )\,.
\]
Now, we can proceed to the computation of $\tau_r(\gamma)$ as given in \eqref{r-harmonicity-curves}. 

First we assume that $r$ is even, say $r=2s$. We use Lemma~\ref{lemma-Branding-nablaelle-pseudo} and split the sum into even and odd values for the index $\ell$. Computing in a fashion similar to the proof of Theorem~\ref{Th-r-harm-cond-3-dim-space-form} we manage to express the $2s$-tension field using the functions ${\mathcal A}_{\ell},{\mathcal B}_{\ell},{\mathcal C}_{\ell}$ introduced in Lemma~\ref{lemma-Branding-nablaelle-pseudo}. 
More precisely, we obtain:
\begin{eqnarray}
\tau_{2s}(\gamma)&=&{\mathcal C}_{2s-1} N+c \sum_{j=0}^{s-1}{\mathcal C}_{2s-j-2}\big ( \langle T, T\rangle_g {\mathcal A}_{j}+\langle T, B\rangle_g {\mathcal B}_{j}\big )N\\\nonumber
&&+c \sum_{j=1}^{s-1}{\mathcal C}_{j-1}\big ( \langle T, T\rangle_g {\mathcal A}_{2s-1-j}+\langle T, B\rangle_g {\mathcal B}_{2s-1-j} \big )N\\\nonumber
&=&\Big[-(-1)^{(2 s)} \kappa (\kappa^2 \epsilon_1+\epsilon_3 \tau^2)^{(-3+2 s)} \Big(\kappa^4+2 \kappa^2 \epsilon_1 \epsilon_3 \tau^2+\tau^4\\\nonumber
&&\,\,\,\,\,-c (\kappa^2 (-1+2 s+d_1^2 \epsilon_1)+(d_1^2+\epsilon_1) \epsilon_3 \tau^2)\Big) \Big ] N
\end{eqnarray}
where for the various simplifications we have also used \eqref{eq3}. We have proved in Lemma~\ref{lemma-gamma-elica} that in our family of curves condition \eqref{eq-r-harm-general-helices-r>=4} never holds. Therefore the condition of proper $r$-harmonicity, when $r$ is even, is equivalent to
\begin{equation}\label{r-harm-product}
\kappa^4+2 \kappa^2 \epsilon_1 \epsilon_3 \tau^2+\tau^4-c (\kappa^2 (r-1+d_1^2 \epsilon_1)+(d_1^2+\epsilon_1) \epsilon_3 \tau^2)=0\,.
\end{equation}
A similar computation shows that the same condition rules $r$-harmonicty also when $r$ is odd. Finally, we replace the explicit expressions \eqref{eq-kappaetauproduct} for $\kappa$ and $\tau$ into \eqref{r-harm-product}. Then we see that the $r$-harmonicity equation \eqref{r-harm-product} becomes equivalent to
\[
(\kappa_\alpha^2+\tau_\alpha^2)^2-c ((r-1)\kappa_\alpha^2 +\tau_\alpha^2)=0\,.
\]
But the latter is precisely the condition of $r$-harmonicity for the $3$-Frenet helix $\alpha$ in $N^{m-1}(c)$ (see \cite{MR4542687}), as required to end the proof of Theorem~\ref{Th-r-harm-eliche-product-result1}.
\end{proof}

\section{$r$-harmonic curves in Robertson-Walker space time}\label{Sec-Rob-Walker}
In this section we study $r$-harmonic curves into the Robertson-Walker Lorentzian manifold $\mathcal{RW}^{m}_1 =\left ( J \times N^{m-1}(c),g_f\right )$, where the metric $g_f$ was introduced in \eqref{eq-gf-walker}.
In particular, we shall prove Theorems~\ref{biharmonic-cond-Robertson-Walker} and \ref{Th-2-RW}.

A vector field $X$ on $\mathcal{RW}^{m}_1$ can be decomposed as
\[
X =  (-X_t,\widetilde{X}) = - X_t \partial_t + \widetilde{X} ,
\]
where $\widetilde{X}$ is a vector field tangent to $N^{m-1}(c)$ and $X_t =g_f(X,\partial_t)= \langle X, \partial_t\rangle$.

Preliminarily, we observe that the action of the Levi-Civita connection in a Robertson-Walker semi-Riemannian manifold is described in the following proposition.

\begin{proposition}[see \cite{Neill}]\label{connection-Robertson-Walker-tangent}
Let $\widetilde{X}, \widetilde{Y}$ be two vector fields tangent to $N^{m-1}(c)$ and $ \partial_t$ a unit vector field tangent to $J$. Then the Levi-Civita connection $\nabla$ of the Robertson-Walker space-time $\mathcal{RW}^{m}_1$ verifies the following properties:
\begin{itemize}
\item[(i)] $\nabla_{\partial_t} \partial_t = 0 \,;$
\item[(ii)] $\nabla_{\partial_t} \widetilde{X} = \nabla_{\widetilde{X}} \partial_t = \dfrac{f'}{f} \widetilde{X}\,;$
\item[(iii)] $\nabla_{\widetilde{X}} \widetilde{Y} = \widetilde{\nabla}_{\widetilde{X}} \widetilde{Y} + \langle \widetilde{X}, \widetilde{Y} \rangle \dfrac{f'}{f} \partial_t$, where $\widetilde{\nabla}$ is the Levi-Civita connection of $N^{m-1}(c)$ and $\langle \widetilde{X}, \widetilde{Y} \rangle=g_f(\widetilde{X},\widetilde{Y})=f^2 g(\widetilde{X},\widetilde{Y})$.
\end{itemize}
\end{proposition}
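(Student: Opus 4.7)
The plan is to recognize $\mathcal{RW}^m_1$ as a warped product $(J,-dt^2)\times_f (N^{m-1}(c),g)$ and to derive all three formulas from the Koszul identity
\[
2\langle \nabla_X Y, Z\rangle = X\langle Y,Z\rangle + Y\langle X,Z\rangle - Z\langle X,Y\rangle + \langle [X,Y],Z\rangle - \langle [X,Z],Y\rangle - \langle [Y,Z],X\rangle.
\]
The computations are streamlined by lifting fiber vector fields horizontally so that $[\partial_t,\widetilde{X}]=0$ for every $\widetilde{X}$ tangent to $N^{m-1}(c)$, and by using that $\langle \partial_t,\widetilde{X}\rangle = 0$ and $\langle \widetilde{X},\widetilde{Y}\rangle = f^2 g(\widetilde X,\widetilde Y)$.

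For (i), I would test $\nabla_{\partial_t}\partial_t$ against both $\partial_t$ and an arbitrary $\widetilde{Z}$. Since $\langle\partial_t,\partial_t\rangle=-1$ is constant and $\langle\partial_t,\widetilde{Z}\rangle=0$, every derivative term vanishes; the bracket terms vanish by the lifting convention. Hence $\nabla_{\partial_t}\partial_t=0$.

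For (ii), torsion-freeness plus $[\partial_t,\widetilde X]=0$ immediately gives $\nabla_{\partial_t}\widetilde X=\nabla_{\widetilde X}\partial_t$, so only one of them must be computed. Testing against $\partial_t$ produces zero (no $\partial_t$-component appears), while testing against $\widetilde{Z}$ leaves only $\partial_t\langle\widetilde X,\widetilde Z\rangle = 2ff' g(\widetilde X,\widetilde Z)$, which after dividing by $2$ equals $\langle (f'/f)\widetilde X,\widetilde Z\rangle$, yielding the stated formula.

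For (iii), the same Koszul calculation separates into two pieces. Taking the test field $\widetilde{Z}$ gives exactly the Koszul identity for the connection $\widetilde{\nabla}$ of $(N^{m-1}(c),g)$ rescaled by $f^2$, producing the tangential term $\widetilde\nabla_{\widetilde X}\widetilde Y$; taking the test field $\partial_t$, the only surviving contribution is $-\partial_t\langle\widetilde X,\widetilde Y\rangle = -2ff'g(\widetilde X,\widetilde Y)$, which after dividing by $2\langle\partial_t,\partial_t\rangle=-2$ gives the coefficient $(f'/f)\langle\widetilde X,\widetilde Y\rangle$ of $\partial_t$. The main thing to watch is the Lorentzian sign of $\partial_t$: solving for a $\partial_t$-component from the Koszul formula requires division by $-1$, and this sign flip is precisely what turns the naive coefficient into the one appearing in (iii). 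Once this bookkeeping is done consistently, all three identities are immediate.
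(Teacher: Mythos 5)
Your argument is correct and is exactly the standard warped-product computation (the Koszul formula applied to lifted fields, using $[\partial_t,\widetilde X]=0$ and the fact that $g(\widetilde X,\widetilde Y)$ is $t$-independent for lifts) that the paper delegates to O'Neill by citation rather than proving; the only delicate point in the Lorentzian setting, namely dividing by $\langle\partial_t,\partial_t\rangle=-1$ when extracting the $\partial_t$-component in (iii), is handled correctly. Nothing further is needed.
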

Next, following \cite{Neill} again, we recall the following properties of the Riemannian curvature tensor of $\mathcal{RW}^{m}_1$:
\begin{proposition}\label{tensor-curvature-Robertson-Walker-tangent}
Let $\widetilde{X}$, $\widetilde{Y}$ and $\widetilde{Z}$ be vector fields tangent to $N^{m-1}(c)$ and $\partial_t$ a unit vector field tangent to $J$. Then the Riemannian curvature operator $R$ of the Robertson-Walker space-time $\mathcal{RW}^{m}_1$ satisfies 
\begin{itemize}
\item[(i)] $R(\widetilde{X}, \widetilde{Y}) \widetilde{Z} = \left( \dfrac{(f')^2 + c}{f^2} \right)\big( \langle \widetilde{Y}, \widetilde{Z}\rangle \widetilde{X}  - \langle \widetilde{X}, \widetilde{Z}\rangle \widetilde{Y} \big)$;
\item[(ii)] $R(\widetilde{X}, \partial_t) \partial_t  = - \left( \dfrac{f''}{f} \right) \widetilde{X}$;
\item[(iii)] $R(\widetilde{X}, \widetilde{Y}) \partial_t = 0$;
\item[(iv)] $R(\widetilde{X}, \partial_t) \widetilde{Y} = - \left( \dfrac{f''}{f} \right) \langle \widetilde{X}, \widetilde{Y}\rangle \partial_t$.
\end{itemize}
\end{proposition}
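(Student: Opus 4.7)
The strategy is a direct computation: for each of the four combinations of horizontal ($\widetilde{X},\widetilde{Y},\widetilde{Z}$) and vertical ($\partial_t$) entries, I would substitute the connection formulas of Proposition~\ref{connection-Robertson-Walker-tangent} into the definition
\[
R(X,Y)Z=\nabla_X\nabla_Y Z-\nabla_Y\nabla_X Z-\nabla_{[X,Y]}Z
\]
and simplify, using the fact that the warping function $f$ depends only on $t$, so that $\widetilde{X}(f)=0$ for any $\widetilde{X}$ tangent to $N^{m-1}(c)$, while $\partial_t(f)=f'$. To keep notation clean I would assume that the vector fields used in the test are lifts from the factors, which is enough by tensoriality of $R$, and I would recall that $[\partial_t,\widetilde{X}]=0$ for such lifts.

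For item (ii), starting from $\nabla_{\widetilde{X}}\partial_t=(f'/f)\widetilde{X}$ and $\nabla_{\partial_t}\partial_t=0$, I would compute $\nabla_{\widetilde{X}}\nabla_{\partial_t}\partial_t=0$ and $\nabla_{\partial_t}\nabla_{\widetilde{X}}\partial_t=\partial_t(f'/f)\widetilde{X}+(f'/f)\nabla_{\partial_t}\widetilde{X}=\bigl(f''/f-(f')^2/f^2\bigr)\widetilde{X}+(f'/f)^2\widetilde{X}=(f''/f)\widetilde{X}$, and then $R(\widetilde{X},\partial_t)\partial_t=-\nabla_{\partial_t}\nabla_{\widetilde{X}}\partial_t=-(f''/f)\widetilde{X}$. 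Item (iv) is analogous: applying (iii) of Proposition~\ref{connection-Robertson-Walker-tangent} one finds $\nabla_{\widetilde{X}}\widetilde{Y}-\nabla_{\partial_t}(\cdots)$ which collapses, after using $\nabla_{\widetilde{X}}\widetilde{Y}=\widetilde{\nabla}_{\widetilde{X}}\widetilde{Y}+\langle\widetilde{X},\widetilde{Y}\rangle(f'/f)\partial_t$ and differentiating the coefficient $f'/f$ in $t$, to the stated $-(f''/f)\langle\widetilde{X},\widetilde{Y}\rangle\partial_t$. Item (iii) is the easiest: since both horizontal connections preserve horizontality up to a $\partial_t$-piece whose coefficient is $\partial_t$-independent at the level of the bracket of horizontal fields, symmetrization in $\widetilde{X},\widetilde{Y}$ kills the vertical contribution.

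The main obstacle is item (i), which I would treat last. Expanding $\nabla_{\widetilde{X}}\nabla_{\widetilde{Y}}\widetilde{Z}$ using (iii) of Proposition~\ref{connection-Robertson-Walker-tangent} produces four kinds of terms: the iterated horizontal derivative $\widetilde{\nabla}_{\widetilde{X}}\widetilde{\nabla}_{\widetilde{Y}}\widetilde{Z}$; a vertical piece $\langle\widetilde{X},\widetilde{\nabla}_{\widetilde{Y}}\widetilde{Z}\rangle(f'/f)\partial_t$; the derivative $\widetilde{X}\bigl(\langle\widetilde{Y},\widetilde{Z}\rangle(f'/f)\bigr)\partial_t$, in which one must remember that the pointwise inner product is the warped one $\langle\cdot,\cdot\rangle=f^{2}g(\cdot,\cdot)$; and finally the term $\langle\widetilde{Y},\widetilde{Z}\rangle(f'/f)\nabla_{\widetilde{X}}\partial_t=\langle\widetilde{Y},\widetilde{Z}\rangle(f'/f)^{2}\widetilde{X}$. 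Symmetrizing in $\widetilde{X},\widetilde{Y}$ and using that $[\widetilde{X},\widetilde{Y}]$ is again horizontal (being the lift of a bracket on $N^{m-1}(c)$), the purely horizontal terms assemble into $\widetilde{R}(\widetilde{X},\widetilde{Y})\widetilde{Z}$, the vertical contributions cancel, and the surviving horizontal piece is $(f'/f)^{2}\bigl(\langle\widetilde{Y},\widetilde{Z}\rangle\widetilde{X}-\langle\widetilde{X},\widetilde{Z}\rangle\widetilde{Y}\bigr)$.

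To close, I would invoke that $N^{m-1}(c)$ has constant sectional curvature $c$ for $g$, so $\widetilde{R}(\widetilde{X},\widetilde{Y})\widetilde{Z}=c\bigl(g(\widetilde{Y},\widetilde{Z})\widetilde{X}-g(\widetilde{X},\widetilde{Z})\widetilde{Y}\bigr)=(c/f^{2})\bigl(\langle\widetilde{Y},\widetilde{Z}\rangle\widetilde{X}-\langle\widetilde{X},\widetilde{Z}\rangle\widetilde{Y}\bigr)$, and summing with the $(f'/f)^{2}$ term gives the factor $\bigl((f')^{2}+c\bigr)/f^{2}$ asserted in (i). The whole proof is a mechanical warped-product computation; the only bookkeeping pitfall is to consistently use the warped inner product $\langle\cdot,\cdot\rangle=f^{2}g$ when differentiating $\langle\widetilde{Y},\widetilde{Z}\rangle$ in the horizontal directions, which is exactly what produces the clean constant sectional-curvature rescaling $c\mapsto c/f^{2}$.
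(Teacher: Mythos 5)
Your computation is correct, but note that the paper does not actually prove this proposition: it is recalled verbatim from O'Neill's book (the standard curvature formulas for a warped product $J\times_f N^{m-1}(c)$), so your blind proof supplies a self-contained verification where the paper defers to a reference. Your route --- expanding $R(X,Y)Z=\nabla_X\nabla_Y Z-\nabla_Y\nabla_X Z-\nabla_{[X,Y]}Z$ on lifts using only the connection formulas of Proposition~\ref{connection-Robertson-Walker-tangent} --- is exactly the computation hidden behind the citation, and all four items check out, including the key bookkeeping point you flag in item (i): when differentiating $\langle\widetilde Y,\widetilde Z\rangle=f^2g(\widetilde Y,\widetilde Z)$ one must use the warped inner product, which is what turns $\widetilde R=c(g(\widetilde Y,\widetilde Z)\widetilde X-g(\widetilde X,\widetilde Z)\widetilde Y)$ into the $c/f^2$ contribution and yields the factor $((f')^2+c)/f^2$. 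Two small wording cautions: in item (iii) it is \emph{anti}symmetrization in $\widetilde X,\widetilde Y$ that kills the $\partial_t$-component (the coefficient $\langle\widetilde X,\widetilde Y\rangle(f'/f)^2$ being symmetric), while the horizontal component is cancelled by the bracket term $\nabla_{[\widetilde X,\widetilde Y]}\partial_t=(f'/f)[\widetilde X,\widetilde Y]$; and in item (iv) the phrase ``differentiating the coefficient $f'/f$ in $t$'' understates what is needed --- one must differentiate the full coefficient $\langle\widetilde X,\widetilde Y\rangle(f'/f)=ff'\,g(\widetilde X,\widetilde Y)$, whose $t$-derivative is $\langle\widetilde X,\widetilde Y\rangle\bigl((f'/f)^2+f''/f\bigr)$; combined with the $(f'/f)^2\langle\widetilde X,\widetilde Y\rangle\partial_t$ term from $\nabla_{\widetilde X}\nabla_{\partial_t}\widetilde Y$ this gives precisely $-(f''/f)\langle\widetilde X,\widetilde Y\rangle\partial_t$. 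Neither point is a gap, only a matter of making the sketch airtight.
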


Our first result in this context is:

\begin{proposition}\label{tension-field-Robertson-Walker-Npartial}
Let $\gamma$ be a $2$-Frenet helix parametrized by the arc length $s$ into an $m$-dimensional Robertson-Walker space-time $\mathcal{RW}^{m}_1$ such that its normal vector field $N$ is equal to $\pm \partial_t$. Then 
$$\tau_r(\gamma) = - \kappa^{2r -3} \left( \kappa^2 + (r - 1)\frac{f''}{f} \right)N\,.
$$
\end{proposition}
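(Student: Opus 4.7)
The plan is to compute $\tau_r(\gamma)$ directly from the definition \eqref{r-harmonicity-curves} by exploiting the very rigid structure forced by the assumption $N=\pm\partial_t$.

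First I would pin down the signs and the spatial/temporal decomposition of $T$. Since $\partial_t$ is timelike in $\mathcal{RW}^{m}_1$, the assumption $N=\pm\partial_t$ forces $\epsilon_2=\langle N,N\rangle=-1$. The index is $1$, so the condition $\langle T,N\rangle=0$ together with $T$ being nonnull gives $\epsilon_1=1$, and moreover $\langle T,\partial_t\rangle=0$; hence $T$ is purely tangent to the fiber $N^{m-1}(c)$ at every point of $\gamma$.

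Next, I would specialize the 2-Frenet helix identity \eqref{formula-nablaelleTT-2-Frenet-pseudo} to these signs. With $\epsilon_1=1$, $\epsilon_2=-1$ and constant curvature $\kappa$, one obtains the very clean pattern
\begin{equation*}
\nabla^{2\ell}_T T=\kappa^{2\ell}\,T,\qquad \nabla^{2\ell+1}_T T=-\kappa^{2\ell+1}\,N,
\end{equation*}
for all $\ell\geq 0$. In particular $\nabla^{2r-1}_T T=-\kappa^{2r-1}\,N$, which produces the first contribution to $\tau_r(\gamma)$.

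The curvature sum is the main bookkeeping step. I would split the sum $\sum_{\ell=0}^{r-2}(-1)^\ell R(\nabla^{2r-3-\ell}_T T,\nabla^{\ell}_T T)T$ according to the parity of $\ell$. In each case one factor is a multiple of $T$ and the other a multiple of $N$; pulling out the scalars and using the antisymmetry of $R$ in its first two arguments, a direct check shows that \emph{every} term in the sum contributes exactly $-\kappa^{2r-3}\,R(N,T)T$, so the total is
\begin{equation*}
-(r-1)\,\kappa^{2r-3}\,R(N,T)T.
\end{equation*}

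The remaining step is to evaluate $R(N,T)T$ using Proposition~\ref{tensor-curvature-Robertson-Walker-tangent}. Since $T$ is purely spatial, part (ii) gives $R(T,\partial_t)\partial_t=-(f''/f)T$, and part (iii) gives $R(T,\widetilde X)\partial_t=0$ for any spatial $\widetilde X$. Applying the standard symmetry $\langle R(X,Y)Z,W\rangle=\langle R(Z,W)X,Y\rangle$, one checks that $\langle R(\partial_t,T)T,\widetilde X\rangle=0$ for all spatial $\widetilde X$ and $\langle R(\partial_t,T)T,\partial_t\rangle=-f''/f$, whence $R(\partial_t,T)T=(f''/f)\,\partial_t$ and therefore $R(N,T)T=(f''/f)\,N$. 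Combining everything yields
\begin{equation*}
\tau_r(\gamma)=-\kappa^{2r-1}N-(r-1)\kappa^{2r-3}\frac{f''}{f}N=-\kappa^{2r-3}\!\left(\kappa^2+(r-1)\frac{f''}{f}\right)N,
\end{equation*}
as required. The only delicate point is the parity argument which collapses the alternating sum to a single common term; the curvature evaluation is then routine but must use the symmetry $\langle R(X,Y)Z,W\rangle=\langle R(Z,W)X,Y\rangle$ to reduce $R(N,T)T$ to the already-known tensor $R(T,\partial_t)\partial_t$.
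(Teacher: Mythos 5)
Your proposal is correct and follows essentially the same route as the paper: specialize the closed-form iterated derivatives $\nabla^{2\ell}_T T=\kappa^{2\ell}T$, $\nabla^{2\ell+1}_T T=-\kappa^{2\ell+1}N$ to $\epsilon_1=1$, $\epsilon_2=-1$, observe that every term of the curvature sum collapses to $-\kappa^{2r-3}R(N,T)T$, and evaluate $R(N,T)T=(f''/f)N$. The only cosmetic difference is that you recover $R(N,T)T$ from parts (ii)--(iii) of Proposition~\ref{tensor-curvature-Robertson-Walker-tangent} via the pair symmetry of $R$, while the paper reads it off directly from part (iv).
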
 

\begin{proof}
We assume $N = \partial_t$, $\epsilon_2 = - 1$ (the case $N = -\partial_t$, $\epsilon_2 = - 1$ is analogous). This implies that $T$ must be space-like and $\epsilon_1 = 1$. Moreover, $T$ is tangent to $N^{m-1}(c)$. Thus, using Proposition~\ref{tensor-curvature-Robertson-Walker-tangent}, 
\begin{equation}\label{2-Frenet-tensor-curvature-Robertson-Walker}
R (N , T) T = -R (T , N) T = \left( \frac{f''}{f} \right) \langle T, T\rangle \partial_t = \frac{f''}{f} N \,. 
\end{equation}
Now assume $r=2$. From \eqref{r-harmonicity-curves} and using \eqref{formula-nablaelleTT-2-Frenet-pseudo}, \eqref{2-Frenet-tensor-curvature-Robertson-Walker} we obtain
\[
\tau_2(\gamma) = \nabla_T^3 T + R (\nabla_T T, T) T =  - \kappa^3 N - \kappa R(N, T) T =- \kappa \left(\kappa^2 + \frac{f''}{f} \right) N\,.
\]
In the case that $r=3$, the expression \eqref{r-harmonicity-curves} becomes:
\begin{eqnarray*}
\tau_3(\gamma) & = & \nabla^5_T T + R (\nabla^3_T T, T) T - R (\nabla^2_T T, \nabla_T T) T\\
& = &  - \, \kappa^5\,N - \kappa^3 R(N, T) T + \kappa^3 R (T, N) T= - \kappa^3 \left(\kappa^2 + 2\frac{f''}{f} \right) N
\end{eqnarray*}
Now we find the general expression of the $r$-tension field for $r \geq 4$. Assume $r=2s$ is even. Then, separating even and odd indices in \eqref{r-harmonicity-curves} and using \eqref{formula-nablaelleTT-2-Frenet-pseudo} and \eqref{2-Frenet-tensor-curvature-Robertson-Walker}:
\[
\begin{aligned}
\tau_{2s}(\gamma)   = &\nabla_T^{2(2s -1) +1} T + \sum_{j=0}^{s-1} R\left (\nabla_T^{2(2s - j -2) + 1} T ,\nabla_T^{2j} T\right )T\\
&- \sum_{j=1}^{s-1}R\left (\nabla_T^{2(2s - j - 1)} T ,\nabla_T^{2(j-1)+1} T\right) T\\
 = &- \kappa^{2(2s -1) +1}N - \sum_{j=0}^{s-1} \kappa^{2(2s - 2) + 1}R\left( N, T \right)T + \sum_{j=1}^{s-1}  \kappa^{2(2s - 2) + 1} R\left( T, N\right) T \\
=& - \kappa^{2(2s -2) + 1} \left( \kappa^2 + (2s - 1)\frac{f''}{f} \right)N\,.
\end{aligned}
\]
The case that $r$ is odd follows the same argument.
\end{proof}

\begin{proof}[Proof of Theorem~\ref{biharmonic-cond-Robertson-Walker}]

The unit tangent vector field of $\gamma$ is given by $\gamma' = T = (0, d\,\alpha')$. We observe that $T$ is tangent to $N^{m-1}(c)$ and $\epsilon_1 = 1$. From the first Frenet equation and Proposition~\ref{connection-Robertson-Walker-tangent} we deduce
\begin{equation}\label{Frenet-Equation-Ttangent}
\epsilon_2 k(s) \, N = \nabla_T T = d^2 \widetilde{\nabla}_{\alpha'} \alpha' +  \frac{f'(t_0)}{f(t_0)} \partial_t\,.
\end{equation}

Now, since $\alpha$ is a geodesic on $N^{m-1}(c)$, $\widetilde{\nabla}_{\alpha'} \alpha' = 0$. Then \eqref{Frenet-Equation-Ttangent} implies $N = \pm \partial_t$, $\epsilon_2 = -1$ and 
$$
k(s)^2 = \kappa^2 = \left( \frac{f'(t_0)}{f(t_0)} \right)^2\,.
$$
Combining again Frenet equations \eqref{Frenet-field-general-pseudo} and Proposition~\ref{connection-Robertson-Walker-tangent} we deduce:
\[
-\kappa T + \epsilon_3 \tau(s) B = \nabla_T N = -\left|\frac{f'(t_0)}{f(t_0)}\right| T\,.
\]
From this equation we obtain $\tau = 0$. This means that $\gamma$ is a $2$-Frenet helix.\\
The curve $\gamma$ is a geodesic if and only if $\kappa=0$, that is $f'(t_0)=0$. Thus, from Proposition~\ref{tension-field-Robertson-Walker-Npartial}, $\gamma$ is proper $r$-harmonic if and only if
\[
f'(t_0) \neq 0\qquad {\rm and} \qquad \kappa^2 + (r-1)\dfrac{f''(t_0)}{f(t_0)} = 0\,.
\]
From this condition it is immediate to conclude the proof.
\end{proof}

\begin{proof}[Proof of Theorem~\ref{Th-2-RW}]
The tangent vector field of $\gamma$ is given by $\gamma' = T = (0, \alpha')$. Thus $T$ is tangent to $N^{m-1}(c)$ and $\epsilon_1 = 1$. From Proposition~\ref{connection-Robertson-Walker-tangent} and the hypothesis $f'(t_0)=0$ we obtain $\nabla_T T =  \widetilde{\nabla}_{\alpha'} \alpha'$.\\
Now, let $\beta(s) = \alpha( f(t_0) s)$. Then $T_\beta = f(t_0)\,\alpha'$ and 
\[
g (T_\beta, T_\beta) = f(t_0)^2 \, g(\alpha', \alpha') = \langle T, T \rangle = 1,
\] 
that is $\beta$ is parametrized by the arc length on $N^{m-1}(c)$. Next,

\begin{equation}\label{cov-derivative-repar}
\nabla_T T =  \widetilde{\nabla}_{\alpha'} \alpha' = \dfrac{1}{f(t_0)^2} \widetilde{\nabla}_{T_\beta}T_\beta
\end{equation}
and by an induction argument
\begin{equation}\label{cov-derivative-repar-general}
\nabla^j_T T = \dfrac{1}{f(t_0)^{j+1}} \widetilde{\nabla}^j_{T_\beta}T_\beta
\end{equation}
for all $j \geq 1$. Let $\widetilde{X}$, $\widetilde{Y}$ and $\widetilde{Z}$ be vector fields tangent to $N^{m-1}(c)$. From Proposition~\ref{tensor-curvature-Robertson-Walker-tangent} and the assumption $f'(t_0)=0$ we deduce that along the curve $\gamma$ we have:
\begin{eqnarray}\label{tensor-curvature-repar}\nonumber
R(\widetilde{X}, \widetilde{Y}) \widetilde{Z}& = &\frac{c}{f(t_0)^2} \big( \langle \widetilde{Y}, \widetilde{Z}\rangle \widetilde{X} - \langle \widetilde{X}, \widetilde{Z}\rangle \widetilde{Y} \big)\\
&=& c \big( g(\widetilde{Y}, \widetilde{Z}) \widetilde{X}  - g( \widetilde{X}, \widetilde{Z}) \widetilde{Y} \big) \\\nonumber
&=& R^{N^{m-1}(c)}(\widetilde{X}, \widetilde{Y}) \widetilde{Z}\,.
\end{eqnarray}
Finally, using \eqref{cov-derivative-repar-general} and \eqref{tensor-curvature-repar} in the expression of $r$-tension field \eqref{r-harmonicity-curves},
\[
\begin{aligned}
&\tau_r(\gamma)=\nabla^{2r-1}_T T+ \sum_{\ell=0}^{r-2}(-1)^\ell R\left (\nabla^{2r-3-\ell}_T T ,\nabla^{\ell}_T T\right )T\\
&= \dfrac{1}{f(t_0)^{2r}} \widetilde{\nabla}^{2r-1}_{T_\beta}T_\beta + \sum_{\ell=0}^{r-2}(-1)^\ell R\left (\dfrac{1}{f(t_0)^{2r-2-\ell}} \widetilde{\nabla}^{2r-3-\ell}_{T_\beta}T_\beta,\dfrac{1}{f(t_0)^{\ell+1}} \widetilde{\nabla}^{\ell}_{T_\beta}T_\beta \right )T\\
&= \dfrac{1}{f(t_0)^{2r}} \widetilde{\nabla}^{2r-1}_{T_\beta}T_\beta + \dfrac{1}{f(t_0)^{2r}}\sum_{\ell=0}^{r-2}(-1)^\ell R^{N^{m-1}(c)}\left ( \widetilde{\nabla}^{2r-3-\ell}_{T_\beta}T_\beta, \widetilde{\nabla}^{\ell}_{T_\beta}T_\beta \right )T_\beta\\
& =  \dfrac{1}{f(t_0)^{2r}} \tau_r(\beta)
\end{aligned}
\]
and so the conclusion of the proof follows readily.
\end{proof}

\appendix
\section{The Fundamental Theorem of Curves}\label{appendix}

\begin{theorem}[The Fundamental Theorem of Curves in Semi-Riemannian Geometry]
Let $M=(M^m_t,\langle, \rangle)$ be an $m$-dimensional semi-Riemannian manifold of index $1 \leq t \leq m-1$. Given smooth, positive curvature functions $k_1(s),\ldots,k_{n-1}(s)$ defined on an open real neighbourhood of $s_0\in \R$, an initial point $p_0\in M$ and an initial orthonormal frame $\{T_0,F_{2,0},\ldots,F_{n,0}\}$ at $p_0$ there exists an $n$-Frenet curve $\gamma(s)$ in $M$ parametrized by the arc length $s$ such that
\begin{itemize}
\item[(i)] $\gamma(s_0)=p_0$;
\item[(ii)] The $n$-Frenet frame field $\{T,F_{2},\ldots,F_{n}\}$ of $\gamma$ coincides with 

\noindent $\{T_0,F_{2,0},\ldots,F_{n,0}\}$ at $s=s_0$;
\item[(iii)] The given functions $k_1(s),\ldots,k_{n-1}(s)$ are the curvatures of $\gamma(s)$.
\end{itemize}
\end{theorem}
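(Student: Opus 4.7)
The plan is to follow the classical Riemannian strategy but to pay special attention to the non-skew-symmetric matrix $\Omega$ of Remark~\ref{remark-not-skew}. Let $\epsilon_i=\langle F_{i,0},F_{i,0}\rangle=\pm 1$ be determined by the prescribed initial frame. We will set up a coupled first-order ODE system whose unknowns are the curve $\gamma$ and the candidate frame $\{F_1,\ldots,F_n\}$ along it, solve it via the standard existence/uniqueness theorem, and then verify, as a separate step, that the solution frame is automatically orthonormal with the correct signature.

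First, I would work in a local chart $(U,x^1,\ldots,x^m)$ around $p_0$ with Christoffel symbols $\Gamma^a_{bc}$. Writing $\gamma(s)=(x^1(s),\ldots,x^m(s))$ and $F_i(s)=F_i^a(s)\,\partial_a|_{\gamma(s)}$, the conditions $F_1=\gamma'$ together with the Frenet system \eqref{Frenet-field-general-pseudo} become a first-order ODE system for the unknowns $(x^a,F_i^a)$: namely $\dot x^a=F_1^a$ and $\dot F_i^a+\Gamma^a_{bc}(x)F_1^b F_i^c=(\Omega F)_i^a$, where $\Omega$ is the matrix displayed in Remark~\ref{remark-not-skew}. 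The smoothness of the prescribed curvature functions $k_1(s),\ldots,k_{n-1}(s)$ guarantees a smooth right-hand side, so the standard existence and uniqueness theorem for systems of ODEs yields a unique smooth solution on some open neighborhood of $s_0$, with $\gamma(s_0)=p_0$ and $F_i(s_0)=F_{i,0}$. This immediately settles (i) and the initial-frame condition in (ii).

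The main obstacle, and the only genuinely semi-Riemannian step, is to show that the solution frame $\{F_i(s)\}$ remains orthonormal with signs $\epsilon_i$. Set $G_{ij}(s)=\langle F_i(s),F_j(s)\rangle$. Differentiating along $\gamma$ and using the constructed system we obtain the linear ODE
\[
G_{ij}'=\sum_{k}\Omega_{ik}G_{kj}+\sum_{k}\Omega_{jk}G_{ik},
\]
with initial condition $G_{ij}(s_0)=\epsilon_i\delta_{ij}$. Although $\Omega$ is not skew-symmetric, one checks by direct inspection of the nonzero entries that the matrix $\Omega E$, with $E=\mathrm{diag}(\epsilon_1,\ldots,\epsilon_n)$, is skew-symmetric: indeed $\Omega_{i,i+1}\epsilon_{i+1}+\Omega_{i+1,i}\epsilon_i=\epsilon_{i+1}^2 k_i-\epsilon_i^2 k_i=0$. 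This identity is exactly what is needed to verify that the constant matrix $G^0_{ij}=\epsilon_i\delta_{ij}$ is itself a solution of the ODE above; by uniqueness of solutions of linear ODEs, $G_{ij}(s)\equiv\epsilon_i\delta_{ij}$ on the entire interval of existence, so $\{F_i\}$ is orthonormal with the prescribed signature throughout. This completes the orthonormality part of (ii) and shows that $\langle\gamma',\gamma'\rangle=\epsilon_1$, so $\gamma$ is parametrized by the arc length.

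Finally, (iii) and the remaining part of (ii) are automatic: the Frenet identities $\nabla_T F_i=-\epsilon_{i-1}k_{i-1}F_{i-1}+\epsilon_{i+1}k_i F_{i+1}$ are built into the ODE we solved, and the frame is now known to be orthonormal, so each $k_i(s)$ is literally the $i$-th curvature of $\gamma$ in the sense of Definition~\ref{def-helices-order-n}. As usual, this construction initially yields an $n$-Frenet curve in the chart $U$; since uniqueness allows us to patch solutions across chart overlaps, the argument extends to any point $p_0\in M$. The only subtle point of the whole argument, beyond the Riemannian template, is the algebraic observation that $\Omega E$ is skew-symmetric, which plays the role of the classical skew-symmetry of the Frenet matrix.
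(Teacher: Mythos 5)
Your proposal is correct and follows essentially the same route as the paper's Appendix proof: a local-coordinate first-order ODE system for the curve and the frame components, solved by the standard existence/uniqueness theorem, followed by a uniqueness argument showing that the Gram matrix $\langle F_i,F_j\rangle$ remains equal to $\mathrm{diag}(\epsilon_1,\ldots,\epsilon_n)$. The only difference is one of packaging: where the paper writes out the auxiliary system \eqref{Frenet-field-ortho-system-ODE} for all $n(n+1)/2$ products $X_{ij}$ and checks entry by entry that the constant functions solve it, you encapsulate the same verification in the single (correct) observation that $\Omega E$ is skew-symmetric, i.e., that $\Omega$ is skew-adjoint with respect to the indefinite inner product.
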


\begin{proof} 
We follow the approach of \cite{MR3198740} and \cite{Spivak}. Let $p_0\in M$ and let $( U, \varphi) = (U, x^1,  \ldots, x^m) )$ be a local chart of $M_t^m$ around the point $p_0$.
Consider in the chart $( U, \varphi)$ a given curve $\alpha(s) = (\alpha^1(s), \ldots, \alpha^m(s)) = (x^1 \circ \alpha, \ldots, x^m \circ \alpha)$ and let $\{ F_1^\alpha, \ldots, F_n^\alpha \}$ be the Frenet frame field along $\alpha$. Then, with respect to the coordinate frame field $\left\lbrace\frac{\partial}{\partial x^i} \right\rbrace_{i = 1}^m$ we have
\[
T = \alpha'(s) = \dot{\alpha}^j(s) \dfrac{\partial}{\partial x^j}
\]
and
\[
F_i^\alpha = w_i^j \dfrac{\partial}{\partial x^j},\quad i=1, \dots, n
\] 
 Note that $w_1^j =\dot{\alpha}^j$.
Moreover, 
\[
\nabla_T F_i^\alpha = \big( \dot{w}_i^j + \Gamma_{rs}^j \dot{\alpha}^r w_i^s \big)\dfrac{\partial}{\partial x^j},
\]
where the $\Gamma_{rs}^j$'s are the Christoffel symbols of the Levi-Civita connection in the local chart $( U, \varphi)$.
Using the Frenet Equations \eqref{Frenet-field-general-pseudo} we obtain that the $ w_i^j$'s satisfy the following system with $1\leq j\leq m$:
\begin{equation}\label{Frenet-field-general-pseudo-local}
\begin{cases}
\dot{\alpha}^j(s)=w_1^j(s) \\ 
 \dot{w}_1^j + \Gamma_{rs}^j \dot{\alpha}^r w_1^s= \epsilon_2 \,k_1^\alpha\,w_2^j  \\ 
\dot{w}_i^j + \Gamma_{rs}^j \dot{\alpha}^r w_i^s=-\epsilon_{i-1}\,k_{i-1}^\alpha w_{i-1}^j+\,\epsilon_{i+1}\,k_i^\alpha w_{i+1}^j \,,  \quad 1<i<n\\ 
\dot{w}_n^j + \Gamma_{rs}^j \dot{\alpha}^r w_n^s=-\epsilon_{n-1}\,k_{n-1}^\alpha w_{n-1}^j 
\end{cases} 
\end{equation}
If we put  $A(s) = (\alpha^1(s),  \dots, \alpha^m(s))$ and $W_i(s) = (w_i^1(s),  \dots, w_i^m(s)), 1 \leq i \leq n$, then \eqref{Frenet-field-general-pseudo-local} can be rewritten as follows:
\begin{equation}\label{Frenet-field-general-system-ODE}
\left \{
\begin{array}{ccl}
A'(s)&=&W_1(s)\,, \\[1 ex]
W'_i(s) &=& G_i\big(A(s), W_1(s), ..., W_n(s)\big),\quad  i=1,  \dots, n
\end{array} \right .
\end{equation}
where the $G_i$'s are functions which depend only on the curvatures $k_1^\alpha, \dots k_{n-1}^\alpha$, the values of $\epsilon_1 \dots \epsilon_n$ and the Christoffel symbols. 
By way of summary, for a given curve $\alpha(s)$ in the local chart $( U, \varphi)$, system~\ref{Frenet-field-general-pseudo-local} or, equivalently, system~\ref{Frenet-field-general-system-ODE} holds. 

Now, let us suppose to  have $(n-1)$ positive  differentiable functions 

\noindent $k_1(s), \dots, k_{n-1}(s)$ and an initial orthonormal frame $\{T_0,F_{2,0},\ldots,F_{n,0}\}$ at $p_0$ such that
\begin{equation}\label{eq-initial-orthonormal-frame}
\epsilon_1 = \langle T_0, T_0 \rangle , \qquad \epsilon_2 =  \langle F_{2,0}, F_{2,0} \rangle , \quad \dots \quad \epsilon_n = \langle F_{n,0}, F_{n,0} \rangle .
\end{equation}

We consider the system of differential equations in the variables 

\noindent $C(s), V_1(s), \dots, V_n(s)$:
\begin{equation}\label{Frenet-field-existence-system-ODE}
\left \{
\begin{array}{ccl}
C'(s)&=&V_1(s)\,, \\[1 ex]
V'_i(s) &=& G_i\big(C(s), V_1(s), ..., V_n(s)\big),\  i=1,  \dots, n
\end{array} \right .
\end{equation}
where $G_1, \dots G_n$ are formally the functions defined in \eqref{Frenet-field-general-system-ODE}, but they depend  on the given functions $k_1, \dots k_{n-1}$, $\epsilon_1 \dots \epsilon_n$ and the Christoffel symbols.
If we choose the initial conditions
\[
\big(C(s_0), V_1(s_0), V_2(s_0) \dots, V_n(s_0)\big ) =\big (\varphi(p_0), t_0, f_{2,0},\ldots,f_{n,0}\big ),
\]
where $t_0,f_{2,0},\ldots,f_{n,0} \in \mathbb{R}^m$ are the components, with respect to the coordinate frame field, 
of $T_0, F_{2,0},\ldots,F_{n,0}$ respectively, there exists a unique solution $\big(c(s),v_1(s), \dots, v_n(s)\big)$ of \eqref{Frenet-field-existence-system-ODE}, in a neighbourhood of $s_0$, such that 
\[
\big(c(s_0), v_1(s_0), v_2(s_0) \dots v_n(s_0)\big) = \big(\varphi(p_0), t_0, f_{2,0},\ldots,f_{n,0}\big).
\]
Define $\gamma(s) = \varphi^{-1} \circ c(s)$. Then $\gamma(s)$ is a curve on $M_t^m$ such that $\gamma(s_0) = p_0$.\\
Let us define the vector fields $f_i = v_i^j \dfrac{\partial}{\partial x_j}$, $i=1, \dots, n$,  along $\gamma$. Then the frame field $\{f_1, f_2,\dots, f_n\}$  coincides with $\{ T_0, F_{2,0},\ldots,F_{n,0} \}$ at the point $s_0$ and satisfy \eqref{Frenet-field-existence-system-ODE} by construction. To finish the proof, we have to show that $k_1, \dots, k_{n-1}$ are the curvatures of the curve $\gamma(s)$. For this, it is sufficient to prove that $\{f_1,\dots, f_n\}$ is an orthonormal frame field of the type \eqref{eq-initial-orthonormal-frame} along the curve $\gamma$. If the matrix $\Omega$ defined in Remark~\ref{remark-not-skew} were skew-symmetric, then the conclusion could be rapidly obtained by using the method of \cite[Theorem~6, p. 124]{MR1998826}. 

In our case,  generalizing the method of \cite{MR3198740}, we consider the following auxiliary system of ${n(n+1)}/{2}$ differential equations in the variables $X_{ij}(s), 1 \leq i \leq j \leq n$:
{\small \begin{equation}\label{Frenet-field-ortho-system-ODE}
\left \{
\begin{array}{ccll}
X_{1 \, 1}' &= &2\epsilon_2 k_1 X_{1 \,2}&\\[1 ex]
X_{i \, i}' & = & -2\epsilon_{i-1} k_{i-1} X_{i-1 \, i} + 2\epsilon_{i+1} k_i X_{i \, i+1} & 1 < i < n\\[1 ex]
X_{n \, n}' & = & -2\epsilon_{n-1} k_{n-1} X_{n-1 \,n}&\\[1 ex]
X_{1 \, i}' &= & \epsilon_2 k_1 X_{2 \, i} - \epsilon_{i-1} k_{i-1} X_{1 \, i-1} + \epsilon_{i+1} k_i X_{1 \, i+1} & 1 < i < n\\[1 ex]
X_{1 \, n}'  &=&  \epsilon_2 k_1 X_{2 \, n} - \epsilon_{n-1} k_{n-1} X_{1 \, n-1}&\\[1 ex]
X_{i \, j}'  &=&  - \epsilon_{i-1} k_{i-1} X_{i-1 \, j} + \epsilon_{i+1} k_i X_{i+1 \, j}&\\
&& - \epsilon_{j-1} k_{j-1} X_{i \, j-1} + \epsilon_{j+1} k_j X_{i \, j + 1} &1 < i < j < n\\[1 ex]
X_{i \, n}' &=&  - \epsilon_{i-1} k_{i-1} X_{i-1 \, n} + \epsilon_{i+1} k_i X_{i+1 \, n} &\\[1 ex]
&&- \epsilon_{n-1} k_{n-1} X_{i \, n - 1} & 1 < i < n,
\end{array} \right .
\end{equation}
}
with initial conditions at the point $s=s_0$
\[
X_{ii}(s_0) = \epsilon_i, \, 1 \leq i \leq n; \quad X_{ij}(s_0) = 0, \, i < j.
\]
Since $\{ f_1, \dots, f_n\}$ satisfy \eqref{Frenet-field-existence-system-ODE}, then  $Y_{ij} = \langle f_i, f_j \rangle$, $1 \leq i \leq j \leq n$, is a solution of \eqref{Frenet-field-ortho-system-ODE}. Moreover, since $\{f_1, f_2 \dots, f_n\}$  coincide with the orthonormal frame $\{ T_0, F_{2,0},\ldots,F_{n,0} \}$ at the point $s_0$, then
\[
\begin{aligned}
&Y_{1 \, 1}(s_0) = \langle T_0, T_0 \rangle = \epsilon_1\\
&Y_{i \, i}(s_0) = \langle F_{i0}, F_{i0} \rangle = \epsilon_i \quad 2 \leq i \leq n\\
&Y_{ij}(s_0) = \langle F_{i0}, F_{j0} \rangle = 0 \quad  i < j,
\end{aligned}
\]
that is the solution $\lbrace Y_{ij} \rbrace_{1 \leq i \leq j \leq n}$ satisfy the initial conditions at the point $s_0$.
On the other hand, the ${n(n+1)}/{2}$ constant functions given by
\[ 
Z_{ii} = \epsilon_i, \, 1 \leq i \leq n; \quad Z_{ij} = 0, \, i < j
\]
are also solutions of \eqref{Frenet-field-ortho-system-ODE}. 
Thus, by the uniqueness of the solution, we deduce that $Y_{ij} = Z_{ij}, \, 1 \leq i \leq j \leq n$, that is 
\[
\langle f_i, f_i \rangle = \epsilon_i, \, 1 \leq i \leq n; \quad \langle f_i, f_j \rangle = 0, \, i \neq j.
\]
This means that $\{f_1,\dots,f_n\}$ is an orthonormal frame along $\gamma$, as required to end the proof.
\end{proof}


\end{document}